\newtheorem{theorem}{Theorem}[section]
\newtheorem{lemma}[theorem]{Lemma}
\newtheorem{corollary}[theorem]{Corollary}
\newtheorem{proposition}[theorem]{Proposition}
\newtheorem{proposition-definition}[theorem]{Proposition-Definition}
\newtheorem{example-proposition}[theorem]{Example-Proposition}
\newtheorem{question}[theorem]{Question}
\theoremstyle{definition}
\newtheorem{example}[theorem]{Example}
\newtheorem{definition}[theorem]{Definition}
\newtheorem{remark}[theorem]{Remark}
\newtheorem*{ack}{Acknowledgments}
\newcommand{\bbZ}{\mathbb{Z}}
\newcommand{\gl}{\mathfrak{gl}}
\newcommand{\GL}{\mathrm{GL}}
\newcommand{\tos}{\mapsto}
\newcommand{\rmG}{\mathrm{G}}
\newcommand{\ka}{\text{K\"{a}hler}}
\newcommand{\bbK}{\mathbb{K}}
\newcommand{\Hom}{\mathrm{Hom}}
\newcommand{\Alg}{ {\rm Alg}}
\newcommand{\Tr}{\mathrm{Tr} \,}
\newcommand{\bd}{\mathbf{d}}
\newcommand{\QQ}{\mathbb{K} \overline{Q}}
\newcommand{\DQ}{\overline{Q}}
\newcommand{\ldb}{\mathopen{\{\!\!\{}}
\newcommand{\rdb}{\mathclose{\}\!\!\}}}
\newcommand{\bw}{\mathbf{w}}
\newcommand{\mH}{\mathcal{H}}
\newcommand{\Spec}{\mathrm{Spec}\,}
\newcommand{\Mod}{\mathrm{Mod}}
\newcommand{\fkg}{\mathfrak{g}}
\newcommand{\br}{\mathbf{r}}
\newcommand{\bbN}{\mathbb{N}}
\newcommand{\id}{\mathrm{id}}
\newcommand{\rmH}{\mathrm{H}}
\newcommand{\Rmnum}[1]{\expandafter\@slowromancap\romannumeral #1@}
\newcommand{\cc}{\circ}
\newcommand{\bm}{\mathbf{m}}
\newcommand{\DGA}{\mathrm{DGA}}
\newcommand{\dga}{\DGA^{\leq 0}}
\newcommand{\mC}{\mathcal{C}}
\newcommand{\dgEnd}{\underline{\rm End}}
\newcommand{\pa}{\partial}
\newcommand{\db}{\ldb-,-\rdb}
\newcommand{\bbL}{\mathbb{L}}
\newcommand{\bfL}{\mathbf{L}}
\newcommand{\Lot}{\stackrel{\bfL}{\otimes}}
\newcommand{\KA}{\Omega^1}
\newcommand{\dgHom}{\underline{\mathrm{Hom}}}
\newcommand{\CDGA}{\mathrm{CDGA}}
\newcommand{\cdga}{\CDGA^{\leq 0}}
\newcommand{\sSet}{\mathrm{sSet}}
\newcommand{\bbT}{\mathbb{T}}
\newcommand{\Sym}{\mathrm{Sym}}
\newcommand{\Pol}{\mathrm{Pol}}
\newcommand{\MC}{\mathrm{MC}}
\newcommand{\Com}{\mathrm{Com}}
\newcommand{\bfR}{\mathbf{R}}
\newcommand{\s}{\mathfrak{s}}
\newcommand{\DRep}{\mathrm{DRep}}
\newcommand{\Ch}{\mathrm{Ch}}
\newcommand{\dgDer}{\underline{\mathrm{Der}}}
\newcommand{\dgDDer}{\underline{\mathbb{D}\mathrm{er}}}
\newcommand{\si}{\s ^{-1}}
\newcommand{\cone}{\mathrm{cone}}
\newcommand{\Ts}[1]{\mathrm{T}^{\ast, [#1]}}
\newcommand{\TTs}[1]{\widehat{\mathrm{T}}^{\ast, [#1]}}
\newcommand{\one}{(-1)}
\newcommand{\pmone}{(\pm 1)}
\newcommand{\Aer}{(A^e)^{\otimes r}}
\newcommand{\sfB}{\mathsf{B}}
\newcommand{\HC}{\mathsf{HC}}
\newcommand{\Set}{\mathrm{Set}}
\newcommand{\drep}{\mathpzc{DRep}}
\newcommand{\bfB}{\mathbf{B}}
\newcommand{\bcs}{\backslash}
\newcommand{\dpois}{\mathsf{DPois}}
\newcommand{\pois}{\mathsf{Pois}}
\newcommand{\cof}{\stackrel{\simeq}{\twoheadrightarrow}}
\newcommand{\DGLA}{\mathrm{DGLA}}
\newcommand{\rHC}{\overline{\HC}}
\newcommand{\D}{\mathcal{D}}
\newcommand{\crS}{\mathscr{S}}
\newcommand{\ab}{\mathrm{ab}}
\newcommand{\adga}{\mathrm{ADGA}^{\leq 0}}
\newcommand{\bfOm}{\mathbf{\Omega}}
\newcommand{\tbm}{\widetilde{\bm}}
\newcommand{\Ho}{\mathrm{Ho}}
\newcommand{\CC}{\mathrm{CC}}
\newcommand{\e}{\epsilon}
\newcommand{\CH}{\mathrm{CH}}
\newcommand{\coker}{\mathrm{coker}}
\DeclareMathAlphabet{\mathpzc}{OT1}{pzc}{m}{it}
\title{SHIFTED DOUBLE POISSON STRUCTURES AND NONCOMMUTATIVE POISSON EXTENSIONS}
\author{Leilei Liu}
\address{School of Science, Zhejiang University of Science and Technology, Hangzhou, Zhejiang Province, 310023 P. R. China}
\email{liuleilei@zust.edu.cn}
\author{Jieheng Zeng}
\address{School of Mathematics and Statistics, Hunan Normal University, Changsha, 410000, P.R. China}
\email{zengjh662@163.com}
\author{Hu Zhao}
\address{School of Mathematics and Statistics, Hainan University, Haikou, 570228, P. R. China}
\email{zhaohumath@163.com}
\date{\today}
\begin{document}

\begin{abstract}
	We develop a theory of noncommutative Poisson extensions. For an augmented dg algebra \(A\), we show that any shifted double Poisson bracket on \(A\) induces a graded Lie algebra structure on the reduced cyclic homology.
	Under the Kontsevich--Rosenberg principle, we further prove that the noncommutative Poisson extension is compatible with noncommutative Hamiltonian reduction. 
	Moreover, we show that shifted double Poisson structures are independent of the choice of cofibrant resolutions and that they induce shifted Poisson structures on the derived moduli stack of representations.
\end{abstract}

\maketitle

\setcounter{tocdepth}{2}\tableofcontents


\section{Introduction}\label{sect:intro}

Recently, motivated by three-dimensional $\mathcal{N}=4$ mirror symmetry and geometric representation theory, Poisson geometry on quiver varieties has attracted growing interest across several areas of mathematics. 
At the same time, quiver varieties play a fundamental role in noncommutative geometry.
Following the Kontsevich--Rosenberg principle—which states that a noncommutative geometric structure is meaningful only when it induces the corresponding classical structure on representation schemes—it is natural to seek an intrinsic, noncommutative description of Poisson geometry for quiver varieties.

Let $Q$ be a finite quiver. In \cite{CBEG2007,Van2008Double}, Crawley-Boevey, Etingof, Ginzburg, and Van den Bergh developed noncommutative symplectic and Poisson geometry and showed that preprojective algebras arise from the path algebra $\QQ$ via noncommutative Hamiltonian reduction.
As an immediate consequence, the commutator quotient $\Pi Q/[ \Pi Q,\Pi Q]$ carries a Lie bracket, and the image of the trace map generates the coordinate ring of the quiver variety as a Poisson algebra.
Moreover, this Lie bracket descends to the zeroth reduced cyclic homology $\rHC_0(\Pi Q)$. This motivates the following questions:
\begin{question}\label{que: preproj}
	\begin{enumerate}
		\item Does there exist a Lie algebra structure on the full reduced cyclic homology $\rHC_\bullet(\Pi Q)$?
		\item How can such a Lie algebra structure be explained at the cochain level?
	\end{enumerate}
\end{question}
For non-Dynkin quivers, preprojective algebras are Koszul Calabi--Yau, and hence the works \cite{CEEY2017Der, CheEsh2020} provide affirmative answers to the above questions.
However, for Dynkin quivers, preprojective algebras are not Koszul Calabi--Yau; this work presents an approach that uniformly handles both cases.

Our key tool is the Feigin--Tsygan theorem as developed by Berest--Khachatryan--Ramadoss \cite{BKR2013Der}. For an algebra $A$, the reduced cyclic homology $\rHC_\bullet(A)$ can be computed as the cohomology of
\(\displaystyle{ \frac{\tilde{A}}{S+[\tilde{A},\tilde{A}]}} \),
where \(\tilde{A}\) is a cofibrant resolution of \(A\). For a finite quiver \(Q\) the preprojective algebra \(\Pi Q\) is augmented, and the cobar–bar construction \(\bfOm\bfB\Pi Q\) provides a natural cofibrant resolution of \(\Pi Q\). One of our main results answers Question \ref{que: preproj} positively.

\begin{theorem}[Theorem \ref{thm: preproj}]
	Let $Q$ be a finite quiver.
	There exists a dg Loday algebra structure on $\bfOm\bfB\Pi Q$.
	Furthermore, this dg Loday structure induces a graded Lie algebra structure on $\rHC_\bullet(\Pi Q)$.
\end{theorem}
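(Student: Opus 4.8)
The plan is to exhibit the bracket as the shadow of a shifted double Poisson structure on the cobar--bar resolution, and then to feed this into the reduced cyclic homology machinery. First I would start from the doubled path algebra $\QQ$, which carries its canonical degree-zero double Poisson bracket $\db$, determined on generators by the symplectic pairing between each arrow $a$ and its reverse $a^{*}$ and extended by the double Leibniz rule. Crawley--Boevey--Etingof--Ginzburg realize $\Pi Q$ as the noncommutative Hamiltonian reduction of $\QQ$ at the moment element $\bw = \sum_{a}[a,a^{*}]$, so by the compatibility of noncommutative Poisson extensions with Hamiltonian reduction it suffices to equip a cofibrant model of the derived reduction with the extended bracket. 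The essential subtlety is that for Dynkin $Q$ the algebra $\Pi Q$ is finite dimensional and not Calabi--Yau, so the strict reduction is ill-behaved and one must work \emph{homologically}: I would kill $\bw$ by a Koszul-type construction --- in the simplest case adjoining a single generator in homological degree $-1$ with differential $\bw$, and in general passing to the functorial resolution $\bfOm\bfB\Pi Q$ --- on which $\db$ extends to a double Poisson bracket, the added homological generators absorbing the failure of $\bw$ to be regular and yielding the sought shifted double Poisson structure on a cofibrant model of $\Pi Q$.

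With the shifted double Poisson bracket in place on $\bfOm\bfB\Pi Q$ --- moved there, if necessary, by the independence of shifted double Poisson structures from the choice of cofibrant resolution --- the single bracket $\{-,-\} := \mu \circ \db$ obtained by composing with the multiplication endows $\bfOm\bfB\Pi Q$ with a dg Loday (Leibniz) bracket; this is the first assertion. For the second, recall from the Feigin--Tsygan theorem in the Berest--Khachatryan--Ramadoss form that $\rHC_\bullet(\Pi Q) = H_\bullet\big(\tA/(S + [\tA,\tA])\big)$ with $\tA = \bfOm\bfB\Pi Q$. I would verify that the Loday bracket is a chain map and that it descends to the commutator quotient $\tA_{\natural}$, where graded antisymmetry and the graded Jacobi identity hold, and finally that it respects the subspace $S$; passing to homology then produces the graded Lie algebra on $\rHC_\bullet(\Pi Q)$. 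In fact, once the shifted double Poisson bracket is in hand, this descent is precisely the content of the general noncommutative Poisson extension theorem specialized to the augmented dg algebra $\Pi Q$.

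The hard part will be the derived reduction. I expect the main work to lie in checking that the extension of $\db$ across the adjoined homological generators genuinely defines a double Poisson bracket --- that the triple double-bracket $\ldb -,-,- \rdb$ measuring the double Jacobi identity vanishes in the dg sense, i.e.\ up to the Koszul differential. Since $\bw$ is not central, its double brackets with the generators must conspire with the differential so that no anomaly survives, and it is exactly here that the Dynkin and non-Dynkin cases are unified: the homological generators supply the room that the non-Calabi--Yau strict reduction lacks. A secondary point is to confirm that the transport along resolution-independence is compatible with the cyclic structure, so that the Loday bracket arriving on $\bfOm\bfB\Pi Q$ is the one computing the bracket on $\rHC_\bullet(\Pi Q)$; I expect this to follow from the naturality built into the extension construction rather than from a fresh calculation.
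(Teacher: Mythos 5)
Your endgame (the Feigin--Tsygan identification via Berest--Khachatryan--Ramadoss, antisymmetry and Jacobi on the commutator quotient) coincides with the paper's, but your first and central step --- producing a strict shifted double Poisson bracket on a cofibrant model of $\Pi Q$ --- is precisely what the paper states cannot be done (``there is no canonical double Poisson bracket on $\Pi Q$''), and it fails already at the level of compatibility with the differential, before any Jacobi anomaly. On your Koszul model $\QQ\langle\xi\rangle$ with $|\xi|=-1$, $\pa\xi=\bw$, axiom (4) of Definition \ref{def: dpois bracket} forces, for every arrow $a$,
\[
\pa\ldb \xi, a\rdb \;=\; \ldb \bw, a\rdb \;=\; a\otimes 1 - 1\otimes a,
\]
by the moment map property. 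But the degree $-1$ part of $\QQ\langle\xi\rangle^{\otimes 2}$ is spanned by elements with exactly one $\xi$, so the image of $\pa$ there lies in $\QQ\bw\QQ\otimes\QQ+\QQ\otimes\QQ\bw\QQ$, whose $\bw$-bearing tensor factors have path length $\geq 2$; the component $a\otimes 1$ of the right-hand side can never appear. Hence no choice of $\ldb\xi,-\rdb$ yields a dg double bracket: the double bracket does not descend strictly to any model of the derived reduction, and the added homological generators cannot ``absorb'' this. Your hedge that the double Jacobiator ``vanishes up to the Koszul differential'' lands you in the homotopy ($P_\infty$) world, which is exactly what Theorem \ref{thm: cob bar has dpoiss} and Corollary \ref{cor: dpois to lie} do \emph{not} consume --- the paper defers that homotopy statement to upcoming work. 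Likewise, Theorem \ref{thm: dpoiss not reso} can only transport a structure you have already constructed on some resolution, and its smoothness hypothesis is unavailable for Dynkin $Q$; it cannot conjure the structure for you.

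The paper's actual proof never puts a double bracket on any model of $\Pi Q$. It works one level down: only the single Loday bracket $\{-,-\}=\bm\cc\db$ descends along the noncommutative Hamiltonian reduction $\QQ\to\Pi Q$ (Remark \ref{rk: loday}, Proposition \ref{prop: CBEG}), and it is this Loday bracket that is then extended to $\bfOm\bfB\,\Pi Q\cong T_S\,\si(\s\overline{\Pi Q}\langle t\rangle)$ via the $\s$-construction and the trivial $t$-extension, following the single-bracket content of Proposition \ref{prop: lie on bar} and Theorem \ref{thm: cob bar has dpoiss}; antisymmetry is recovered only on the quotient $\bigl(T_S\,\si(\s\overline{\Pi Q}\langle t\rangle)\bigr)_\natural$, whose cohomology is $\rHC_\bullet(\Pi Q)$. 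So the repair of your argument is structural, not computational: run the reduction-then-extension at the level of the induced Loday/NC Poisson structure, where Hamiltonian reduction is available, rather than at the level of double brackets, where it is not.
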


A crucial observation in the proof is the following isomorphism of coaugmented dg coalgebras for any augmented dg algebra $A$:
\begin{equation}\label{for: BA ext}
	\bfB A \cong S \oplus \s \bar A \langle t \rangle.
\end{equation}

First, we introduce the \(\s\)-construction. Using (\ref{for: BA ext}), the cobar–bar construction is identified with the dg algebra
\((T_S\,\si(\s\bar A\langle t\rangle),\pa+b+\delta,\otimes)\).
When \(A\) is a dg double Poisson algebra, the \(\s\)-construction endows \(\s^dA = A[d]\) with a canonical double Poisson bracket; see Section \ref{sec: s-bracket} for details.

Second, we develop a theory of noncommutative Poisson extensions. Given a dg algebra \(A\), form the free product \(A\langle t\rangle=A\ast_S S[t]\).
We analyze when double Poisson brackets on \(A\) extend to \(A\langle t\rangle\), when NC Poisson structures (see Definition \ref{def: nc pois}) extend, and how these two extension problems are related.
In particular, we show that NC Poisson extensions are compatible with noncommutative Hamiltonian reduction and align with the Kontsevich--Rosenberg principle.

\begin{theorem}[Theorem \ref{thm: com cubic}]
	Let \((A\in\Alg_S,\ldb-,-\rdb,\bw)\) be a noncommutative Hamiltonian algebra.
	Let \(\Theta\in\dgDDer_S(A)\) be a dg double Poisson derivation of degree zero such that \(\Theta (\bw)\in A\otimes A\bw A + A\bw A\otimes A\).
	Then, for any \(\bd\in\bbN^I\), there exists a commutative cube of dg Lie algebras and dg Lie algebra morphisms:
	\[
	\begin{split}
		\xymatrixrowsep{0.8pc}
		\xymatrixcolsep{1.2pc}
		\xymatrix{
			A_\natural\ar[rd]|-{pr} \ar[rr]^-{\rm inclusion}  \ar[dd]|-{\Tr}&& 
			A\langle t; \theta \rangle_\natural  \ar[rd]|-{pr} \ar[dd]|-(0.3){\Tr}\\
			& (A_\bw)_\natural   \ar[dd]|-(0.7){\Tr} \ar[rr]^(0.25){\rm inclusion} 
			&& 
			A_\bw\langle t; \theta \rangle_\natural \ar[dd]|-{\Tr} \\
			A_\bd  ^{\GL_\bd (\bbK)} \ar[rd]|-{pr} \ar[rr]^(0.6){\rm inclusion} && 
			(A\langle t; \theta \rangle)_\bd ^{\GL_\bd (\bbK)}  \ar[rd]|-{pr} \\
			& (A_\bw)_\bd ^{\GL_\bd (\bbK)}    \ar[rr]^-{\rm inclusion} 
			&& (A_\bw \langle t; \theta \rangle)_\bd ^{\GL_\bd (\bbK)}.
		}
	\end{split}
	\]
\end{theorem}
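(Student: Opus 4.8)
The plan is to build the cube one layer at a time: the four top vertices are the $\natural$-level dg Lie algebras arising from double brackets, the four bottom vertices are the $\GL_\bd(\bbK)$-invariant functions on representation schemes with their induced Poisson (hence dg Lie) brackets, and the four vertical arrows are the Kontsevich--Rosenberg trace maps. The proof then splits into three tasks: equip each vertex with a dg Lie structure, show each edge is a dg Lie morphism, and check that every face commutes.

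First I would equip each vertex with its dg Lie structure. On the top face, the double bracket $\ldb-,-\rdb$ induces the Loday bracket on $A_\natural$; by the $\s$-construction of Section \ref{sec: s-bracket} the extended bracket on $A\langle t;\theta\rangle$ descends to $A\langle t;\theta\rangle_\natural$; noncommutative Hamiltonian reduction endows $A_\bw$ with a double bracket, giving $(A_\bw)_\natural$ its Lie structure, and likewise for the reduced-and-extended corner $A_\bw\langle t;\theta\rangle_\natural$. On the bottom face I invoke the Kontsevich--Rosenberg principle: a double bracket on any $B\in\Alg_S$ induces a Poisson bracket on $B_\bd$, and restriction to $\GL_\bd(\bbK)$-invariants yields a dg Lie algebra $B_\bd^{\GL_\bd(\bbK)}$, applied to $B=A,\,A\langle t;\theta\rangle,\,A_\bw,\,A_\bw\langle t;\theta\rangle$.

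Second I would check that each family of arrows consists of dg Lie morphisms. The four vertical $\Tr$ arrows are Lie morphisms by Van den Bergh's theorem \cite{Van2008Double} that $\Tr\colon B_\natural\to B_\bd^{\GL_\bd(\bbK)}$ intertwines the Loday bracket with the induced Poisson bracket, applied to each of the four algebras. The horizontal \emph{inclusion} arrows are induced by the algebra inclusion $B\hookrightarrow B\langle t;\theta\rangle$, which by construction of the extension is a morphism of double Poisson algebras, and hence induces Lie morphisms both on the $\natural$-level and, by functoriality of $\Rep_\bd(-)$, on the invariant rings. The diagonal $pr$ arrows are induced by the reduction $B\twoheadrightarrow B_\bw$, which noncommutative Hamiltonian reduction realizes as a double Poisson morphism, again inducing Lie morphisms on both levels.

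Finally I would verify that the six faces commute. The top and bottom squares both reduce to the single algebra-level statement that extension and reduction commute, namely that $(A\langle t;\theta\rangle)_\bw$ and $(A_\bw)\langle t;\theta\rangle$ are canonically identified as double Poisson algebras; this is precisely where the hypothesis $\Theta(\bw)\in A\otimes A\bw A+A\bw A\otimes A$ enters, as it guarantees that $\Theta$ descends to a derivation on the quotient $A_\bw$ and that the $t$-extension therefore passes coherently through the reduction. The four side faces commute by naturality of $\Tr$ with respect to the algebra morphisms $\iota$ and $pr$. The main obstacle I expect is exactly this top-square compatibility: one must confirm that the derivation condition on $\Theta(\bw)$ is the correct and sufficient hypothesis making $A_\bw\langle t;\theta\rangle$ well-defined with a bracket for which the projection is a double Poisson morphism. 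Once this single algebra-level identity is in place, every remaining commutativity is a formal consequence of functoriality together with the already-established fact that $\Tr$, the inclusions, and the projections are dg Lie morphisms.
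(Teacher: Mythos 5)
Your overall architecture (dg Lie structures on the eight vertices, Lie morphisms on the edges, commutativity of faces via functoriality of the trace) matches the paper's strategy, but the category in which you run the argument is wrong at the reduced corners, and this is a genuine gap. You equip $(A_\bw)_\natural$ and $A_\bw\langle t;\theta\rangle_\natural$ with their Lie structures by asserting that noncommutative Hamiltonian reduction ``endows $A_\bw$ with a double bracket'' and that $B\twoheadrightarrow B_\bw$ is a morphism of double Poisson algebras. In general $A_\bw$ carries \emph{no} double Poisson bracket --- the paper says this explicitly at the start of Section \ref{subsec: red com ext}, and it is precisely why the NC Poisson formalism of Definition \ref{def: nc pois} is needed: what survives reduction is only the NC Poisson ($\rmH_0$-Poisson) structure, via Proposition \ref{prop: CBEG}. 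This wrong premise propagates through your proposal: the bottom-face Poisson structures on $(A_\bw)_\bd^{\GL_\bd(\bbK)}$ and $(A_\bw\langle t;\theta\rangle)_\bd^{\GL_\bd(\bbK)}$ cannot come from the double-bracket formula of Proposition \ref{prop: noncom ham to ham}, and the vertical $\Tr$ arrows out of the reduced corners cannot be justified by Van den Bergh's trace theorem, which presupposes a double bracket; the paper instead invokes Proposition \ref{prop: CB H_0} (Berest--Chen--Eshmatov--Ramadoss), which makes the trace a Lie morphism for NC Poisson algebras. (A smaller misattribution: the bracket on $A\langle t;\theta\rangle_\natural$ comes from the extension theory of Section \ref{sec: nc poiss ext} --- Theorem \ref{thm: h_0 ext} and Proposition \ref{prop: dpois ext to h0 ext} --- not from the $\s$-construction of Section \ref{sec: s-bracket}.)

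The second problem is your proposed key identity, that $(A\langle t;\theta\rangle)_\bw$ and $(A_\bw)\langle t;\theta\rangle$ are ``canonically identified as double Poisson algebras.'' Beyond the double-Poisson issue above, $(A\langle t;\theta\rangle)_\bw$ is not even a noncommutative Hamiltonian reduction in the paper's sense: in the extension one has $\ldb t,\bw\rdb=\Theta(\bw)$, which under the stated hypothesis lies in $A\otimes A\bw A+A\bw A\otimes A$ but need not have the moment-map form $p\otimes e_i-e_i\otimes p$, so $\bw$ is generally not a moment map in $A\langle t;\Theta\rangle$. The paper's actual route avoids any such identification: the hypothesis $\Theta(\bw)\in A\otimes A\bw A+A\bw A\otimes A$ is used to show that $\theta=\bm\cc\Theta$ preserves the ideal $A\bw A$ and descends to an NC Poisson \emph{derivation} on $A_\bw$ (the unnamed lemma preceding Lemma \ref{lem: nc red com wt ext}); one then forms the NC Poisson extension $A_\bw\langle t;\theta\rangle$ directly and verifies on generators (Lemma \ref{lem: nc red com wt ext}) that the projection $A\langle t;\theta\rangle_\natural\to A_\bw\langle t;\theta\rangle_\natural$ is a dg Lie morphism; the six faces then commute by combining this with Propositions \ref{prop: CBEG}, \ref{prop: dpois ext to h0 ext}, \ref{prop: noncom ham to ham} and \ref{prop: CB H_0}. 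You did correctly locate where the hypothesis on $\Theta(\bw)$ must enter; to repair the proposal, replace every double-Poisson assertion about the reduced corners by its NC Poisson counterpart and substitute the descent-of-$\theta$ lemma plus the generator-level check for the claimed algebra isomorphism.
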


We also prove a more general extension result: the double Poisson bracket extends to the cobar–bar construction.

\begin{theorem}[Theorem \ref{thm: cob bar has dpoiss}, Corollary \ref{cor: dpois to lie}]
	Let \((A\in\adga_S,\pa,\db)\) be a dg double Poisson algebra of degree \(n\).
	Then the cobar-bar construction \(\bfOm \bfB A\) is a dg double Poisson algebra of degree \(n\).
	Furthermore, the reduced cyclic homology of \(A\) carries a graded Lie algebra structure of degree \(n\).
\end{theorem}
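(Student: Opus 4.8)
The plan is to construct the double bracket on $\bfOm\bfB A$ out of the one on $A$ in stages, using the identification
\[
\bfOm\bfB A \;\cong\; \bigl(T_S\,\si(\s\bar A\langle t\rangle),\,\pa+b+\delta,\,\otimes\bigr)
\]
furnished by the isomorphism \eqref{for: BA ext}, and then to transport the resulting structure to $\rHC_\bullet(A)$. First I would apply the $\s$-construction of Section \ref{sec: s-bracket}: since $A$ carries the degree-$n$ double bracket $\db$, its shift $\s\bar A$ acquires a canonical double bracket, with the requisite Koszul signs arising from commuting $\si$ and $\s$ past the tensor factors. This equips the ``$A$-part'' of the generating object $\si(\s\bar A\langle t\rangle)$ with a bracket.

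Second, I would invoke the noncommutative Poisson extension theory to pass from $\s\bar A$ to the free product $\s\bar A\langle t\rangle=\s\bar A\ast_S S[t]$. The key is to choose the governing double Poisson derivation $\Theta$ so that the bracket with the adjoined variable reproduces the cobar differential; concretely, $\ldb t,-\rdb$ must be dictated by the coproduct underlying $\bfB A$. Once the bracket is defined on the generators $\si(\s\bar A\langle t\rangle)$, it extends uniquely to the tensor algebra $T_S(-)$ by the double Leibniz rule, and the double-antisymmetry and double-Jacobi axioms reduce to checks on generators, where they follow from the corresponding identities for $\db$ on $A$ together with the compatibility built into $\Theta$.

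Third---and this is where I expect the main obstacle---I would verify that the total differential $\pa+b+\delta$ is a double Poisson derivation, i.e. commutes with $\db$. The internal part $\pa$ is harmless since $\db$ is already a dg double bracket on $A$, but the bar part $b$ and cobar part $\delta$ mix the coalgebra and algebra structures, so confirming that the cross-terms cancel on generators is the technical heart of the argument; it is precisely here that the earlier choice of $\Theta$ must be pinned down. Granting this, $\bigl(\bfOm\bfB A,\pa+b+\delta,\db\bigr)$ is a dg double Poisson algebra of degree $n$, proving the first assertion.

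For the final statement I would descend from the double bracket to the commutator quotient. Any dg double Poisson algebra $R$ of degree $n$ induces a degree-$n$ graded Lie bracket on $R_\natural=R/(S+[R,R])$ by composing $\db$ with multiplication and projecting, the double-Jacobi identity descending to the graded Jacobi identity modulo commutators. Applying this to $R=\bfOm\bfB A$ and using that $\bfOm\bfB A$ is a cofibrant resolution of $A$, the Feigin--Tsygan theorem in the form of Berest--Khachatryan--Ramadoss \cite{BKR2013Der} identifies $H_\bullet\bigl(\bfOm\bfB A/(S+[\bfOm\bfB A,\bfOm\bfB A])\bigr)$ with $\rHC_\bullet(A)$. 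Since the bracket is a chain map for $\pa+b+\delta$, it passes to cohomology and endows $\rHC_\bullet(A)$ with the asserted graded Lie algebra structure of degree $n$, giving Corollary \ref{cor: dpois to lie}.
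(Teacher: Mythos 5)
Your skeleton matches the paper's overall route --- identify $\bfOm\bfB A$ with $T_S\,\si(\s\bar A\langle t\rangle)$ via (\ref{for: BA ext}), put the shifted bracket on $\s\bar A$ by the $\s$-construction, extend to the free product $\s\bar A\langle t\rangle$, extend by the Leibniz rule to the tensor algebra, check compatibility with $\pa+b+\delta$, and descend to $\rHC_\bullet(A)$ --- but your second stage contains a genuine error that leaves the technical heart unproved. You propose to choose the double Poisson derivation $\Theta$ so that $\ldb t,-\rdb$ ``reproduces the cobar differential.'' This cannot work. First, degrees: under the identification (\ref{for: BA ext}) the adjoined variable $t$ has degree $0$, so $\ldb t,-\rdb$ has degree $-n$ (the bracket map has degree $-n$ by Definition \ref{def: dpois bracket}), whereas $\delta$ has degree $+1$; these match only when $n=-1$. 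Second, $\delta$ is not an operator on $\s\bar A\langle t\rangle$ at all: it is the derivation of the tensor algebra $T_S\,\si(\s\bar A\langle t\rangle)$ induced by the deconcatenation coproduct of $\bfB A$, raising tensor weight by one, and it is not Hamiltonian for any extension bracket; the extension framework of Section \ref{subsec: dpoiss ext} moreover fixes $\ldb t,t\rdb = t\otimes 1-1\otimes t$, which is incompatible with generating deconcatenation. The paper does the opposite of what you suggest: it extends $\db$ to $\s\bar A\langle t\rangle$ by the \emph{zero} double derivation, so that $t$ is bracket-inert, and then proves by direct sign computations that $b$ commutes with the bracket (Proposition \ref{prop: lie on bar}) and that $\delta$ commutes with the bracket on $T_S\,\si(\s\bar A\langle t\rangle)$ (the proof of Theorem \ref{thm: cob bar has dpoiss}). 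You correctly flag the compatibility of $b$ and $\delta$ with $\db$ as ``the main obstacle,'' but you defer it to a choice of $\Theta$ that does not exist; with $\Theta=0$ the compatibility is a nontrivial verification, not a formal consequence of the extension theory, and your proposal never carries it out.

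Your final paragraph, by contrast, is sound and is exactly the paper's argument: Lemma \ref{lem: dpois to dgla}, in the form of Corollary \ref{cor: cobar bar has loday}, gives the degree-$n$ Lie bracket on $(\bfOm\bfB A)_\natural$; compatibility with $\pa+b+\delta$ passes it to cohomology; and since $\bfOm\bfB A \cof A$ is a cofibrant resolution (Proposition \ref{prop: cobar bar resolution}), the Feigin--Tsygan theorem in the form of \cite{BKR2013Der} identifies the cohomology of $(\bfOm\bfB A)_\natural$ with $\rHC_\bullet(A)$, yielding Corollary \ref{cor: dpois to lie}. So the descent step stands, but the first assertion of the theorem is not established until you replace the ``$\Theta$ generates $\delta$'' mechanism with the zero extension plus the explicit commutation checks for $b$ and $\delta$.
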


The latter statement may be regarded as a noncommutative analogue of the fact that the cohomology of a smooth Poisson manifold carries a graded Lie algebra structure.

All constructions above are carried out with respect to an explicit cofibrant resolution.
In the derived setting,
one expects that noncommutative structures are independent of the choice of cofibrant resolution.
We therefore establish:

\begin{theorem}[Theorem \ref{thm: dpoiss not reso}]
	Let \(A\in \mathrm{DGA}^{\le0}_S\) and
\(n\in \mathbb Z\). Then the space of \(n\)-shifted double Poisson
structures on \(A\) is independent
of the choice of cofibrant resolution up to weak equivalence.
\end{theorem}

For completeness, Section \ref{sec: K-R} explains how shifted double Poisson structures fit naturally into the Kontsevich--Rosenberg principle.
In particular, we prove that a shifted double Poisson structure induces a shifted Poisson structure on the derived moduli stack of representations.

\begin{ack}
	We thank Xiaojun Chen and Farkhod Eshmatov for many helpful discussions.
	In particular, the third author thanks Professor Yongbin Ruan for his continuous support and valuable advice. 
	This work was supported by the National Natural Science Foundation of China (Grant No. 12401050).
\end{ack}

\subsection{Conventions and notation}
\begin{itemize}
	\item $\bbK$ denotes an algebraically closed field of characteristic zero.
	
	\item Algebras are associative algebras over a semisimple ring \(S = \bigoplus_{i\in I}\bbK e_i\), where the \(e_i\) are mutually orthogonal idempotents.
	The category of such algebras over \(S\) is denoted by \(\Alg_S\).
	
	\item We abbreviate $-\otimes_S -$ as $- \otimes -$.
	
	\item We use cohomological grading: all complexes are cochain complexes supported in non-positive degrees. For a complex \(V\), the homology is related to cohomology by \(\rmH_n(V)=\rmH^{-n}(V^\bullet)\).
	
	\item $\Com_S$ denotes the category of cochain complexes of $S$-modules.
	
	\item $\dga_S$ denotes the category of differential graded $S$-algebras supported in non-positive degrees. The notation \(\adga_S\) denotes the category of augmented dg \(S\)-algebras supported in non-positive degrees.

	\item Let \(A\) be a dg algebra. In this work, by a left (right) \(A\)-module, we mean a left (right) dg module over \(A\). The category of  left \(A\)-modules is denoted by \(\Mod(A)\).
	
	\item $\s$ denotes the shift functor. For a complex $V^\bullet \in \Com_S$, \((\s V)^i = V^{i+1}\). 
	
	\item For any $n \in \bbN$, the symmetric group $\mathcal{S}_n$ acts on $V^{\otimes n}$: for $\sigma\in\mathcal{S}_n$ and $v_1\otimes\cdots\otimes v_n\in V^{\otimes n}$,
	\[
	\sigma(v_1\otimes\cdots\otimes v_n) := v_{\sigma^{-1}(1)}\otimes v_{\sigma^{-1}(2)}\otimes\cdots\otimes v_{\sigma^{-1}(n)}.
	\]
	In particular, $\tau$ denotes the cyclic permutation:
	\[
	\tau(a_1 \otimes a_2 \otimes \cdots \otimes a_n)  = a_n \otimes a_1 \otimes \cdots \otimes a_{n-1}.
	\]
\end{itemize}

\section{Noncommutative Poisson Geometry}\label{sec: nc poisson}

In classical Poisson geometry, Poisson structures on a smooth manifold \(X\) can be described algebraically via a Lie bracket on the algebra \(C^\infty(X)\) satisfying the Leibniz rule, or geometrically via a bivector field \(\pi\in\Gamma(X,\wedge^2 TX)\) such that \(\{\pi,\pi\}=0\), where \(\{-,-\}\) denotes the Schouten--Nijenhuis bracket.
Following this philosophy, this section recalls double Poisson geometry in the sense of Van den Bergh \cite{Van2008Double} and explains that shifted double Poisson structures behave well in the derived setting.

\subsection{Double Poisson brackets}\label{subsec: dpoiss alg}

First, we recall the notion of a double Poisson bracket.

\begin{definition}[Van den Bergh]\label{def: dpois bracket}
	Let \(A \in \dga_S\).
	A \emph{dg double Poisson bracket of degree $n$ on \(A\)} is
	an $S$-bilinear map
	\[
	\ldb -,-\rdb: A \otimes A \to A \otimes A
	\]
	of degree \(-n\) satisfying:
	\begin{enumerate}
		\item \(\ldb y,x\rdb = -\one^{|\s^n x|\cdot|\s^n y|}\,\tau\ldb x, y\rdb\);
		\item For any \(x \in A\),
		\(\mH_x := \ldb x,-\rdb : A \to A \otimes A\)
		is a double derivation of degree \(|x|-n\);
		\item \(\ldb -,-\rdb\) satisfies the {\it double Jacobi identity}:
		\begin{align*}
			\ldb x, y, z \rdb & : = \ldb x,\ldb y,z\rdb\rdb_L + \one^{|\s^n z|(| x|+|y|)}(321)\,\ldb z,\ldb x,y\rdb\rdb_L\\
			& \qquad + \one^{|\s^n x|(|y|+|z|)}(123)\,\ldb y,\ldb z,x\rdb\rdb_L = 0;
		\end{align*}
		\item The bracket is compatible with the differential on \(A\):
		\[
		\partial\bigl(\ldb x,y\rdb\bigr)=\ldb \partial x,y\rdb + \one^{|\s^n x|}\,\ldb x,\partial y\rdb.
		\]
	\end{enumerate}
\end{definition}
Here, we write
\[
\ldb x , y \rdb = \ldb x , y \rdb' \otimes \ldb x , y \rdb'' = \mH_x (y)' \otimes \mH_x (y)'' ;\ 
\ldb x,\ldb y,z\rdb\rdb_L = \ldb x, \ldb y, z \rdb' \rdb \otimes \ldb y, z \rdb''.
\]
A dg algebra \(A\) is called a \emph{dg double Poisson algebra of degree \(n\)} if it is equipped with a dg double Poisson bracket of degree \(n\).
A complex \(V\in\Com_S\) is called a \emph{dg double Lie algebra of degree \(n\)} if it has a bracket \(\db\) that satisfies the axioms in Definition \ref{def: dpois bracket}, except (2).

The following examples play important roles in this work.
\begin{example}\label{eg: quiver double poisson}
	Let $Q$ be a finite quiver.
	There exists a double Poisson bracket on $\QQ$ given by:
	for any arrow $a \in Q$,
	\begin{equation*}
		\ldb a, a^{\ast} \rdb = e_{s(a)} \otimes e_{t(a)}, 
		\qquad
		\ldb a^{\ast} , a \rdb = - e_{t(a)} \otimes e_{s(a)};
	\end{equation*}
	for any $f, g\in \overline{Q}  \text{ with } f \neq g^{*}$,
	\(\ldb f, g \rdb = 0.\)
	Extending it by the Leibniz rule defines a double Poisson bracket on \(\QQ\).
\end{example}
Here \(s(-)\) and \(t(-)\) denote the source and target of an arrow, respectively.

\begin{example}
	Let \(\bbK[t]\) be the polynomial algebra generated by \(t\).
	The degree of \(t\) need not be zero; the differential on \(\bbK[t]\) is  zero.
	There is a {dg double Poisson bracket} on \(\bbK[t]\) determined by
	\[
	\ldb t, t \rdb = t \otimes 1 - 1 \otimes t.
	\]
\end{example}

Next, we recall an important property of the double Poisson bracket.
In \cite{Van2008Double}, Van den Bergh showed that a double Poisson bracket on \(A\) canonically induces a Lie algebra structure on the quotient space \(A_{cyc}=A/[A,A]\).
When \(A\) is a dg double Poisson algebra, one obtains a bracket
\(\{-,-\}: A \otimes A \to A\)
defined as the composition
\[
\xymatrix{
	A\otimes A \ar[r]^-{\ldb -,-\rdb} & A\otimes A \ar[r]^-{\bm} & A.
}
\]
Here, \(\bm\) is the multiplication on \(A\).
\begin{lemma}\label{lem: dpois to dgla}
	Let \((A, \pa, \db)\) be a dg double Poisson algebra of degree $n$.
	Then \(A_{cyc}\) is a dg Lie algebra of degree \(n\) with respect to \((\pa, \{-,-\})\).
\end{lemma}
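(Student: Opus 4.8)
The plan is to use the multiplication map $\bm\colon A\otimes A\to A$ as the bridge that transports each axiom of the double Poisson bracket (Definition~\ref{def: dpois bracket}) into the corresponding axiom of an ordinary graded Lie bracket on the commutator quotient $A_{cyc}$. Since $\{x,y\}=\bm\ldb x,y\rdb$ by definition, every structural identity for $\db$ becomes, after applying $\bm$ and reducing modulo $[A,A]$, a statement about $\{-,-\}$; the work is to verify that the three Lie axioms — graded antisymmetry, the graded Jacobi identity, and compatibility with $\pa$ — survive this passage with the correct Koszul signs.

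First I would establish well-definedness and antisymmetry. Because $\mH_x=\ldb x,-\rdb$ is a double derivation by axiom (2), composing with the outer bimodule multiplication shows that $\{x,-\}=\bm\circ\mH_x$ is a graded derivation of $A$ of degree $|x|-n$; hence $\{x,[A,A]\}\subseteq[A,A]$, so the bracket is well defined in the second slot of the quotient. Applying $\bm$ to the skew-symmetry axiom (1), together with $\bm\circ\tau=\bm^{\mathrm{op}}$, gives that $\{x,y\}+\one^{|\s^n x||\s^n y|}\{y,x\}$ equals the graded commutator $[\ldb x,y\rdb',\ldb x,y\rdb'']\in[A,A]$; combined with the derivation property this also yields $\{[A,A],y\}\subseteq[A,A]$. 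Thus $\{-,-\}$ descends to a graded-antisymmetric bracket on $A_{cyc}$.

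The crux is the Jacobi identity, and this is where I expect the main difficulty. I would apply the total multiplication $A^{\otimes 3}\to A$ to the double Jacobi identity $\ldb x,y,z\rdb=0$ of axiom (3) and reduce modulo commutators. Each nested term $\ldb x,\ldb y,z\rdb\rdb_L$ collapses, via the derivation property of $\{x,-\}$ and the cyclic invariance of $A_{cyc}$, to a nested single bracket such as $\{x,\{y,z\}\}$; the ``leftover'' Leibniz term produced by each such collapse is not visible term-by-term but is supplied by the two cyclically permuted summands. The delicate point is therefore to check that the permutations $(123)$ and $(321)$ and the prefactors $\one^{|\s^n z|(|x|+|y|)}$, $\one^{|\s^n x|(|y|+|z|)}$ conspire — after passing to $A_{cyc}$, where tensor factors may be cyclically permuted at the cost of Koszul signs — to assemble exactly the graded cyclic Jacobi sum $\{x,\{y,z\}\}+(\text{cyclic})=0$. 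Equivalently, one may first prove that $\{-,-\}$ is a graded left Loday bracket on $A$ and then observe that antisymmetry on $A_{cyc}$ upgrades the Loday identity to Jacobi; either way the sign bookkeeping in the graded setting is the real content, the degree-zero case being Van den Bergh's \cite{Van2008Double}.

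Finally, the dg compatibility is immediate. Since $\bm$ is a chain map, axiom (4) yields $\pa\{x,y\}=\{\pa x,y\}+\one^{|\s^n x|}\{x,\pa y\}$, so $\pa$ is a graded derivation of $\{-,-\}$; as $\pa$ is a derivation of the product it preserves $[A,A]$ and hence descends to $A_{cyc}$. Assembling the pieces shows that $(A_{cyc},\pa,\{-,-\})$ is a dg Lie algebra of degree $n$.
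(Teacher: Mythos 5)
Your proposal is correct and takes essentially the same route as the paper: the paper likewise proves the mixed identity $\{x,\ldb y,z\rdb\}=\ldb\{x,y\},z\rdb+\one^{|\s^n x|\cdot|\s^n y|}\ldb y,\{x,z\}\rdb$ by expanding the three terms of the double Jacobi identity with the permutations $(123)$, $(321)$, $(132)$, then applies $\bm$ to obtain the (Loday) Jacobi identity, which together with antisymmetry modulo $[A,A]$ gives the Lie structure on $A_{cyc}$ --- exactly your ``Loday first, then antisymmetrize on the quotient'' formulation. The only difference is bookkeeping: the paper carries out the Koszul-sign computation you flag as the real content, while treating well-definedness and antisymmetry as clear, which your first paragraph justifies in the same way one would expect.
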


\begin{proof}
	The anti-symmetry of \(\{-,-\}\) on \(A_{cyc}\) is clear; we only show that the Jacobi identity holds.
	First, we prove that for any \(x,y,z\in A\),
	\begin{equation}\label{for: jacobi id}
		\{x,\ldb y,z\rdb\} - \ldb\{x,y\},z\rdb - \one^{|\s^n x|\cdot|\s^n y|}\,\ldb y,\{x,z\}\rdb = 0.
	\end{equation}
	Direct computation yields:
	\begin{align*}
		\{x, \ldb y, z \rdb\}&= (\bm \otimes \id) \big(\ldb x, \ldb y, z \rdb \rdb_L \big) - (\id \otimes \bm) (132) \ldb x, \ldb z, y \rdb \rdb_L \one^{|\s^n y| |\s^n z| };\\
		\ldb \{x, y\}, z \rdb &= - (\bm \otimes \id ) (123) \big(\ldb z, \ldb x, y\rdb \rdb_L \big)\one^{|\s^n z| |\s^n y| + |\s^n x|}\\
		&\qquad + (\id \otimes \bm) \big( \ldb z, \ldb y, x \rdb \rdb_L\big) \one^{|\s^n x| |\s^n y| + |\s^n z| |\s^n y| + |\s^n x|};\\
		\ldb y, \{x, z\} \rdb &= (\id \otimes \bm) \big(\ldb y, \ldb x, z \rdb \rdb_L\big) - (\bm \otimes \id) (132) (\ldb y, \ldb z, x \rdb \rdb_L) \one^{|\s^n z| |\s^n x|}. 
	\end{align*}
	Then (\ref{for: jacobi id}) holds.
	Applying $\bm$ to (\ref{for: jacobi id}) yields the Jacobi identity for \((A_{cyc},\pa, \{-,-\})\).
\end{proof}

\begin{remark}\label{rk: loday}
	 As shown by Van den Bergh \cite{Van2008Double}, the bracket \(\{-,-\}: A \otimes A \to A\) is generally not a dg Lie bracket.
	In fact, it defines a dg Loday algebra structure on \(A\).
	Recall that a \emph{dg Loday algebra of degree \(n\)} is a complex \(V\) equipped with a bilinear map \(\{-,-\}: V \otimes V \to V\) of degree \(n\) satisfying the Jacobi identity and compatibility with the differential \(\pa\).
\end{remark}

Although the Lie bracket in Lemma \ref{lem: dpois to dgla} is induced by the double Poisson bracket, it has since developed into an independent theory, due to Crawley-Boevey \cite{Cra2011}, Berest, Chen, Eshmatov, and Ramadoss \cite{BCER2012Non}.
This notion provides a higher homological extension of the $\rmH_0$-Poisson structure in Crawley-Boevey's work \cite{Cra2011}.
Let \(A\) be a dg algebra.
The complex \(\dgDer_S(A)\) of  \(S\)-derivations on \(A\) is naturally a dg Lie algebra with respect to the commutator bracket.
Denote by \(\dgDer_S(A)^\natural\) the subcomplex of derivations with image in \(S+[A,A]\subseteq A\).
It is straightforward to verify that \(\dgDer_S(A)^\natural\) is an ideal of \(\dgDer_S(A)\).
Therefore, \(\dgDer_S(A)_{\natural}:=\dgDer_S(A)/\dgDer_S(A)^\natural\) is a dg Lie algebra.
The canonical action of \(\dgDer_S(A)\) on \(A\) induces a Lie algebra action of \(\dgDer_S(A)_\natural\) on the quotient space \(\displaystyle A_\natural:=\frac{A}{S+[A,A]}\).
We write \(\varrho:\dgDer_S(A)_\natural\to\dgEnd_S(A_\natural)\) for the corresponding dg Lie algebra homomorphism.

Following \cite{BCER2012Non},  one has the following notion.
\begin{definition}[Berest-Chen-Eshmatov-Ramadoss]\label{def: nc pois}
	Let $A \in \dga_S$.
	Let $n \in \bbZ$. 
	An {\it NC Poisson structure of degree $n$ on $  A$} is a dg Lie algebra structure of degree $n$ on $A_\natural$ such that the adjoint representation
	$\mathrm{ad}:\,  A_\natural \to \dgEnd_S( A_\natural ) $ factors through \(\varrho\).
\end{definition}
In other words, for each \(a\in A\)  representing the class \(\bar a \in A_{\natural}\),
the map \( \mathrm{ad}_{\bar a} = \{ \bar a,-\}: A_\natural \rightarrow A_\natural\) is induced by a derivation \(H_a: A \rightarrow A\).
Throughout this work, a dg algebra \(A\) equipped with an NC Poisson structure of degree \(n\) is called an \emph{NC Poisson algebra of degree \(n\)}.

\begin{example}
	If \(A\) is a dg commutative algebra, then an NC Poisson structure on \(A\) coincides with a dg Poisson bracket on \(A\).
\end{example} 

\begin{example}
	If \(A\) is an ordinary \(S\)-algebra, then an NC Poisson structure on \(A\) coincides with an \({\rmH}_0\)-Poisson structure in \cite{Cra2011}.
	Furthermore, by Lemma \ref{lem: dpois to dgla}, a dg double Poisson bracket induces an NC Poisson structure.
\end{example}
In the quiver case, one has the necklace Lie bracket.
\begin{example}\label{eg: necklace lie}
	The induced Lie bracket on \(\QQ_\natural\) is determined as follows.
For any $a\in Q$,
$\{a,a^{*}\}=1$
and  $\{a^{*},a\}=-1$;
also,
$\{f, g\}= 0$ 
for any $f, g\in \overline{Q}  \text{ with } f \neq g^{*}$.
 By the Leibniz  rule,
for cyclic paths $a_{1}a_{2}\cdots a_{k},b_{1}b_{2}\cdots b_{l}\in \QQ_\natural$
with $a_{i},b_{j}\in \overline{Q}$,
set
\begin{equation}\label{necklace bracket}
	\begin{split}
		&\{a_{1}a_{2}\cdots a_{k},b_{1}b_{2}\cdots b_{l}\}\\
		&=
		\sum_{1\leqslant i \leqslant k,\, 1\leqslant j \leqslant l}
		\{a_{i},b_{j}\}t(a_{i+1})a_{i+1}a_{i+2}\cdots a_{k}a_{1}
		\cdots a_{i-1} b_{j+1}\cdots b_{l}b_{1}\cdots b_{j-1}.
	\end{split}
\end{equation}
\end{example}

In Poisson geometry, many important Poisson spaces are obtained from Hamiltonian reduction.
In the noncommutative setting, the analogue of a Hamiltonian \(\rmG\)-space 
is a double Poisson algebra \(A\) endowed with a noncommutative moment map.
\begin{definition}[Crawley--Boevey--Etingof--Ginzburg, Van den Bergh]\label{def: nc moment map}
	Let \((A, \pa, \db)\) be a dg double Poisson algebra of degree \(n\). A \emph{noncommutative moment map} is an element
	\(\bw=\sum_{i\in I}\bw_i\in\bigoplus_{i\in I} e_i A e_i\)
	such that
	\(\ldb\bw_i,p\rdb = p\otimes e_i - e_i\otimes p\) for every \(p\in A\).
\end{definition}
In this case, \(A\) is called a \emph{noncommutative Hamiltonian algebra of degree \(n\)}.
Following the Kontsevich--Rosenberg principle, noncommutative reduction theory is developed in \cite{CBEG2007, Van2008Double, Van2008Quasi}.
\begin{definition}[Crawley-Boevey-Etingof-Ginzburg, Van den Bergh]
	Let $(A, \pa, \ldb-,-\rdb, \bw)$ be a noncommutative Hamiltonian algebra of degree $n$.
	The {\it noncommutative Hamiltonian reduction of $A$} is the quotient algebra
	\(\displaystyle{\frac{A}{A \bw A}}\), denoted by $A_\bw$.
\end{definition}
This notion is justified by the following proposition.
\begin{proposition}\label{prop: CBEG}
	Let $(A, \pa, \ldb-,-\rdb, \bw)$ be a noncommutative Hamiltonian algebra. Let $A_\bw$ be the noncommutative Hamiltonian reduction. Then there exists an NC Poisson structure on $A_\bw$; furthermore, the canonical projection $A \to A_\bw$ descends to a dg Lie algebra morphism
	\begin{equation}\label{for: nc red for lie}
		pr: A_\natural \to (A_\bw)_\natural.
	\end{equation}
\end{proposition}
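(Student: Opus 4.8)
The plan is to transport the Lie structure of Lemma \ref{lem: dpois to dgla} across the quotient $\pi\colon A\to A_\bw$ and then check the noncommutative Poisson axiom. Write $\{-,-\}=\bm\circ\db$ for the single bracket, so that $A_\natural$ is a dg Lie algebra of degree $n$ and, for each $a\in A$, the Hamiltonian map $H_a:=\bm\circ\ldb a,-\rdb\colon A\to A$ is an $S$-derivation satisfying $\{\bar a,\bar b\}=\overline{H_a(b)}$ in $A_\natural$. As $\pi$ is a surjection of dg $S$-algebras carrying $S+[A,A]$ into $S+[A_\bw,A_\bw]$, it already induces the $S$-linear chain map $pr$ of (\ref{for: nc red for lie}); what remains is to show that $pr$ respects brackets and that the bracket it transports is an NC Poisson structure.

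The crux is that every Hamiltonian derivation preserves the ideal $A\bw A$, and this is exactly where the moment map axiom enters. A direct computation from Definition \ref{def: nc moment map}, combined with the graded antisymmetry of $\db$, gives $H_a(\bw_i)=\bm\ldb a,\bw_i\rdb=\pm[a,e_i]$ with a sign independent of $i$; summing over $i\in I$ and using $\sum_{i}e_i=1$ yields
\[
H_a(\bw)=\pm\,[a,\textstyle\sum_{i}e_i]=\pm\,[a,1]=0 .
\]
Since $A\bw A$ is the two-sided ideal generated by the \emph{single} element $\bw$ (one has $\bw_i=e_i\bw e_i\in A\bw A$ and $\bw=\sum_i\bw_i$, so $\sum_i A\bw_i A=A\bw A$), the Leibniz identity $H_a(x\bw y)=H_a(x)\bw y\pm x\,H_a(\bw)\,y\pm x\bw\,H_a(y)$ together with $H_a(\bw)=0$ shows $H_a(A\bw A)\subseteq A\bw A$. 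Hence $H_a$ descends to an $S$-derivation $\bar H_a$ of the dg algebra $A_\bw=A/A\bw A$.

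With this in hand the bracket descends. Let $J\subseteq A_\natural$ be the image of $A\bw A$; since $(A_\bw)_\natural=A/(A\bw A+S+[A,A])$, one has $J=\ker(pr)$. For $w\in A\bw A$ we get $\{\bar a,\bar w\}=\overline{H_a(w)}\in J$, so $\{\bar a,J\}\subseteq J$, and by antisymmetry $J$ is a Lie ideal of $A_\natural$. Therefore $\db$ descends to a dg Lie bracket of degree $n$ on $A_\natural/J=(A_\bw)_\natural$, and by construction $pr$ becomes a surjective morphism of dg Lie algebras, which is (\ref{for: nc red for lie}). For the NC Poisson axiom, the descended adjoint action $\{\bar a,-\}$ on $(A_\bw)_\natural$ is induced by the derivation $\bar H_a$ of $A_\bw$ produced above; thus $\mathrm{ad}$ factors through $\varrho_{A_\bw}$ as required by Definition \ref{def: nc pois}, and $A_\bw$ acquires an NC Poisson structure of degree $n$.

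The main obstacle is the ideal-preservation step: the term-by-term Leibniz expansion of $H_a$ on an individual generator $x\bw_i y$ produces $x[a,e_i]y$, which lies in neither $[A,A]$ nor $A\bw A$ on its own, so naively $H_a$ appears not to preserve the ideal. The resolution is the observation that $A\bw A$ is singly generated by $\bw$, for which the Hamiltonian vanishes by the moment map condition and $\sum_i e_i=1$. Two routine verifications should be recorded alongside: that $\pa(A\bw A)\subseteq A\bw A$ (so that $A_\bw$ is a dg algebra and $\bar H_a$ is a chain derivation), and that $\{a,S\}=0$ (so the brackets are well defined modulo $S$); both follow directly from the axioms in Definitions \ref{def: dpois bracket} and \ref{def: nc moment map}.
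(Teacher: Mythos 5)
Your proof is correct and is, in substance, exactly the argument the paper imports by citation: the paper's entire proof is the one-line remark that \cite[Theorem 7.2.3]{CBEG2007} carries over to the dg setting, and your writeup is that classical argument carried over, with the decisive step the same one — $H_a(\bw_i)=\pm[a,e_i]$ with sign independent of $i$, hence $H_a(\bw)=\pm[a,\sum_i e_i]=0$, hence $H_a(A\bw A)\subseteq A\bw A$ by the Leibniz rule, hence the image $J$ of $A\bw A$ in $A_\natural$ is a Lie ideal and the Hamiltonian derivations $\bar H_a$ witness the NC Poisson axiom of Definition \ref{def: nc pois} downstairs. Your observation that $A\bw A=\sum_i A\bw_i A$ (via $\bw_i=e_i\bw e_i$) is the right way to make the term-by-term Leibniz objection evaporate.

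One inaccuracy in your closing remarks: the claim that $\pa(A\bw A)\subseteq A\bw A$ ``follows directly from the axioms'' is not correct as stated. Definition \ref{def: nc moment map} imposes no closedness condition on $\bw$, and applying axiom (4) of Definition \ref{def: dpois bracket} to the identity $\ldb\bw_i,p\rdb=p\otimes e_i-e_i\otimes p$ (noting $|\s^n\bw_i|=0$ and $\pa e_i=0$) yields only $\ldb\pa\bw_i,p\rdb=0$ for all $p$, i.e.\ that $\pa\bw_i$ is a Casimir-type element; this does not force $\pa\bw\in A\bw A$. One must additionally assume $\pa\bw\in A\bw A$ (in practice $\pa\bw=0$, as in the quiver examples where $\pa=0$) for $A_\bw$ to be a dg algebra in the first place. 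Since the paper leaves the same assumption implicit in its definition of the noncommutative Hamiltonian reduction, this is a point to flag rather than a gap in your proof; your other routine check, $\mH_a(S)=0$, does follow directly from $S$-linearity of double derivations.
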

\begin{proof}
	The proof of \cite[Theorem 7.2.3]{CBEG2007} carries over to the dg setting.
\end{proof}

\begin{example}\label{eg: A_3}
	According to \cite{CBEG2007, Van2008Double},
	for the double Poisson algebra $\QQ$ in Example \ref{eg: quiver double poisson},
	\(\bw=\sum_{a\in Q}(a a^\ast - a^\ast a)\)
	is a noncommutative moment map.
	Consider the quiver \(Q\) in Figure \ref{A3}.
	Doubling \(Q\) yields \(\overline{Q}\) as shown in Figure \ref{DA3}.
	\begin{figure}[H]
		\centering
		\subfigure[$Q$]{
			\begin{tikzpicture}[
				v/.style={draw, circle, inner sep=1pt, fill=white, font=\small},
				arr/.style={-Stealth, thick}
				]
				\node[v] (inf) at (0,2) {$e_{\infty}$};
				\node[v] (0)   at (0,0) {$e_{0}$};
				\node[v] (1)   at (-1, -2) {$e_{1}$};
				\node[v] (2)   at (1, -2) {$e_{2}$};
				
				\draw[arr] (inf) -- node[midway, left] {$p$} (0);  
				\draw[arr] (0) -- node[midway, above, yshift=-7pt, xshift=-10] {$a_0$} (1);  
				\draw[arr] (1) -- node[midway, below] {$a_1$} (2);  
				\draw[arr] (2) -- node[midway, right]{$a_2$} (0);  
			\end{tikzpicture}
			\label{A3}}
		\qquad\qquad\qquad
		\subfigure[$\overline Q$]{
			\begin{tikzpicture}[
				v/.style={draw, circle, inner sep=1pt, fill=white, font=\small},
				arr/.style={-Stealth, thick}
				]
				\node[v] (inf) at (0,2) {$e_{\infty}$};
				\node[v] (0)   at (0,0) {$e_{0}$};
				\node[v] (1)   at (-1, -2) {$e_{1}$};
				\node[v] (2)   at (1, -2) {$e_{2}$};
				
				\draw[arr, bend left =20]  (inf) to node[midway, right] {$p$} (0);  
				\draw[arr, bend left = 20] (0) to node[midway, above, xshift = -8, yshift=-7] {$a_0$} (1);
				\draw[arr, bend left = 20] (1) to node[midway, below, yshift= 1.5] {$a_1$} (2);
				\draw[arr, bend left = 20] (2) to node[midway, right] {$a_2$} (0);
				
				\draw[arr, bend left=20] (0) to node[midway, left] {$p^*$} (inf);
				\draw[arr, bend left=20] (1) to node[midway, above, xshift = -9, yshift= -10] {$a_0^*$} (0);
				\draw[arr, bend left=20] (2) to node[midway, below, yshift=1] {$a_1^*$} (1);
				\draw[arr, bend left=20] (0) to node[midway, right] {$a_2^*$} (2);
				
			\end{tikzpicture}
			\label{DA3}}
		\caption{Quiver $Q$ and the doubled quiver $\overline{Q}$.}
	\end{figure}
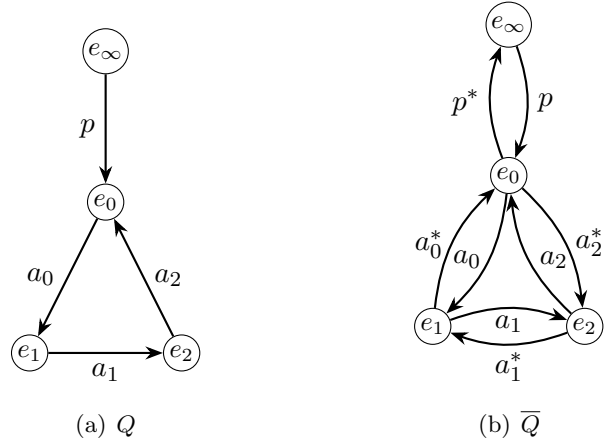
	Then the noncommutative moment map is
	\begin{equation*}\label{w for A_3}
		\begin{split}
			\bw & = \sum_{i=0}^{2}[a_i, a_i^\ast] + p p^\ast - p^\ast p\\
			& =  \big( - a^\ast _0 a_0 +  a_2 a_2 ^{\ast}  + p p^\ast \big)
			+ \big(a_0 a^\ast _0 - a_1^\ast a_1 \big)
			+ \big( a_1a_1^\ast -a_2 ^\ast a_2\big)
			+ \big(  - p^\ast p\big)\\
			& = \bw_0 + \bw_1 + \bw_2 + \bw_\infty.
		\end{split}
	\end{equation*}
\end{example}

\subsection{Higher noncommutative Poisson structures}\label{subsec: higher dpoiss}

Double Poisson brackets appear more fundamental than NC Poisson structures. In the derived setting, a natural question is whether double Poisson brackets are stable under quasi-isomorphisms. The key observation is that, under smoothness assumptions, double Poisson brackets can be constructed by solving a Maurer–Cartan equation.

We first recall the construction of noncommutative cotangent bundles in \cite{CBEG2007, Van2008Double} and the correspondence between double Poisson brackets and elements in these bundles. In algebraic geometry, polyvector fields on a smooth variety are identified with functions on the cotangent bundle. Following this intuition, a noncommutative cotangent bundle is defined via noncommutative vector fields. According to Crawley-Boevey-Etingof-Ginzburg and Van den Bergh, noncommutative vector fields on an algebra are double derivations.

Let \(A \in \dga_S\). The \emph{noncommutative differential} \(\KA(A)\) of \(A\) is defined as the kernel of the multiplication map \(\bm: A \otimes A \to A\) in the category \(\Mod(A^e)\) of left $A^e$-modules.
\(A\) is \emph{smooth} if \(\KA(A)\) is projective and finitely presented as a left $A^e$-module. The complex \(\dgDDer_S(A) := \dgDer_S(A, A \otimes A)\) of double derivations is isomorphic to the bimodule dual of \(\KA(A)\), namely \(\dgDDer_S(A) \cong \KA(A)^{\vee} = \dgHom_{A^e} (\KA(A), A \otimes A)\) as right $A^e$-modules. The right $A^e$-module structure on \(\dgDDer_S\) is induced from the inner $A$-bimodule structure on \(A \otimes A\).

The noncommutative cotangent bundle is defined as follows.
\begin{definition}[Crawley-Boevey-Etingof-Ginzburg, Van den Bergh]
	Let \(A \in \dga_S\) and \(n \in \bbZ\). The \emph{noncommutative \(n\)-shifted cotangent bundle of \(  A\)} is defined by
	\[
	\Ts{n}_{  A} : = T^\bullet_A\Bigl(\dgDDer_S(A)[-n]\Bigr).
	\]
\end{definition}
Note that \(\Ts{n}_{  A}\) is a dg algebra with respect to the tensor product, and it carries a natural weight grading:
\begin{equation}\label{for: weight grading}
	\Ts{n}_{  A} = \bigoplus_{p\geq 0} \Bigl(\Ts{n}_{  A}\Bigr)^{(p)},\ 
\text{where}\ \Bigl(\Ts{n}_{  A}\Bigr)^{(p)} = \Bigl(\dgDDer_S(A)[-n]\Bigr)^{\otimes_A p}.
\end{equation}
It is straightforward to verify that the weight grading (\ref{for: weight grading}) descends to the quotient space \(\Ts{n}_{  A,\,cyc}: = \Ts{n}_{  A}/[\Ts{n}_{  A}, \Ts{n}_{  A}]\).
Denote the completion by
\(\TTs{n} _{  A} : = \prod_{i \geq 0} \Big(\dgDDer_S(A)[-n]\Big)^{\otimes_A i}.\)
There is a natural weight filtration on \(\TTs{n}_{  A}\): for any \(p \in \bbZ_{\geq 0}\),
\begin{equation}\label{for: wei fil on nc cot}
	F^p\TTs{n}_{  A} : = \prod_{i\geq 0} \Bigl(\dgDDer_S(A)[-n]\Bigr)^{\otimes_A (p+i)}.
\end{equation}
In some literature, for example \cite{Pri2020Shi}, the (completed) noncommutative cotangent bundle is called shifted noncommutative multiderivations.

In differential geometry, there is a canonical Poisson structure on a cotangent bundle over a smooth manifold, and this Poisson structure plays a central role in Poisson geometry. 
Parallel to the noncommutative symplectic approach in \cite{CBEG2007}, Van den Bergh studied the double Poisson geometry on the noncommutative cotangent bundle in \cite{Van2008Double,Van2008Quasi}.
A crucial property is as follows.
\begin{proposition}\label{prop: double schouten}\cite[Proposition 3.2.2]{Van2008Double}
	Let $A \in \dga_S$. Then for any integer $n$, the $n$-shifted noncommutative cotangent bundle $\Ts{n}_{  A}$ is a dg double Poisson algebra of degree $n$.
\end{proposition}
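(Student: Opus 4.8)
The plan is to transport Van den Bergh's \emph{double Schouten--Nijenhuis bracket} \cite{Van2008Double} to the $n$-shifted dg setting and to verify the four axioms of Definition \ref{def: dpois bracket}. Since $\Ts{n}_{A} = T^\bullet_A(\dgDDer_S(A)[-n])$ is generated as a dg $S$-algebra by $A$, sitting in weight $0$, and by the shifted double derivations $\dgDDer_S(A)[-n]$, sitting in weight $1$, the bracket is completely determined once it is prescribed on these generators, provided it is extended so as to be a double derivation in each slot (axiom (2)). The governing principle is that the double bracket lowers weight by one, so its values on the weight-$1$ generators again lie in low weight and can be written down explicitly.

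On generators I would set $\ldb a, b\rdb = 0$ for $a, b\in A$; for a double derivation $\delta$ and $a\in A$ I would set $\ldb \delta, a\rdb = \delta(a)\in A\otimes A$, and define $\ldb a,\delta\rdb$ by imposing the antisymmetry of axiom (1); finally, for two double derivations $\delta,\Delta$, the bracket $\ldb\delta,\Delta\rdb$ is the double analogue of the Gerstenhaber commutator, assembled from the two ways of inserting one double derivation into the other. By the weight count it lands in $A\otimes\dgDDer_S(A)[-n]\,\oplus\,\dgDDer_S(A)[-n]\otimes A$. The shift $[-n]$ enters only through Koszul signs, so the formulae are exactly those of \cite{Van2008Double} decorated according to the grading conventions fixed in Definition \ref{def: dpois bracket}.

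Axioms (1) and (2) then hold essentially by construction: antisymmetry is imposed on generators and survives the Leibniz extension after a short sign bookkeeping, while the double derivation property is precisely how the bracket is propagated. For the double Jacobi identity (3) the decisive observation is that, once (1) and (2) are granted, the triple bracket $\ldb x,y,z\rdb$ appearing in that axiom is a triple derivation in $x$, $y$ and $z$; hence it suffices to verify that it vanishes when $x,y,z$ run over the generators $A$ and $\dgDDer_S(A)[-n]$. This reduces the problem to a finite list of cases. Those involving two or more elements of $A$ vanish at once from $\ldb A,A\rdb = 0$, and the case of two derivations and one function reduces to the definition of $\ldb\delta,a\rdb$; \emph{the genuinely hard case is three double derivations}, where the identity becomes a Jacobi-type identity for the double Gerstenhaber bracket on $\dgDDer_S(A)$ and where the weight drops together with the Koszul signs from the degree-$n$ shift must cancel precisely. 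I expect this to be the main obstacle. Compatibility with the differential (axiom (4)) is comparatively routine: the differential on $\Ts{n}_{A}$ is the derivation induced by those on $A$ and $\dgDDer_S(A)$, so it is compatible with the bracket on generators and this compatibility spreads to the whole algebra by the Leibniz rule.
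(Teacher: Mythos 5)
Your proposal is correct and coincides with the proof the paper relies on: the paper does not reprove this statement but cites Van den Bergh's Proposition 3.2.2, whose argument is exactly your generator-and-Leibniz scheme --- zero bracket on $A\otimes A$, evaluation $\ldb \delta, a\rdb = \delta(a)$, the double Gerstenhaber commutator on two double derivations, and reduction of the double Jacobi identity to generators via the triple-derivation property --- with the shift $[-n]$ entering only through Koszul signs, just as the paper's generator formulas (\ref{for: dpoiss on nc cot}) record. The one case you flag as the main obstacle (three double derivations) is indeed the crux and is carried out in the cited source, so nothing in your outline needs to change.
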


In Van den Bergh's work \cite{Van2008Double}, this bracket is called the {\it double Schouten--Nijenhuis bracket}.
If \(A\) is assumed to be smooth, then
the double Poisson bracket on \(\Ts{n}_{  A}\) can be described explicitly.
Assume that as a left \(A^e\)-module,
\(\KA(  A)\) is generated by \(\{\delta a_i  = a_i \otimes 1 - 1 \otimes a_i\ \big\vert\ i =1, \dots ,r\}\).
Then the right \(A^e\)-module \(\dgDDer_S(A)\) is generated by \(\{\D_{a_i}\}\),
where
\[
\D_{a_i} (\delta a_j) =\delta_{i,j} e_{t(a_i)} \otimes e_{s(a_i)}.
\]
Then, by the Leibniz rule, the double Schouten–Nijenhuis bracket is determined by evaluation on generators \(\{a_1,\dots,a_r,\s^{-n}\D_{a_1},\dots,\s^{-n}\D_{a_r}\}\):
\begin{equation}\label{for: dpoiss on nc cot}
	\ldb a_i, a_j \rdb = 0,\qquad
	\ldb \s^{-n}\D_{a_i}, \s^{-n}\D_{a_j} \rdb = 0,\qquad
	\ldb \s^{-n}\D_{a_i}, a_j \rdb = \delta_{i,j} e_{t(a_i)} \otimes e_{s(a_i)}.
\end{equation}

A key observation in Van den Bergh's work \cite{Van2008Double} is that elements
in noncommutative cotangent bundles correspond to brackets on \(A\).
Related higher and homotopy-invariant versions of this viewpoint appear
in Pridham \cite[Section 2]{Pri2020Shi} and Yeung \cite[Section 2]{Yeu2022Pre}.
\begin{proposition}\label{prop: mc for dpois}
	Let $A \in \dga_S$ be smooth. Then for any positive integer $r$, there is a bijection of sets
	\begin{equation}\label{eq: nc cot to map}
		\Bigl(\Ts{n+1}_{  A,\,cyc} [n+1]\Bigr)^{(r)} \cong \dgHom_{(A^e)^{\otimes r}}\Bigl((\KA(  A)[1+n])^{\otimes r}, A^{\otimes r}\Bigr)^{\bbZ_r} [n+1].
	\end{equation}
	Furthermore, for
	\(\displaystyle{P \in \Bigl(\Ts{n+1}_{  A,\,cyc} [n+1]\Bigr)^{(2)}}\) a cocycle of cohomological degree $1$,
	the associated bracket $\db_P$ by (\ref{for: ass bracket})  is a dg double Poisson bracket if and only if 
	\(\frac{1}{2}\{P, P\} = 0.\)
\end{proposition}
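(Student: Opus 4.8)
The plan is to treat the two assertions separately, first establishing the linear isomorphism and then identifying its Maurer--Cartan locus with the double Poisson condition. For the bijection, the starting point is smoothness: since $\KA(A)$ is finitely generated projective as a left $A^e$-module, the identification $\dgDDer_S(A)\cong\KA(A)^\vee$ recalled before the statement lets me dualize each tensor factor. First I would unwind the weight grading (\ref{for: weight grading}), so that the weight-$r$ component of $\Ts{n+1}_A$ is $(\dgDDer_S(A)[-n-1])^{\otimes_A r}$. Applying the tensor--hom duality for finitely generated projective $A^e$-modules factor by factor converts this into $\dgHom_{(A^e)^{\otimes r}}((\KA(A)[1+n])^{\otimes r}, A^{\otimes r})$, where the per-factor shift $[-n-1]$ on the derivation side passes to the shift $[1+n]$ on each copy of $\KA(A)$ and the remaining overall $[n+1]$ is pure bookkeeping. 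Finally, passing to the cyclic quotient $\Ts{n+1}_{A,cyc}$ amounts, in weight $r$, to taking coinvariants for the cyclic group $\bbZ_r$; since $\bbK$ has characteristic zero these coinvariants coincide with the $\bbZ_r$-invariants on the right-hand side, which yields the claimed bijection for every $r$.

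For the second assertion I would first make the dictionary $P\leftrightarrow\db_P$ of (\ref{for: ass bracket}) explicit at $r=2$: writing $P$ in terms of the generators $\{\s^{-n-1}\D_{a_i}\}$ and contracting against differentials via the relations (\ref{for: dpoiss on nc cot}) recovers an $S$-bilinear map $\db_P:A\otimes A\to A\otimes A$. With this description, three of the four axioms of Definition \ref{def: dpois bracket} are automatic. Axiom (2), that each $\mH_x=\ldb x,-\rdb$ is a double derivation, holds because $P$ lies in weight two of the tensor algebra on double derivations, so each factor differentiates by the Leibniz rule; axiom (1), graded antisymmetry, is precisely the $\bbZ_2$-invariance built into the cyclic quotient, with the Koszul sign produced by the shift; and axiom (4), compatibility with $\pa$, is equivalent to $P$ being a cocycle for the internal differential, which is part of the hypothesis. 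Thus the only substantive condition left is the double Jacobi identity, axiom (3).

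The heart of the proof is therefore to show that the double Jacobi expression $\ldb x,y,z\rdb$ vanishes identically exactly when $\frac12\{P,P\}=0$, where $\{-,-\}$ is the Lie bracket induced on $\Ts{n+1}_{A,cyc}$ by the double Schouten--Nijenhuis bracket of Proposition \ref{prop: double schouten} through Lemma \ref{lem: dpois to dgla}. Since the Schouten bracket lowers total weight by one, $\{P,P\}$ is a weight-three cyclic element, and I would compute it on triples of generators using the evaluation rules (\ref{for: dpoiss on nc cot}) together with the Leibniz rule. Matching the output against the $r=3$ instance of the bijection from the first part identifies $\frac12\{P,P\}$ with the trilinear map $(x,y,z)\mapsto\ldb x,y,z\rdb$, up to the universal factor and Koszul signs; concretely, the three ways the two copies of $P$ can be contracted reproduce the three cyclic summands in the definition of $\ldb x,y,z\rdb$. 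Consequently $\frac12\{P,P\}=0$ if and only if the double Jacobi identity holds, which completes the equivalence.

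The main obstacle I anticipate is the sign bookkeeping in the shifted, dg setting. Upgrading the ungraded computation of Van den Bergh requires tracking Koszul signs coherently in three places at once---the duality $\dgDDer_S(A)\cong\KA(A)^\vee$ with its shifts, the cyclic symmetrization defining the $\bbZ_r$-invariants, and the double Schouten bracket itself---and then verifying that the signs appearing in the three cyclic terms of $\{P,P\}$ match exactly those in axiom (3) of Definition \ref{def: dpois bracket}. Once the conventions are fixed consistently, for instance by treating $\s^{-n-1}\D_{a_i}$ throughout as the odd generators, the remaining combinatorics reduces to Van den Bergh's and the stated equivalence follows.
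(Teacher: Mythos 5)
Your proposal is correct and takes essentially the same route as the paper: the bijection comes from the smoothness-induced duality $\dgDDer_S(A)\cong\KA(A)^{\vee}$ (the map $\Phi$ of (\ref{for: Phi})) followed by characteristic-zero averaging to identify the $\bbZ_r$-coinvariants of the cyclic quotient with invariants, exactly the paper's $\Upsilon$ (your only elision is that producing $A^{\otimes r}$ from $A^{\otimes (r+1)}$ uses the commutator quotient $A\otimes_{A^e}-$, which is part of passing to $\Ts{n+1}_{A,\,cyc}$ rather than of the hom-duality itself). For the second assertion the paper simply defers to \cite[Theorem 4.2.3]{Van2008Double}, and your sketch---identifying $\tfrac{1}{2}\{P,P\}$ in weight three with the double Jacobiator, while the cocycle condition and the built-in $\bbZ_2$-invariance account for axioms (4) and (1)---is precisely that cited computation.
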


The proof of \cite[Proposition 4.1.1]{Van2008Double} adapts to the dg setting; for completeness, we include a sketch.
Before that, we recall the necessary constructions.
For any left $A^e$-module $M$ and for any positive integer $r$,
$M^{\otimes r +1}$ is canonically a left $\Aer$-module and a left \(A^e\)-module.
In particular,  the left   $\Aer$-module  and the left $A^e$-module structure on $A^{\otimes (r+1)}$ are as follows.
Suppose that $a = a' \otimes a'' \in A^e$, and $ x = x_1 \otimes \cdots \otimes  x_{r+1} \in A^{\otimes (r+1)}$,
the ${\rm i^{th}}$-copy of \(\Aer\) acts on $A^{\otimes r+1}$ by
\[
(a' \otimes a'') \bullet_i (x_1 \otimes \cdots \otimes x_{r+1}) = \one^{|a'| |a''| + |a| |x_{< i+1}|} x_1 \otimes\cdots \otimes x_{i} a'' \otimes a' x_{i+1} \otimes \cdots \otimes x_{r+1};
\]
the canonical left $A^e$-module structure on $A^{\otimes (r+1)}$ is given by
\[
(a' \otimes a'') . (x_1 \otimes \cdots \otimes x_{r+1})  = \one^{|a''| |x|}(a 'x_1 )\otimes \cdots \otimes (x_{r+1} a'').
\] 
Here, \(|x_{< i+1}| = |x_1| + \cdots + |x_i|\).
These two module structures clearly commute.
Furthermore, there exists  a canonical morphism of left  $A^e$–modules
\begin{equation}\label{for: Phi}
	\Phi : M^{\otimes_A r} \longrightarrow \dgHom_{\Aer}\Bigl(\bigl(M^{\vee}\bigr)^{\otimes r}, A^{\otimes (r+1)}\Bigr).
\end{equation}
For any \(\Theta_1 \otimes \cdots \otimes \Theta_r \in M^{\otimes_A r}\), the image \(\Phi(\Theta_1 \otimes \cdots \otimes \Theta_r)\) is given by
\[
\alpha_1 \otimes \cdots \otimes \alpha_r \mapsto \pmone\, \Theta''_1(\alpha_1) \otimes \Theta'_1(\alpha_1)\,\Theta''_2(\alpha_2) \otimes \cdots \otimes \Theta'_{r-1}(\alpha_{r-1})\,\Theta''_r(\alpha_r) \otimes \Theta'_r(\alpha_r).
\]
Here,  \(\pmone\) follows from the Koszul sign rule.

\begin{proof}[Proof for Proposition \ref{prop: mc for dpois}]
	Let $M = \KA(  A)$.
	First, the right-hand side of (\ref{eq: nc cot to map}) is related to brackets on \(A\).
	For any \(r \in \bbZ_{\geq 0}\) and a \(\bbZ_r\)-equivariant map
	\[
	P \in \dgHom_{(A^e)^{\otimes r}}\Bigl((\KA(  A)[1+n])^{\otimes r}, A^{\otimes r}\Bigr)^{\bbZ_r} [n+1],
	\]
	the associated bracket on $A$ is given by
	\begin{equation}\label{for: ass bracket}
	x_1 \otimes \cdots \otimes x_r \longmapsto P \cc (\s^n \otimes \dots \s^n ) \cc (d \otimes \dots \otimes d) (x_1 \otimes \cdots \otimes x_r) = \pmone \Lambda\Bigl(\s^{n} d x_1 \otimes \cdots \otimes \s^{n}  dx_r\Bigr).
	\end{equation}
	Here, \(d: A \to \KA(  A) [1]\) is the de Rham differential; \(\bbZ_r\) acts on \(\KA(  A)[1+n])^{\otimes r}\text{ and } A^{\otimes r}\) by cyclic permutation.

	Since \(A\) is smooth,
	$\Phi$ of (\ref{for: Phi}) is an isomorphism.
	On the other hand, one has the identification
	\(A \otimes_{A^e} A^{\otimes (r+1)} \cong A^{\otimes r}\).
	Let
	\[
	\Psi: A \otimes_{A^e} \Bigl(\dgDDer_S(A)[-1-n]\Bigr)^{\otimes_A r} \longrightarrow \dgHom_{\Aer}\Bigl(\KA(  A)[1+n]^{\otimes r}, A^{\otimes r}\Bigr)
	\]
	be the natural composition.
	Note that
	the weight decomposition of $\Ts{n+1}_{  A}$ descends well to \(\Bigl(\Ts{n+1}_{  A,\,cyc} [n+1]\Bigr)\);
	\(\Bigl(\Ts{n+1}_{  A,\,cyc} [n+1]\Bigr)^{(r)}\) is the weight $r$ component of $\Ts{n+1}_{  A,\,cyc} [n+1]$, by definition,
	\[
	\Bigl(\Ts{n+1}_{  A,\,cyc} [n+1]\Bigr)^{(r)} = \dgHom_{({A}^e)^{r}} (\KA(A/S)[1+n]^{\otimes r}, A^{\otimes r})_{\bbZ_r} [n+1].
	\]
	Then averaging  the $\bbZ_r$-action,
	one has the morphism
	\[
	\Upsilon: \Bigl(\Ts{n+1}_{  A,\,cyc} [n+1]\Bigr)^{(r)} \longrightarrow \dgHom_{(A^e)^r}\Bigl(\KA(  A)[1+n]^{\otimes r}, A^{\otimes r}\Bigr)^{\bbZ_r} [n+1].
	\]
	By our assumptions, $\Upsilon$ is an isomorphism.
	The second assertion is proved exactly as in \cite[Theorem 4.2.3]{Van2008Double}.
\end{proof}

\begin{example}\label{eg: dpoiss field on QQ}
	Given a finite quiver $Q$, the noncommutative bivector field for the double Poisson bracket in Example \ref{eg: quiver double poisson} is
	\[
	P_{\DQ} = \sum_{a \in Q} - \si\D_{a} \otimes \si \D_{a^\ast}.
	\]
\end{example}

Next, we recall double Poisson structures in the derived setting.
Note that Proposition \ref{prop: mc for dpois} shows that a dg double Poisson bracket is governed by a Maurer--Cartan equation, a viewpoint that naturally extends to the derived setting (\cite{BCER2012Non,CEEY2017Der, CheEsh2020, Pri2020Shi,Yeu2022Pre}).
However, noncommutative \(\ka\) differentials and double derivations in previous constructions are replaced by noncommutative cotangent complexes and noncommutative tangent complexes.
Following Andr\'e--Quillen (co)homology theory,
for a dg algebra \(A \in \dga_S\), the {\it (absolute) cotangent complex  \(\bbL_{S \bcs A}\) of  \(A\)}  as an object in \(\Ho(\Mod(A^e))\), is defined to be the homotopy fiber of the multiplication \(\bm: A \otimes A \to A\):
\begin{equation}
	\bbL_{S \bcs A}\longrightarrow  A \otimes A \stackrel{\bm}{\longrightarrow} A \stackrel{+1}{\longrightarrow}
\end{equation}
Then, the {\it noncommutative tangent complex \(\bbT_{S \bcs A}\) of \(A\) }is defined to be the bimodule dual:
\[
\bfR \dgHom_{A^e} (\bbL_{S \bcs A}, A\otimes A).
\]
Consequently,
for an arbitrary cofibrant resolution $Q \cof A$ in $\dga_S$,
the noncommutative cotangent complex is quasi-isomorphic to
\(A^e \otimes_{Q^e} \KA(Q)\);
the noncommutative tangent complex is quasi-isomorphic to \(\dgDDer_S (Q) \otimes_{Q^e} A^e\).
Note that in the derived setting, 
the \(n\)-shifted noncommutative cotangent bundle \(\Ts{n}_{A}\) of \(A\) as an object in \(\Ho(\DGA_S)\)
\footnote{\( \DGA_S\) is the category of dg algebras over \(S\).}
is constructed by choosing an explicit resolution of the noncommutative tangent complex.
A standard fact is that \(\Ts{n}_{A}\) 
is quasi-isomorphic to \(T_Q (\dgDDer_S (Q)[-n])\).

Now, we briefly recall the Maurer-Cartan formalism.
See \cite{Saf2017Lec} for further details.
Let \(\Omega(\Delta_n)\) be the commutative dg algebra of polynomial differential forms on the standard \(n\)–simplex \(\Delta_n\).
For a nilpotent dg Lie algebra \((L, \pa, \{-,-\})\),
set \(\MC(L) = {\{x \in L\ \big\vert\ \pa x + \frac{1}{2} \{x,x\} = 0\}}.\)
The \emph{Maurer–Cartan space of \(L\)} is the simplicial set
\(\underline{\MC}_\bullet(L)=\MC\big(L\otimes_\bbK\Omega(\Delta_\bullet)\big).\)
In general, the dg Lie algebra of interest is not nilpotent but pronilpotent.
The latter means \(L\) admits a cofiltration \(L_0\leftarrow L_1\leftarrow\cdots,\) such that \(L=\lim_n L_n\) and each \(L_n\) is nilpotent. Then for a pronilpotent \(L\), the \emph{Maurer–Cartan space of \(L\)} is \(\underline{\MC}_\bullet(L):=\lim_n\underline{\MC}_\bullet (L_n) \), the limit of \(\{\underline{\MC}_\bullet(L_n)\}\).
A standard fact is that given a graded dg Lie algebra \(L\), its completion \(L^{\geq 2}\) in weights \(\ge 2\) is pronilpotent.
The cofiltration is given by
\(L^{\geq 2}/ L^{\geq 3} \leftarrow L^{\geq 2}/L^{\geq 4} \leftarrow \cdots.\)

Now apply the above Maurer--Cartan formalism to the noncommutative shifted cotangent bundles.
The following notion serves as the moduli of shifted double Poisson structures.
\begin{definition}\label{def: shifted dpois}
	Let $A \in \dga_S$ and \(Q\) a cofibrant resolution of \(A\).
	Let $n$ be an integer.
	The \emph{space of $n$–shifted double Poisson structures on $ A$}
	is the simplicial set
	\[
	\dpois(A; n) : = \underline{\MC}_\bullet \Bigl(F^2\TTs{n+1}_{ Q,\,cyc}[n+1]\Bigr).
	\]
\end{definition}
At first sight, \(\dpois\) seems to depend on the choice of cofibrant resolution. However, we have the following theorem.
\begin{theorem}\label{thm: dpoiss not reso}
	Let $A \in \dga_S$.
	Let $n$ be an integer.
	For different cofibrant resolutions \(Q_1\cof A,\ Q_2 \cof A\) in \(\dga_S\), \( \underline{\MC}_\bullet \Bigl(F^2\TTs{n+1}_{ Q_1 ,\,cyc}[n+1]\Bigr)\) is weakly equivalent to \( \underline{\MC}_\bullet \Bigl(F^2\TTs{n+1}_{ Q_2 ,\,cyc}[n+1]\Bigr)\) in the category \(\sSet\) of simplicial sets.
\end{theorem}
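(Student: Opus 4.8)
The plan is to reduce the statement to the standard homotopy invariance of the Maurer--Cartan space of a pronilpotent dg Lie algebra, and then to connect the two filtered dg Lie algebras $F^2\TTs{n+1}_{Q_1,cyc}[n+1]$ and $F^2\TTs{n+1}_{Q_2,cyc}[n+1]$ by a weight-filtered quasi-isomorphism. The relevant invariance principle (see \cite{Saf2017Lec}) is: if $\phi\colon L \to L'$ is a morphism of pronilpotent dg Lie algebras inducing a quasi-isomorphism on associated graded pieces, then $\underline{\MC}_\bullet(L) \to \underline{\MC}_\bullet(L')$ is a weak equivalence in $\sSet$. Since $F^2\TTs{n+1}_{Q,cyc}[n+1]$ is exactly the weight-$\geq 2$ completion of the weight-graded dg Lie algebra $\Ts{n+1}_{Q,cyc}[n+1]$, it is pronilpotent for the weight cofiltration recalled in the excerpt, so it suffices to produce a weight-preserving morphism that is a quasi-isomorphism in each weight.

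First I would compare the resolutions directly. Both $Q_1 \cof A$ and $Q_2 \cof A$ are cofibrant objects equipped with trivial fibrations to $A$. Since $Q_1$ is cofibrant and $Q_2 \twoheadrightarrow A$ is a trivial fibration, the lifting axiom produces a morphism $f\colon Q_1 \to Q_2$ in $\dga_S$ over $A$, and two-out-of-three forces $f$ to be a weak equivalence $f\colon Q_1 \we Q_2$.

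Next I would transport $f$ through the polyvector construction, working not with $\dgDDer_S$ pointwise but with the homotopy-invariant object it computes. By homological smoothness of $A$, the cotangent complex $\bbL_{S\bcs A}$ is perfect over $A^e$, hence so is the tangent complex $\bbT_{S\bcs A}$; for each $i$ the complex $\dgDDer_S(Q_i)\otimes_{Q_i^e} A^e$ is a model for $\bbT_{S\bcs A}$, and $f$ induces a quasi-isomorphism between these models. Using Proposition \ref{prop: mc for dpois} together with perfectness, the weight-$r$ component is identified in $\Ho(\Mod)$ with
\[
\dgHom_{(A^e)^{\otimes r}}\Bigl(\bigl(\bbL_{S\bcs A}[1+n]\bigr)^{\otimes r},\, A^{\otimes r}\Bigr)^{\bbZ_r}[n+1],
\]
a description manifestly independent of the chosen resolution (the $\bbZ_r$-invariants cause no trouble in characteristic zero). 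This yields a quasi-isomorphism on each associated graded weight piece.

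The main obstacle is precisely the compatibility of this comparison with the dg Lie bracket: a morphism of dg algebras does not act on double derivations on the nose, so the induced map on $\dgDDer_S$ is only defined up to homotopy. I would resolve this either by promoting $f$ to a filtered $L_\infty$-morphism of polyvector dg Lie algebras --- which already suffices to induce a weak equivalence of Maurer--Cartan spaces --- or, more robustly, by showing that $Q \mapsto \underline{\MC}_\bullet\bigl(F^2\TTs{n+1}_{Q,cyc}[n+1]\bigr)$ carries weak equivalences of cofibrant resolutions to weak equivalences and connecting $Q_1$ and $Q_2$ through a common refinement dominating both. Granting the weight-wise quasi-isomorphism together with the Maurer--Cartan invariance principle, the asserted weak equivalence in $\sSet$ follows.
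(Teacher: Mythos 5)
Your overall strategy is reasonable in outline, but the step you yourself label ``the main obstacle'' --- compatibility of the comparison with the dg Lie bracket --- is in fact the entire content of the theorem, and neither of your proposed fixes is carried out. Option (a), promoting $f$ to a filtered $L_\infty$-morphism of polyvector algebras, is asserted without any construction; option (b), showing that $Q\mapsto \underline{\MC}_\bullet\bigl(F^2\TTs{n+1}_{Q,\,cyc}[n+1]\bigr)$ sends weak equivalences of cofibrant resolutions to weak equivalences and passing through a common refinement, is circular --- it restates the theorem. There is also a structural reason your single lift $f\colon Q_1\we Q_2$ cannot suffice: $\dgDDer_S(Q)=\dgHom_{Q^e}(\KA(Q),Q^{\otimes 2})$ is contravariant in the module $\KA(Q)$ and covariant in the coefficients, so one morphism of dg algebras induces no map on double derivations at all; you need comparison maps in \emph{both} directions. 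Finally, your identification of the weight-$r$ graded piece with $\dgHom_{(A^e)^{\otimes r}}\bigl((\bbL_{S\bcs A}[1+n])^{\otimes r},A^{\otimes r}\bigr)^{\bbZ_r}[n+1]$ invokes homological smoothness (perfectness of the tangent complex), which is not among the hypotheses of the statement as given --- the paper's proof uses no smoothness assumption.

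The paper closes exactly this gap by a strict, explicit construction. Since every object of $\dga_S$ is fibrant, \cite[Lemma 2.24]{DS1995Hom} upgrades the two resolutions to a homotopy-equivalence pair: quasi-isomorphisms $F\colon Q_1\to Q_2$ and $G\colon Q_2\to Q_1$ with $F\cc G$ and $G\cc F$ homotopic to the identities. Using both maps at once, one defines
\[
\widetilde G\colon \dgDDer_S(Q_1)\longrightarrow \dgDDer_S(Q_2),\qquad
\Theta\longmapsto (F\otimes F)\cc\Theta\cc G,
\]
via the canonical morphism $Q_1^e\otimes_{Q_2^e}\KA(Q_2)\to\KA(Q_1)$, and symmetrically $\widetilde F$. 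These are genuine (not merely up-to-homotopy) morphisms, extend to strict dg algebra maps $T_{Q_1}\dgDDer_S(Q_1)\rightleftarrows T_{Q_2}\dgDDer_S(Q_2)$, descend to the weight completions and cyclic quotients, and are homotopy inverse to each other. Strict compatibility with the double Schouten--Nijenhuis bracket is then verified on generators, via the Leibniz rule and (\ref{for: dpoiss on nc cot}), by the single identity $\widetilde G(b\,\D_a\,c)=(F\otimes F)\cc(b\,\D_a\,c)\cc G$. This produces a homotopy equivalence of pronilpotent dg Lie algebras between $F^2\TTs{n+1}_{Q_1,\,cyc}[n+1]$ and $F^2\TTs{n+1}_{Q_2,\,cyc}[n+1]$, whence the weak equivalence of Maurer--Cartan spaces --- with no $L_\infty$ machinery and no perfectness hypothesis. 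To repair your argument you would need to supply precisely this bracket-compatible strict morphism (or actually construct the filtered $L_\infty$ map), and for that the two-sided homotopy-inverse data $(F,G)$, not just the lift $f$, is essential.
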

\begin{proof}
	For simplicity, we prove the theorem for \(n=0\).
	Since \(\dga_S\) is a fibrant model category,
	according to \cite[Lemma 2.24]{DS1995Hom}, there exist quasi-isomorphisms \(F: Q_1 \to Q_2\) and \(G: Q_2 \to Q_1\) such that
	\(F\cc G\) is homotopy equivalent to \(\id : Q_2 \to Q_2\) and \(G \cc F\) is homotopy equivalent to \(\id : Q_1 \to Q_1\).
	
	First, there exists a canonical morphism of dg algebras from \(T_{Q_1}\dgDDer_S(Q_1)\) to \(T_{Q_2}\dgDDer_S(Q_2)\).
	Using \(G\), there is a canonical morphism in \(\Mod(Q_1^e)\):
	\[
	Q_1^e\otimes_{Q_2^e}\KA(  Q_2)\to\KA(  Q_1),\qquad (a'\otimes a'')\otimes\delta b\mapsto (a'\otimes a'')\otimes\delta G(b).
	\]
	Therefore, there is a composition of \(S\)-linear maps:
	\begin{align*}
		\widetilde{G}: &\ \dgDDer_S(Q_1) \cong \dgHom_{Q_1 ^e} \big( \KA(  Q_1), Q_1 ^{\otimes 2}\big) \longrightarrow  \dgHom_{Q_1 ^e} \big(Q_1 ^e \otimes_{Q_2 ^e}\KA(  Q_2), Q_1 ^{\otimes 2}\big)\\
		& \longrightarrow \dgHom_{Q_2 ^e} \big( \KA(  Q_2), Q_1 ^{\otimes 2} \big) \longrightarrow \dgHom_{Q_1 ^e} \big( \KA(  Q_2), Q_2 ^{\otimes 2} \big) \cong \dgDDer_S(Q_2).
	\end{align*}
	Similarly, one has \(\widetilde{F}:\dgDDer_S(Q_2)\to\dgDDer_S(Q_1)\).
	The above construction yields dg algebra morphisms; by an abuse of notation,
	\[
	\widetilde{G} : T_{Q_1}\dgDDer_S(Q_1) \to T_{Q_2} \dgDDer_S(Q_2),\qquad \widetilde{F}: T_{Q_2}\dgDDer_S(Q_2) \to T_{Q_1} \dgDDer_S(Q_1).
	\] 
	It is clear that \(\widetilde{F}\) and \(\widetilde{G}\) descend to the completions and the cyclic quotients.
	By construction, \(\widetilde{F}\) and \(\widetilde{G}\) are homotopy inverses of each other.
	
	Second, if \(\widetilde{F}\) and \(\widetilde{G}\) preserve the double Poisson brackets, then by the above analysis,
	they induce a homotopy equivalence of dg Lie algebras between
	\(\TTs{0}_{  Q_1 ,\,cyc}\) and \(\TTs{0}_{  Q_2,\,cyc}\);
	therefore, \(\underline{\MC}_\bullet\big(\TTs{0}_{  Q_1,\,cyc}\big)\) and \(\underline{\MC}_\bullet\big(\TTs{0}_{  Q_2,\,cyc}\big)\) are weakly equivalent as simplicial sets.
	
	Now, we prove that \(\widetilde{F}\) and \(\widetilde{G}\) preserve double Poisson brackets.
	Due to the Leibniz rule of double Poisson brackets and (\ref{for: dpoiss on nc cot}),
	we only need to verify the commutativity of the following diagram:
	\begin{equation*}
		\xymatrix{
			Q_1 \otimes \dgDDer_S(Q_1)  \ar[rr]^-{ \db} \ar[d]_-{F \otimes \widetilde{G}}&& Q_1 \otimes Q_1 \ar[d]^-{F \otimes F}\\
			Q_2 \otimes \dgDDer_S(Q_2) \ar[rr]^-{ \db}&& Q_2 \otimes Q_2.
		}
	\end{equation*}
	By the \(\dgHom\)–\(\otimes\) duality, it is equivalent to the commutativity of the following diagram:
	\begin{equation*}
		\xymatrix{
			\dgDDer_S(Q_1)  \ar[rr]^-{ \id} \ar[d]_-{\widetilde{G}}&& \dgHom_S (Q_1, Q_1 \otimes Q_1) \ar[d]^-{F \otimes F  \cc (-)\cc G }\\
			\dgDDer_S(Q_2) \ar[rr]^-{ \id}&& \dgHom_S (Q_2, Q_2 \otimes Q_2).
		}
	\end{equation*}
	For a general element \(b \D_a c \in \dgDDer_S(Q_1)\),
	\[
	\widetilde{G}(b \D_a c) = F \otimes F  \cc (b \D_a c) \cc G.
	\]
	Therefore, the above diagram commutes, and the statement follows.
\end{proof}

A canonical problem in the derived setting is whether a geometric structure is stable under weak equivalences.
Note that
for two quasi-isomorphic dg algebras \(A, B \in \dga_S\), their cofibrant resolutions \(Q_A,\ Q_B\) are quasi-isomorphic.
By the same analysis as in Theorem \ref{thm: dpoiss not reso}, we have:
\begin{corollary}
	Let $A, B \in \dga_S$.
	Let $n$ be an integer.
	For any quasi-isomorphism \(F: A \to B\) in \(\dga_S\), \(\dpois(A; n) \) is isomorphic to \(\dpois(B; n)\) in the homotopy category \(\Ho(\sSet)\).
\end{corollary}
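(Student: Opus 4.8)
The plan is to reduce the statement to Theorem \ref{thm: dpoiss not reso}, since the only genuinely new ingredient is the passage from a quasi-isomorphism \(F\colon A\to B\) to a comparison of chosen cofibrant resolutions; once such a comparison is in hand, the bracket-preservation argument of Theorem \ref{thm: dpoiss not reso} applies essentially verbatim. As in that theorem I would treat \(n=0\) for readability, the shift being cosmetic.

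First I would fix cofibrant resolutions \(p_A\colon Q_A\cof A\) and \(p_B\colon Q_B\cof B\) in \(\dga_S\). Since \(p_B\) is an acyclic fibration, \(Q_A\) is cofibrant (so \(S\to Q_A\) is a cofibration), and \(F\cc p_A\colon Q_A\to B\) covers \(F\), the lifting axiom of the model structure on \(\dga_S\) produces a morphism \(\varphi\colon Q_A\to Q_B\) with \(p_B\cc\varphi=F\cc p_A\). Applying the two-out-of-three property to \(p_A\), \(p_B\), and \(F\), all of which are quasi-isomorphisms, shows that \(\varphi\) is itself a quasi-isomorphism. Because \(\dga_S\) is a fibrant model category, \(Q_A\) and \(Q_B\) are cofibrant-fibrant, so \(\varphi\) is a quasi-isomorphism between cofibrant-fibrant objects; by \cite[Lemma 2.24]{DS1995Hom} it is a homotopy equivalence, yielding \(\psi\colon Q_B\to Q_A\) with \(\varphi\cc\psi\simeq\id_{Q_B}\) and \(\psi\cc\varphi\simeq\id_{Q_A}\).

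This places me in exactly the situation of the proof of Theorem \ref{thm: dpoiss not reso}, with \(Q_A,Q_B\) playing the roles of \(Q_1,Q_2\) and \(\varphi,\psi\) those of the homotopy-inverse quasi-isomorphisms there. Running that argument, \(\varphi\) and \(\psi\) induce dg algebra morphisms \(\widetilde{\varphi}\colon T_{Q_A}\dgDDer_S(Q_A)\to T_{Q_B}\dgDDer_S(Q_B)\) and \(\widetilde{\psi}\) in the opposite direction; these descend to the weight completions and the cyclic quotients, are mutually homotopy inverse, and preserve the double Schouten--Nijenhuis brackets. The bracket-preservation again reduces, via the Leibniz rule and the generator values (\ref{for: dpoiss on nc cot}), to the commutativity of the two diagrams appearing in that proof, driven by the identity \(\widetilde{\varphi}(b\,\D_a c)=\varphi\otimes\varphi\cc(b\,\D_a c)\cc\psi\). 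Hence \(\widetilde{\varphi},\widetilde{\psi}\) restrict to a homotopy equivalence of pronilpotent dg Lie algebras \(F^2\TTs{n+1}_{Q_A,\,cyc}[n+1]\simeq F^2\TTs{n+1}_{Q_B,\,cyc}[n+1]\).

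Finally, homotopy-equivalent pronilpotent dg Lie algebras have weakly equivalent Maurer--Cartan spaces, so \(\underline{\MC}_\bullet\bigl(F^2\TTs{n+1}_{Q_A,\,cyc}[n+1]\bigr)\) and \(\underline{\MC}_\bullet\bigl(F^2\TTs{n+1}_{Q_B,\,cyc}[n+1]\bigr)\) are weakly equivalent in \(\sSet\), that is, \(\dpois(A;n)\cong\dpois(B;n)\) in \(\Ho(\sSet)\). The hard part is confined to the second paragraph: constructing the lift \(\varphi\) and verifying it is a weak equivalence between cofibrant-fibrant objects. After that everything is a formal consequence of Theorem \ref{thm: dpoiss not reso} together with the homotopy invariance of Maurer--Cartan spaces, so no new computation is required.
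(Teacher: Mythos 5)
Your proposal is correct and takes essentially the same approach as the paper: the paper's justification for this corollary is precisely the observation that cofibrant resolutions of quasi-isomorphic dg algebras are themselves quasi-isomorphic, followed by ``the same analysis as in Theorem \ref{thm: dpoiss not reso}''. Your second paragraph (the lifting-axiom construction of \(\varphi\colon Q_A\to Q_B\), two-out-of-three, and the Whitehead-type lemma from \cite{DS1995Hom}) simply makes explicit the resolution comparison that the paper leaves implicit, after which everything proceeds exactly as in the paper's Theorem \ref{thm: dpoiss not reso}.
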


\begin{remark}
	It is more natural to regard $\dpois(A; n)$ as an object in the homotopy category $\Ho(\sSet)$ or in the associated $\infty$-category. Throughout this work, we fix an explicit cofibrant resolution $Q \cof A$ and take $\dpois(Q; n) \in \sSet$ as a model for $\dpois(A; n)$.
\end{remark}

Finally, it is natural to seek an analogue of Lemma \ref{lem: dpois to dgla} for shifted double Poisson structures. In \cite{BCER2012Non}, Berest-Chen-Eshmatov-Ramadoss introduced the homotopy version of NC Poisson structures.

\begin{definition}
	Let \(A \in \dga_S\).
	Let \(n\) be an integer.
	An {\it \(n\)-shifted  NC \(P_\infty\)-structure on \(A \in \dga_S\) }is  an $L_{\infty}$-structure $\{l_n: \wedge^d (A_{\natural}[n]) \to (A_{\natural}[n]) \}_{d \geq 1}$ such that
	for all $a_1,..,a_{d-1}\,\in\,A$ homogeneous, $l_d( \s^d\bar{a}_1, \cdots,  s^d\bar{a}_{d-1},\mbox{--}):A_{\natural}[n] \to A_{\natural}[n]$ is induced on
	$A_{\natural}[n]$ by a derivation $H_{a_1, \cdots ,a_{d-1}}:A[n] \to A[n]$.
\end{definition}

The following corollary is clear.
We remark that the \(L_\infty\)-structure on \(A_{\natural}[n]\) is well known,
see \cite[Remark 2.12]{Pri2020Shi}.
\begin{corollary}\label{cor: dpois to L_inf}
	Let $A \in \dga_S$.
	Let $n$ be an integer.
	If \(A\) admits an \(n\)-shifted double Poisson structure, then \(A\) canonically admits an \(n\)-shifted  NC \(P_\infty\)-structure.
\end{corollary}

\section{Kontsevich--Rosenberg Principle}\label{sec: K-R}

The Kontsevich--Rosenberg principle states that a noncommutative geometric structure is meaningful precisely when it induces the classical counterpart on representation schemes.
In this section, we recall the construction of derived representation schemes  in \cite{BKR2013Der} and prove that shifted double Poisson structures on \(A\) induce shifted Poisson structures on derived representation schemes.
In particular, we show that shifted double Poisson structures are related to the shifted Poisson structures on the derived moduli stacks of representations.

\subsection{Derived representation schemes}\label{subsec: drep}

Let $V\in \Com_S$ be a complex of finite total dimension, and let $\dgEnd_S (V)$ denote the complex of  endomorphisms of $V$. For $A \in \dga_S$, consider the following representation functor:
\[
\DRep_V^{  A}: \cdga_\bbK \to \Set,\quad C \mapsto \DRep_V ^{  A}(C)  : = \Hom_{\dga_S}(A, \dgEnd_S (V)\otimes_\bbK C).
\]
As established in \cite[Theorem 2.1]{BKR2013Der}, this functor is representable; that is, there exists a commutative dg algebra \(A_V\in\cdga_\bbK\) such that
\( \DRep_V^{  A}(C)=\Hom_{\cdga_\bbK}(A_V, C). \)
When $V$ is fixed, this yields a functor
\((-)_V: \dga_S \to \cdga_\bbK.\)
A key result in \cite{BKR2013Der} is the following.

\begin{proposition}\label{prop: drep quillen}\cite[Theorem 2.2]{BKR2013Der}
	The functors
	\[
	(-)_V: \dga_S \rightleftarrows \cdga_\bbK : \dgEnd_S (V)\otimes -
	\]
	form a Quillen adjunction. Furthermore, for an ordinary algebra $A \in \Alg_S \subset \dga_S$,
	$\rmH^0 (\bfL (A)_V) \cong A_V.$
\end{proposition}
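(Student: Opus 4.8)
The plan is to handle the two claims in turn. Representability of $\DRep_V^{A}$, already established above, provides a natural bijection $\Hom_{\cdga_\bbK}(A_V, C) \cong \Hom_{\dga_S}(A, \dgEnd_S(V) \otimes_\bbK C)$, i.e. an honest adjunction $(-)_V \dashv R$ with $R := \dgEnd_S(V) \otimes_\bbK -$ between the underlying categories. To promote it to a Quillen adjunction I would check that $R$ is right Quillen, i.e. that it preserves fibrations and trivial fibrations. In both $\dga_S$ and $\cdga_\bbK$ the weak equivalences are the quasi-isomorphisms and the fibrations are the surjections (in the appropriate degrees). Since $\bbK$ is a field and $V$ has finite total dimension, $\dgEnd_S(V)$ is a bounded complex of finite-dimensional $\bbK$-vector spaces, so $\dgEnd_S(V) \otimes_\bbK -$ is exact (hence sends quasi-isomorphisms to quasi-isomorphisms) and manifestly preserves surjections. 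Thus $R$ preserves fibrations and trivial fibrations, which yields the Quillen adjunction.

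For the second claim, left Quillen-ness of $(-)_V$ means $\bfL(A)_V$ is modeled by $Q_V$ for any cofibrant resolution $Q \cof A$ in $\dga_S$. I would identify $\rmH^0(Q_V)$ with $A_V$ by a Yoneda argument on the category $\calg_\bbK$ of ordinary commutative $\bbK$-algebras, using repeatedly that $\rmH^0$ is left adjoint to the inclusion of degree-$0$ algebras: any morphism from a non-positively graded dg algebra into a degree-$0$ target factors uniquely through its $\rmH^0$. Concretely, for ordinary $C$ one has natural bijections
\[
\Hom_{\calg_\bbK}(\rmH^0(Q_V), C) \cong \Hom_{\cdga_\bbK}(Q_V, C) \cong \Hom_{\dga_S}(Q, \dgEnd_S(V) \otimes_\bbK C).
\]
When $V$ is concentrated in degree $0$ the target $\dgEnd_S(V) \otimes_\bbK C$ is an ordinary $S$-algebra, so the same factorization applies to the source: the last term equals $\Hom_{\Alg_S}(\rmH^0(Q), \dgEnd_S(V) \otimes_\bbK C)$. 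Since $Q \cof A$ and $A$ is concentrated in degree $0$, we have $\rmH^0(Q) \cong A$, whence this is exactly $\DRep_V^A(C)$, which by representability and ordinariness of $A_V$ equals $\Hom_{\calg_\bbK}(A_V, C)$. By the Yoneda lemma, $\rmH^0(Q_V) \cong A_V$.

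The step needing the most care is the last comparison. It is crucial to note that $\Hom_{\dga_S}(Q, \dgEnd_S(V) \otimes_\bbK C) \cong \Hom_{\dga_S}(A, \dgEnd_S(V) \otimes_\bbK C)$ is \emph{not} an instance of ``a quasi-isomorphism $Q \to A$ induces a bijection on maps into a fixed target''---which is false in general---but rather follows from the target being concentrated in degree $0$, which forces every dg algebra map out of $Q$ to factor through $\rmH^0(Q) \cong A$. This is precisely the point at which the derived and classical representation schemes are reconciled, and it is where the hypothesis that $A$ is ordinary enters decisively. For a general complex $V$ one would instead work with the degree-$0$ cocycles $Z^0(\dgEnd_S(V))$ and track the resulting structure, but the degree-$0$ case already carries the essential content and recovers the classical representation scheme $A_V$.
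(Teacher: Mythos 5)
The paper offers no proof of this proposition at all---it is imported verbatim from \cite[Theorem 2.2]{BKR2013Der}---so there is no internal argument to compare against; judged on its own terms, your reconstruction is essentially the standard proof of the cited theorem and is correct as far as it goes. The first half is exactly right: the adjunction itself is Proposition \ref{prop: alg duality}(b), and since $\bbK$ is a field and $V$ has finite total dimension, $\dgEnd_S(V)\otimes_\bbK -$ is exact and preserves degreewise surjections, hence preserves fibrations and trivial fibrations, which is all that is needed for a Quillen pair. The second half, via the reflection $\rmH^0 \dashv (\text{inclusion of degree-}0\text{ algebras})$ applied on both sides of the adjunction and Yoneda on ordinary test algebras, is also the expected argument, and you are right to insist that the identification $\Hom_{\dga_S}(Q,\dgEnd_S(V)\otimes_\bbK C)\cong\Hom_{\Alg_S}(A,\dgEnd_S(V)\otimes_\bbK C)$ comes from the target being concentrated in degree $0$ (so that maps out of $Q$ factor through $\rmH^0(Q)\cong A$), and not from any spurious homotopy invariance of $\Hom$-sets.

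One caveat, which you partly flag yourself: the section's standing hypothesis takes $V\in\Com_S$ to be a \emph{complex} of finite total dimension, whereas your proof of $\rmH^0(\bfL(A)_V)\cong A_V$ is complete only when $V$ is concentrated in degree $0$; the closing remark about $Z^0(\dgEnd_S(V))$ is a gesture rather than an argument. Note, though, that this wrinkle is inherited from the paper's own conventions rather than introduced by you: for $V$ not concentrated in degree $0$, $\dgEnd_S(V)$ has positive-degree components, so $\dgEnd_S(V)\otimes_\bbK C$ does not even lie in $\dga_S$ as the paper defines it (non-positive degrees), and for such $V$ the representing object $A_V$ of an ordinary algebra $A$ need not itself be ordinary, so the displayed isomorphism would require reinterpretation in any case. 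Since the paper immediately specializes to $V=\bbK^{\bd}$ in degree $0$, your restricted argument covers every use the paper makes of the proposition.
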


For future reference, we recall the adjunction explicitly.
\begin{proposition}\cite[Proposition 2.1]{BKR2013Der}\label{prop: alg duality}
	For any \(A \in \dga_S\), \(B \in \dga_S\) and \(R \in \cdga_\bbK\), there are natural bijections
	\begin{enumerate}
		\item[(a)] \(\Hom_{\dga_S}(\sqrt[V]{A}, B) \cong \Hom_{\dga_S}(A, \dgEnd_S(V) \otimes B)\);
		\item[(b)] \(\Hom_{\cdga_\bbK}(A_V, R) \cong \Hom_{\dga_S}(A, \dgEnd_S (V) \otimes R)\).
	\end{enumerate}
\end{proposition}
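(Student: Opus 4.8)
The plan is to read both bijections as adjunctions and to prove them by constructing explicit left adjoints. Statement (a) asserts that $\dgEnd_S(V)\otimes-\colon\dga_S\to\dga_S$ admits a left adjoint, which one names $\sqrt[V]{-}$; statement (b) is its commutative counterpart, asserting that $\dgEnd_S(V)\otimes-\colon\cdga_\bbK\to\dga_S$ admits a left adjoint $(-)_V$. I would prove (a) first and obtain (b) by passing to the universal commutative quotient, thereby also producing the representing algebra $A_V$ of Proposition \ref{prop: drep quillen}.

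For (a) I would separate existence from identification. The category $\dga_S$ is locally presentable, and since $V$ has finite total dimension over $\bbK$, the functor $\dgEnd_S(V)\otimes-$ is $\bbK$-linear, preserves all limits (tensoring with a finite-dimensional algebra is exact and commutes with products), and commutes with filtered colimits; it is therefore continuous and accessible, so the adjoint functor theorem for locally presentable categories furnishes a left adjoint $\sqrt[V]{-}$. To identify it, I would evaluate on free algebras: for a tensor algebra $T_S(M)$ on a finite-rank $S$-bimodule $M$, an algebra map $T_S(M)\to\dgEnd_S(V)\otimes B$ is the same datum as an $S$-bimodule map $M\to\dgEnd_S(V)\otimes B$, and finite-dimensionality of $V$ lets one transpose this, via the tensor--hom adjunction for $\dgEnd_S(V)$, into an unconstrained bimodule map out of a fixed ``matrix-coordinate'' bimodule into $B$. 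This exhibits $\sqrt[V]{T_S(M)}$ as a free algebra on matrix coordinates; for general $A$, presented as a colimit of free algebras, the left adjoint is the corresponding colimit, recovering Bergman's presentation in which $\sqrt[V]{A}$ is generated by the entries $a_{kl}$ of the generators $a$ of $A$ modulo the relations obtained by writing the defining relations of $A$ in matrix form. The universal map sends each generator to the matrix of its coordinates, and naturality in both variables is immediate.

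Statement (b) then follows because maps into a commutative target factor through abelianization. Writing $(-)_{\ab}$ for the universal commutative $\bbK$-algebra quotient and setting $A_V:=(\sqrt[V]{A})_{\ab}$, I would compose the abelianization adjunction with (a):
\[
\Hom_{\cdga_\bbK}(A_V,R)\cong\Hom_{\dga_S}\bigl(\sqrt[V]{A},R\bigr)\cong\Hom_{\dga_S}\bigl(A,\dgEnd_S(V)\otimes R\bigr),
\]
the first isomorphism valid since $R$ is commutative and the second being (a) with $B=R$. This simultaneously proves (b) and pins down the explicit model of $A_V$ used throughout Section \ref{sec: K-R}.

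The conceptual content being light, the main obstacle is bookkeeping in the dg and $S$-linear setting. One must equip $\sqrt[V]{A}$ with the unique differential and grading making the universal map $A\to\dgEnd_S(V)\otimes\sqrt[V]{A}$ a morphism of $\dga_S$, track the Koszul signs produced by the matrix-coordinate transposition, and---most delicately---keep straight the passage between algebras over the semisimple ring $S$ and algebras over $\bbK$: the endomorphism algebra $\dgEnd_S(V)$ splits along the idempotents $e_i$ as $V=\bigoplus_{i\in I}V_i$, so the matrix coordinates are naturally indexed by this decomposition and the $S$-structure is absorbed into the indexing, which is exactly what makes $A_V$ land in $\cdga_\bbK$ rather than merely in commutative $S$-algebras. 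Once these compatibilities are verified, both bijections are formal consequences of the two adjunctions.
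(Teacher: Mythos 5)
Your proof is correct in substance, but it takes a genuinely different route from the source the paper quotes: the proposition is cited from \cite{BKR2013Der} without proof, and the argument there (echoed in the paper's Section \ref{subsec: present}) is constructive from the outset. One forms the free product with the matrix algebra and takes the centralizer, $\sqrt[V]{A}:=\bigl(A\ast \dgEnd_S(V)\bigr)^{\dgEnd_S(V)}$ --- exactly the model $\sqrt[N]{A}=(A\ast_\bbK\gl_N(\bbK))^{\gl_N(\bbK)}$ recorded in Section \ref{subsec: present} --- and part (a) follows by combining the universal property of the free product with the classical Morita-type fact that an algebra map $\dgEnd_S(V)\to C$ forces $C\cong \dgEnd_S(V)\otimes C^{\dgEnd_S(V)}$. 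You instead obtain existence of the left adjoint abstractly (local presentability of $\dga_S$, plus the fact that tensoring with the finite-dimensional $\dgEnd_S(V)$ preserves limits and filtered colimits) and then identify $\sqrt[V]{-}$ on free algebras by transposing matrix coordinates, extending to general $A$ by colimits to recover Bergman's generators-and-relations presentation. Both are valid; the centralizer model buys in one stroke the explicit matrix coefficients $a_{ij}=\sum_k \e_{ki}a\e_{jk}$ that the paper uses throughout Section \ref{sec: K-R}, and requires no appeal to the adjoint functor theorem, whereas your version makes naturality and the dg/sign bookkeeping essentially automatic and produces the presentation only after the colimit step. Part (b) is handled identically in both treatments: $A_V=(\sqrt[V]{A})_\ab$ together with the abelianization adjunction, which is precisely the paper's defining equation for $(A)_N$.

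One imprecision to repair: in your displayed chain for (b), the middle term should be $\Hom_{\dga_\bbK}\bigl(\sqrt[V]{A},R\bigr)$ rather than $\Hom_{\dga_S}$, since $R\in\cdga_\bbK$ carries no natural $S$-algebra structure (an $S$-algebra map out of $\sqrt[V]{A}$ would require choosing orthogonal idempotents in $R$ matching the central idempotents $e_i$), and the abelianization adjunction you invoke lives over $\bbK$. You flag this $S$-versus-$\bbK$ bookkeeping yourself in the closing paragraph, and the paper's own formulation of (a) is loose on the same point (Section \ref{subsec: present} sidesteps it by assuming $S=\bbK$), so this is a typecheck slip rather than a gap --- but the chain as written should be corrected.
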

We recall the explicit constructions of \(\sqrt[V]{-},\ (-)_V\) in Section \ref{subsec: present}.
The derived affine scheme \(\Spec A_V\)
is called the \emph{derived representation scheme of \(A\) in \(V\)}, still denoted \(\DRep_{V}^{  A}\).
It appears that the derived representation scheme depends on the choice of \(V\). However,
Proposition 2.3 in \cite{BKR2013Der} states that if \(V\) and \(W\) are quasi-isomorphic in \(\Com_S\), then \(\DRep_{V}^{  A}\) and \(\DRep_{W}^{  A}\) are equivalent as derived schemes.
Throughout  this work, we  consider only \(V = \bbK^\bd = \bigoplus_{i \in I} \bbK^{d_i}\) for \(\bd \in \bigoplus_{i\in I} \bbZ_{\geq 0}\).
The corresponding commutative dg algebra is denoted by \(A_\bd\).

Since \(\GL_\bd (\bbK) = \prod_i \GL(\bbK^{d_i})\) acts on \(\DRep^{A} _\bd\) by conjugation,
we consider the following functor:
\begin{equation*}
	(-)^\GL _\bd : \dga_S \to \cdga_\bbK,\quad A \tos (A_\bd)^{\GL_\bd (\bbK)}. 
\end{equation*}
Here \((A_\bd)^{\GL_\bd(\bbK)}\) denotes the subalgebra of \(\GL_\bd(\bbK)\)-invariants.
\(\Spec (A)^\GL _\bd\) is called the {\it derived character scheme of $A$ in $\bbK^\bd$}, denoted by \(\Ch^{  A} _\bd\).
Although this functor does not appear to admit a right adjoint, Berest et al. proved the following proposition.
\begin{proposition}\cite[Theorem 2.6]{BKR2013Der}\label{prop: derived VG}
	Let $A \in \dga_S$.
	Let $\bd$ be a dimension vector.
	The functor $(-)^\GL _\bd$ admits a total left derived functor
	\begin{equation*}
		\bfL (-)^\GL _\bd : \Ho(\dga_S) \to \Ho(\cdga_\bbK).
	\end{equation*}
	Furthermore, for any $A \in \dga_S$, there is an isomorphism
	$\rmH^\bullet (\bfL (A)^\GL _\bd) \cong \rmH^\bullet (\bfL(A)_\bd)^{\GL_\bd (\bbK)}$.
\end{proposition}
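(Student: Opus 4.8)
The plan is to reduce the statement to a single technical fact, namely that over a field of characteristic zero taking $\GL_\bd(\bbK)$-invariants is an exact operation, and then feed this into the standard machinery for constructing total left derived functors. First I would observe that the functor factors as $(-)^\GL_\bd = (-)^{\GL_\bd(\bbK)} \circ (-)_\bd$, where $(-)_\bd = (-)_V$ for $V = \bbK^\bd$ is the representation functor and $(-)^{\GL_\bd(\bbK)}$ extracts the subalgebra of invariants. By Proposition \ref{prop: drep quillen}, $(-)_\bd$ is a left Quillen functor; in particular, by Ken Brown's lemma it carries quasi-isomorphisms between cofibrant objects of $\dga_S$ to quasi-isomorphisms in $\cdga_\bbK$, and its total left derived functor $\bfL(-)_\bd$ is computed on a cofibrant resolution $Q_A \cof A$.

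The heart of the argument is the behaviour of the invariants functor. For each $A$, the commutative dg algebra $A_\bd$ carries a $\GL_\bd(\bbK)$-action that is locally finite and rational: every element lies in a finite-dimensional rational subrepresentation. Since $\bbK$ has characteristic zero, $\GL_\bd(\bbK)$ is linearly reductive, so every rational representation splits into isotypic components and taking invariants coincides with applying the Reynolds projector onto the trivial isotypic part. This projector is natural and exact, whence $(-)^{\GL_\bd(\bbK)}$ is an exact functor on $\GL_\bd(\bbK)$-equivariant complexes: it preserves quasi-isomorphisms and commutes with the formation of cohomology.

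Combining these two points, I would verify that $(-)^\GL_\bd$ sends a quasi-isomorphism $f: Q_1 \to Q_2$ between cofibrant dg algebras to a quasi-isomorphism: indeed $f_\bd$ is an equivariant quasi-isomorphism by the Quillen property, and applying the exact functor $(-)^{\GL_\bd(\bbK)}$ preserves this. By the existence theorem for total left derived functors (a functor out of a model category that preserves weak equivalences between cofibrant objects admits a left derived functor, computed on cofibrant replacements), the functor $\bfL(-)^\GL_\bd$ exists and is given by $\bfL(A)^\GL_\bd = (Q_A)^\GL_\bd$. For the cohomology isomorphism I would then compute
\[
\rmH^\bullet\bigl(\bfL(A)^\GL_\bd\bigr) = \rmH^\bullet\bigl((Q_A)_\bd^{\,\GL_\bd(\bbK)}\bigr) \cong \rmH^\bullet\bigl((Q_A)_\bd\bigr)^{\GL_\bd(\bbK)} = \rmH^\bullet\bigl(\bfL(A)_\bd\bigr)^{\GL_\bd(\bbK)},
\]
where the middle isomorphism is exactly the exactness of invariants and the outer equalities are the definitions of the two derived functors on the resolution $Q_A$.

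The step I expect to be the main obstacle is the exactness of $(-)^{\GL_\bd(\bbK)}$ in this infinite-dimensional dg context. One must check that the $\GL_\bd(\bbK)$-action on the (typically non-finitely generated) dg algebra $A_\bd$ is genuinely rational and locally finite, so that complete reducibility of $\GL_\bd(\bbK)$ applies termwise and the Reynolds operator is defined; only then does exactness, and with it the commutation with cohomology, follow. Everything else is formal model-category bookkeeping built on Proposition \ref{prop: drep quillen}.
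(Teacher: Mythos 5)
Your proposal is correct, and it is essentially the proof of the cited source: the paper itself offers no argument for this proposition beyond the reference to \cite[Theorem 2.6]{BKR2013Der}, and the proof there proceeds exactly as you do --- factor $(-)^\GL_\bd$ through the left Quillen functor $(-)_\bd$ of Proposition \ref{prop: drep quillen}, observe that the $\GL_\bd(\bbK)$-action on $A_\bd$ is rational and locally finite so that linear reductivity in characteristic zero makes the invariants functor exact, and conclude existence of $\bfL(-)^\GL_\bd$ via Brown's lemma together with the standard existence theorem, with the isomorphism $\rmH^\bullet(\bfL(A)^\GL_\bd)\cong\rmH^\bullet(\bfL(A)_\bd)^{\GL_\bd(\bbK)}$ following from exactness of invariants applied to a cofibrant resolution. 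You also correctly identify the one genuinely non-formal step, the verification that the action on the (infinite-dimensional) algebra $A_\bd$ is rational, which holds because $A_\bd$ is generated by the matrix coefficients $a_{ij}$, each spanning a finite-dimensional $\GL_\bd(\bbK)$-stable subspace.
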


Since derived representation schemes and derived character schemes are affine, shifted Poisson structures are constructed by a standard procedure.
Throughout this work, we only consider derived affine schemes \(\Spec R\) which are derived finitely presentable.
We denote by \(\bbL_{\bbK\bcs R}\) the cotangent complex and by \(\bbT_{\bbK\bcs R}\) the tangent complex.
See \cite{Toe2014Der, Saf2017Lec} for details and references.

The space of shifted Poisson structures is defined as follows.
Let \(R\in\cdga_\bbK\) be derived finitely presentable.
Let \(n \in \bbZ\).
The {\it complex of \(n\)-shifted polyvector fields on \(\Spec R\)}
is defined to be
\[
\widehat{\Pol}(R; n)  =  \prod_{k \geq 0} \Sym^k _R ( \bbT_{\bbK \bcs R} [-n-1]).
\]
\begin{definition}[Calaque-Pantev-To\"en-Vaqui\'e-Vezzosi] \label{def: CPTVV's pois}
	Let \(R\in\cdga_\bbK\) be derived finitely presentable.
	Let \(n \in \bbZ\).
	The {\it space of \(n\)-shifted Poisson structures on \(\Spec R\)}
	is 
	\[
	\pois(R; n) : = \underline{\MC}_\bullet (F^2 \widehat{\Pol} ({R}; n)[n+1]). 
	\]
\end{definition}

\subsection{Explicit presentation}\label{subsec: present}

For simplicity,  in this subsection only, we assume \(S = \bbK\).
Let \(A \in \dga_S\). Fix  \(N \in \bbN\) and $V = \bbK^N$.
Denote \((A \ast_\bbK \gl_N(\bbK))^{\gl_N (\bbK)}\) by \(\sqrt[N]{A}\).
Here \((-)^{\gl_N (\bbK)}\) denotes the centralizer of \(\gl_N (\bbK)\).
Then the derived representation scheme \(\DRep^{A} _N\) is represented by
\begin{equation}
	(A)_N : =  (\sqrt[N]{A})_\ab : = \sqrt[N]{A}/([\sqrt[N]{A},\sqrt[N]{A}]),
\end{equation}
\((-)_\ab\) denotes the quotient by the ideal generated by commutators.

Let \({\e_{ij}}\) be the basis of elementary matrices in \(\gl_N (\bbK)\).
For any \(a \in A\), define the ``matrix coefficients'' in \(A \ast \gl_N (\bbK)\):
\[
a_{ij} : = \sum_{k = 1} ^N \e_{ki} a \e_{jk}.
\]
Then \(a_{ij} \in \sqrt[N]{A}\) for all \(i,j=1,\dots,N\).
We also write \(a_{ij}\) for the corresponding element in \(A_N\).
A standard fact is that \(A_N\) is generated by \(\{a_{ij}\mid a\in A,\ i,j=1,\dots,N\}\).

Equivalently, \(a_{ij}\) is a function on the representation scheme:
\[
a_{ij}: \rho \in \DRep^{  A} _N \mapsto (\rho_a)_{ij}.
\] 
Here \(\rho_a\) denotes the evaluation of \(\rho\) on \(a\), and \((\rho_a)_{ij}\) denotes its \((i,j)\)-entry. 
In particular, we set \(\Tr a = \sum_i a_{ii}\).

\begin{example}
	Let \( A = \bbK[x,y,z] \) be the polynomial algebra over \(\bbK\).
	\(A\) admits a cofibrant resolution \( Q = \bbK \langle x,y,z; \xi, \theta, \lambda; t\rangle \), where the generators \(x,y,z\) lie in degree \(0\);
	\(\xi,\theta,\lambda\) lie in degree \(-1\) and \(t\) lies in degree \(-2\). The differential on \(Q\) is defined by
	$$
	\pa \xi = [y,z],\ \pa \theta = [z,x],\ \pa \lambda = [x,y],\ \pa t = [x, \xi] + [y, \theta] + [z, \lambda].
	$$
	For \( V = \bbK^N \), 
	\(Q_N \cong \bbK[x_{ij},\,y_{ij},\,z_{ij};\, \xi_{ij},\, \theta_{ij},\, \lambda_{ij};\,t_{ij}]_{i,j = 1,\dots,N}\ ,\)
	where the generators \(x_{ij}, y_{ij}, z_{ij}\) lie in degree \(0\),
	\(\xi_{ij}, \theta_{ij}, \lambda_{ij}\) lie in degree \(-1\),
	and \(t_{ij}\) lies in degree \(-2\).
	Consider the matrix notations:
	\[
	X =(x_{ij}),\ Y = (y_{ij}),\ Z=(z_{ij}),\ \Xi=(\xi_{ij}),\ \Theta = (\theta_{ij}),\ \Lambda = (\lambda_{ij}),\ T = (t_{ij}).
	\]
	Then the differential on \(Q_N\) is expressed as follows:
	\begin{equation*}
		\pa \Xi = [Y,Z],\ \pa \Theta = [Z,X],\ \pa \Lambda = [X,Y],\ 
		\pa T = [X, \Xi] + [Y, \Theta] + [Z, \Lambda].
	\end{equation*}
\end{example}

Under this explicit presentation,
double Poisson brackets and noncommutative Hamiltonian structures induce classical counterparts on the representation schemes.
\begin{proposition}\label{prop: noncom ham to ham}
	Let \((A, \pa, \ldb-,-\rdb,\bw)\) be a noncommutative Hamiltonian algebra. Let \(N \in \bbN\).
	Then the Poisson bracket on $A_N$ is given by
	\begin{equation}\label{for: double poisson to poisson}
		\{a_{ij},b_{uv}\}
		=\ldb a,b\rdb'_{u j}\,\ldb a,b\rdb''_{i v},
	\end{equation}
	for \(a,b\in A\).
	Furthermore, there is a moment map on \(A_N\)
	\[
	\mu:  \gl_N (\bbK) \to A_N,\qquad \e_{ij} \tos \bw_{ji}.
	\]
\end{proposition}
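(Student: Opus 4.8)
The plan is to deduce both assertions from Van den Bergh's realization of double brackets on representation schemes \cite{Van2008Double}, transported to the present graded, augmented setting, exactly in the spirit in which Proposition~\ref{prop: CBEG} was obtained. \emph{For the first assertion}, since $A_N$ is generated by the matrix coefficients $\{a_{ij}\mid a\in A,\ i,j=1,\dots,N\}$, I would take (\ref{for: double poisson to poisson}) as the definition on generators and extend by the graded Leibniz rule. The real content is that this prescription is well defined and Poisson. Well-definedness amounts to compatibility with the relations $(aa')_{ij}=\sum_k a_{ik}a'_{kj}$ and $1_{ij}=\delta_{ij}$ defining $A_N=(\sqrt[N]{A})_{\ab}$; this is forced by axiom~(2) of Definition~\ref{def: dpois bracket} (that $\mH_a=\ldb a,-\rdb$ is a double derivation) together with the antisymmetry~(1), which together control how the two factors $\ldb a,b\rdb'$ and $\ldb a,b\rdb''$ transform under multiplication. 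The graded antisymmetry of $\{-,-\}$ on $A_N$ is inherited from~(1), the Jacobi identity from the double Jacobi identity~(3), and compatibility with $\pa$ from~(4); the one genuinely new feature relative to \cite{Van2008Double} is the systematic bookkeeping of the Koszul signs introduced by the shifts $\s^n$, which I would record once and then propagate.

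\emph{For the moment map}, recall that throughout this subsection $S=\bbK$, so $\bw\in A$ and the noncommutative moment-map relation of Definition~\ref{def: nc moment map} reads $\ldb\bw,p\rdb=p\otimes 1-1\otimes p$ for every $p\in A$. The classical moment-map axiom asks that $\mu:\gl_N(\bbK)\to A_N$ be linear, $\GL_N(\bbK)$-equivariant, and satisfy $\{\mu(\xi),f\}=\xi\cdot f$, where $\xi\cdot(-)$ denotes the fundamental vector field of the conjugation action $\rho\mapsto g\rho g^{-1}$. Because $\{-,-\}$ obeys the Leibniz rule and $A_N$ is generated by matrix coefficients, it suffices to test this identity on the generators $p_{uv}$.

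\emph{The key step} is then to substitute $\ldb\bw,p\rdb=p\otimes 1-1\otimes p$ into (\ref{for: double poisson to poisson}) with $a=\bw$. A direct computation gives $\{\bw_{ij},p_{uv}\}=p_{uj}\,\delta_{iv}-\delta_{uj}\,p_{iv}$, so that with $\mu(\e_{kl})=\bw_{lk}$ one obtains $\{\mu(\e_{kl}),p_{uv}\}=p_{uk}\,\delta_{lv}-\delta_{uk}\,p_{lv}$. This is precisely, up to the standard sign convention, the value on $p_{uv}$ of the fundamental vector field of the infinitesimal conjugation $\rho_p\mapsto[\e_{kl},\rho_p]$, since $([\e_{kl},\rho_p])_{uv}=\delta_{uk}p_{lv}-p_{uk}\delta_{lv}$. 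The index transposition built into $\e_{kl}\mapsto\bw_{lk}$ is exactly what aligns the two sides; equivariance of $\mu$ and the closure relation $\{\mu(\xi),\mu(\eta)\}=\mu([\xi,\eta])$ follow from the same computation together with the linearity of $\mu$.

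\emph{The main obstacle} I anticipate is the well-definedness of the induced bracket, that is, showing that (\ref{for: double poisson to poisson}) descends through the commutator quotient defining $A_N$ to a genuine biderivation; this is where the double-derivation axiom must be invoked with care and where the Koszul signs are most delicate. Once the bracket is in hand, the moment-map verification reduces to the finite index computation above.
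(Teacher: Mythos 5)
Your proposal is correct and coincides in substance with the paper's proof, which consists entirely of citing \cite[Proposition 7.11.1]{Van2008Double} and \cite[Theorem 6.4.3]{CBEG2007}: your plan (define the bracket on the generators $a_{ij}$, derive well-definedness and the biderivation property from axioms (1)--(2), Jacobi from the double Jacobi identity, then verify $\{\bw_{ij},p_{uv}\}=p_{uj}\delta_{iv}-\delta_{uj}p_{iv}$ and the closure relation $\{\mu(\xi),\mu(\eta)\}=\mu([\xi,\eta])$ on generators) is precisely the content of those cited proofs. The one residual point, which you already flagged, is purely conventional: with the action $\rho\mapsto g\rho g^{-1}$ one finds $\{\mu(\e_{kl}),p_{uv}\}=-([\e_{kl},\rho_p])_{uv}$, a sign absorbed by the moment-map convention of \cite{CBEG2007}.
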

\begin{proof}
	A proof can be found in \cite[Proposition 7.11.1]{Van2008Double} and \cite[Theorem 6.4.3]{CBEG2007}.
\end{proof}

Note that Proposition \ref{prop: derived VG} implies \(\rmH^0 ( \bfL (A)_N^{\GL}) \cong (A)_N^{\GL}\). The following recovers results from \cite{Cra2011,Van2008Double}.
\begin{proposition}[Berest-Chen-Eshmatov-Ramadoss]\label{prop: CB H_0}
	Let \(A\) be an ordinary NC Poisson algebra.
	For any $N \in \bbN$, there is a unique graded Poisson algebra structure on \(\rmH^\bullet( \bfL (A)_N ^{\GL})\) such that
	\[
	\{\Tr_N \bar a, \Tr_N \bar b\} : =\Tr_N \{ \bar a, \bar b\},\ \text{for any $a,\ b \in \rHC_\bullet(A).$} 
	\]
\end{proposition}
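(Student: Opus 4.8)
The plan is to construct the graded Poisson algebra structure on $\rmH^\bullet(\bfL(A)_N^{\GL})$ by transporting the NC Poisson bracket from $A_\natural$ through the trace map, and then to verify that the prescribed formula $\{\Tr_N \bar a, \Tr_N \bar b\} := \Tr_N\{\bar a, \bar b\}$ extends uniquely to a well-defined graded Poisson bracket. First I would recall that, by Proposition \ref{prop: CB H_0}'s hypothesis, $A$ carries an NC Poisson structure, i.e.\ a dg Lie bracket $\{-,-\}$ on $A_\natural$ whose adjoint action is induced by derivations $H_a \colon A \to A$. By Proposition \ref{prop: derived VG} we have $\rmH^\bullet(\bfL(A)_N^{\GL}) \cong \rmH^\bullet(\bfL(A)_N)^{\GL_N(\bbK)}$, and the relevant piece of the cyclic theory is that the trace maps $\Tr_N \colon \rHC_\bullet(A) \to \rmH^\bullet(\bfL(A)_N)^{\GL_N(\bbK)}$ assemble into a map whose image generates the invariant cohomology as a commutative algebra.

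The key steps, in order, are as follows. First I would establish well-definedness of the bracket on generators: the NC Poisson bracket descends to $\rHC_\bullet(A)$ (this is the graded Lie structure carried by reduced cyclic homology), and the trace map $\Tr_N$ sends this Lie bracket to a candidate bracket on the traces $\Tr_N \bar a$ inside the invariant subalgebra. Concretely, one checks that $\{\Tr_N \bar a, \Tr_N \bar b\} = \Tr_N\{\bar a,\bar b\}$ is independent of the chosen representatives $a, b$, using that $\{-,-\}$ already descends past $S + [A,A]$ on each factor. Second, I would extend the bracket from traces to the whole invariant algebra $\rmH^\bullet(\bfL(A)_N)^{\GL_N(\bbK)}$ by imposing the Leibniz rule in both arguments; here the essential input is that the traces $\{\Tr_N \bar a\}$ generate the invariant subalgebra (a consequence of classical invariant theory for $\GL_N$, together with the fundamental theorem identifying invariants with traces of words), so that a derivation-type bracket is determined by its values on generators. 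Third, I would verify that this extension is \emph{consistent}, i.e.\ that the relations among the generators $\Tr_N \bar a$ are respected by the bracket; this is where the fact that $\mathrm{ad}_{\bar a}$ is induced by an honest derivation $H_a$ of $A$ becomes crucial, since $H_a$ induces a genuine derivation on $A_N$ and hence on the invariants, guaranteeing compatibility with products and relations. Finally I would check the graded Jacobi identity and graded antisymmetry for the extended bracket, both of which reduce on generators to the corresponding identities for the Lie bracket on $\rHC_\bullet(A)$, and propagate to the whole algebra by the Leibniz rule.

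The main obstacle I anticipate is precisely the third step: showing that the bracket, defined on the generating traces and extended by Leibniz, is genuinely well-defined as a map on the quotient algebra, i.e.\ that it kills the ideal of relations among the traces. The cleanest route is not to work relation-by-relation but to exhibit, for each class $\bar a$, an explicit \emph{Hamiltonian derivation} $\widehat{H}_a$ on $\rmH^\bullet(\bfL(A)_N)$ that implements $\{\Tr_N \bar a, -\}$; since $H_a$ is an $S$-linear derivation of $A$, applying the functor $(-)_N$ yields a derivation of $A_N$ commuting with the $\GL_N(\bbK)$-action, hence restricting to the invariants, and one identifies its action on a trace $\Tr_N \bar b$ with $\Tr_N\{\bar a, \bar b\}$ via the explicit matrix-coefficient formula (compare the double-Poisson computation \eqref{for: double poisson to poisson}). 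The antisymmetry $\{\Tr_N\bar a,\Tr_N \bar b\} = -\one^{|\s^n \bar a||\s^n \bar b|}\{\Tr_N \bar b, \Tr_N \bar a\}$ then forces the pairing to be a well-defined biderivation, and uniqueness follows immediately because any graded Poisson bracket agreeing with the prescribed formula on a generating set must coincide with it everywhere by the Leibniz rule.
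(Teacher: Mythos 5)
The paper itself gives no proof of this proposition: it is quoted from \cite{BCER2012Non} (and is said to recover \cite{Cra2011,Van2008Double}), so your attempt can only be measured against the argument of that reference. Your sketch is essentially correct in homological degree zero, where it reproduces Crawley-Boevey's proof: $(A_N)^{\GL_N(\bbK)}$ is generated by traces by the first fundamental theorem of invariant theory, the Hamiltonian derivations $H_a$ pass through $(-)_N$ to $\GL_N(\bbK)$-equivariant derivations implementing $\{\Tr_N\bar a,-\}$, and uniqueness then follows from the Leibniz rule.

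At the derived level, however, which is the actual content of the graded statement, there are two genuine gaps. First, you assert that the image of $\Tr_N$ generates $\rmH^\bullet(\bfL(A)_N)^{\GL_N(\bbK)}$ as a commutative algebra. What invariant theory provides is generation at the \emph{chain} level: for a cofibrant (semi-free) resolution $Q\cof A$, the underlying graded algebra of $(Q_N)^{\GL_N(\bbK)}$ is generated by traces of cyclic words in $Q$. A surjection of complexes need not be surjective on cohomology, and surjectivity of the symmetrized trace map onto representation homology is a delicate question in its own right (it is the subject of the derived Harish--Chandra homomorphism literature) and has no reason to hold for an arbitrary ordinary algebra; so both your uniqueness argument and your Leibniz-rule extension of the bracket are unsupported as written. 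Second, your Hamiltonian derivation $\widehat H_a=(H_a)_N$ is a derivation of the \emph{underived} algebra $A_N$, whereas the bracket must be produced on the complex $(Q_N)^{\GL_N(\bbK)}$ computing $\bfL(A)_N^{\GL}$; even the graded Lie bracket on the full $\rHC_\bullet(A)$ appearing in the statement is not visible on $A_\natural$ (which, for an ordinary algebra, sees only $\rHC_0$) but requires lifting the NC Poisson data to $Q$. The correct route, as in \cite{BCER2012Non}, is to lift the bracket and the derivations to the resolution, construct a dg Poisson structure on $(Q_N)^{\GL_N(\bbK)}$ at the chain level, where trace generation genuinely holds and the Leibniz extension is legitimate, and only then pass to cohomology, interpreting the uniqueness assertion accordingly. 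Your proposal, by working entirely on cohomology and on the underived $A_N$, skips precisely the steps that make the graded statement nontrivial.
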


\subsection{Higher traces}\label{subsec: trace}

The trace map establishes a correspondence from noncommutative algebras to functions on their representation schemes.
This correspondence extends to modules over algebras, connecting noncommutative geometry with geometry on representation spaces.

First, we recall the Van den Bergh functor as introduced in \cite{Van2008Quasi, BKR2013Der}. Van den Bergh proposed this functor as a means to connect noncommutative geometry with sheaf theory on representation schemes. Subsequently, Berest et al. extended this construction into the derived setting.

For a dg algebra \(A \in \dga_S\) and dimension vector \(\bd\),
the Van den Bergh functor is
\begin{equation}\label{for: vdb func}
(-)_\bd :\ \Mod(A^{e}) \to \Mod(A_\bd), \qquad M \tos  M_\bd \cong \big(\gl(\bbK^\bd) \otimes A_\bd\big) \otimes_{A^{e}} M.
\end{equation}

The following lemma is the module-theoretic analogue of Proposition \ref{prop: alg duality}.
\begin{lemma}
	Let \(A \in \dga_S\).
	Let \(\bd\) be a dimension vector.
	For any $M \in \Mod(A^{e})$, and
	$L \in \Mod(A_\bd)$, there is a canonical isomorphism of complexes
	\begin{equation}\label{for: mod duality}
		\dgHom_{A_\bd} \big(M_\bd,\,L\big)
		 \cong \dgHom_{A^{e}} \big(M,\;\gl(\bbK^\bd)\otimes L\big).
	\end{equation}
\end{lemma}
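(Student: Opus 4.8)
The plan is to recognize the Van den Bergh functor $(-)_\bd$ as tensoring with a bimodule and then to apply the tensor–hom adjunction, identifying the resulting right adjoint by means of the trace pairing on $\gl(\bbK^\bd)$. This mirrors, at the level of modules, the algebra-level duality of Proposition \ref{prop: alg duality}.

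First I would package the source of the functor as a bimodule. The universal representation furnishes a dg algebra homomorphism $\pi\colon A \to \gl(\bbK^\bd)\otimes A_\bd$, $a \tos \sum_{i,j}\e_{ij}\otimes a_{ij}$, which identifies $P:=\gl(\bbK^\bd)\otimes A_\bd$ with the algebra of matrices over $A_\bd$. Through $\pi$ (and its opposite), $P$ acquires an $A$-bimodule structure, $(a\otimes b)\cdot(X\otimes c)=\pi(a)\,(X\otimes c)\,\pi(b)$, commuting with its evident right $A_\bd$-action by multiplication on the second factor; thus $P$ is a dg $A^e$–$A_\bd$-bimodule. By (\ref{for: vdb func}), the Van den Bergh functor is then exactly $(-)_\bd = -\otimes_{A^e}P$.

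With this in hand, I would invoke the standard tensor–hom adjunction for the bimodule $P$, valid verbatim for dg modules since every structure map is a chain map:
\[
\dgHom_{A_\bd}\bigl(M\otimes_{A^e}P,\,L\bigr)\;\cong\;\dgHom_{A^e}\bigl(M,\,\dgHom_{A_\bd}(P,\,L)\bigr),
\]
where the left $A^e$-module structure on $\dgHom_{A_\bd}(P,L)$ is transposed from the $A$-bimodule structure on $P$. It then remains to identify $\dgHom_{A_\bd}(P,L)$ with $\gl(\bbK^\bd)\otimes L$ as a dg $A^e$-module. Since $P$ is free over $A_\bd$ on the finite-dimensional space $\gl(\bbK^\bd)$, evaluation yields a canonical isomorphism of complexes $\dgHom_{A_\bd}(P,L)\cong\gl(\bbK^\bd)^{\vee}\otimes L$, and the nondegenerate symmetric trace pairing $(X,Y)\mapsto\Tr(XY)$ supplies a canonical, $\GL_\bd(\bbK)$-equivariant identification $\gl(\bbK^\bd)^{\vee}\cong\gl(\bbK^\bd)$. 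Composing these two steps produces the asserted isomorphism.

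The main obstacle I expect is the bookkeeping of the module structures: I must check that, after transporting along the trace identification, the $A^e$-action on $\gl(\bbK^\bd)\otimes L$ coming out of the adjunction coincides precisely with the canonical $A^e$-action determined by $\pi$, so that the abstract adjunction isomorphism is the canonical map named in the statement. Once the bimodule $P$ and the trace pairing are set up correctly, the remaining verifications—naturality in both $M$ and $L$, and compatibility with the differentials—are formal, since $\gl(\bbK^\bd)$ is concentrated in degree zero and all maps in sight are morphisms of complexes.
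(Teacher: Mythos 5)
Your proof is correct: the paper in fact states this lemma without proof (presenting it as the module-theoretic analogue of Proposition \ref{prop: alg duality} and implicitly deferring to \cite{Van2008Quasi, BKR2013Der}), and your tensor--hom adjunction argument via the dg $A^{e}$--$A_\bd$-bimodule $P=\gl(\bbK^\bd)\otimes A_\bd$, followed by $\dgHom_{A_\bd}(P,L)\cong\gl(\bbK^\bd)^{\vee}\otimes L\cong\gl(\bbK^\bd)\otimes L$, is exactly the standard argument behind the statement. The bookkeeping obstacle you flag resolves as you expect: cyclic invariance of the trace pairing, $\Tr(ZXY)=\Tr(XYZ)$, makes trace duality intertwine left with right multiplication on $\gl(\bbK^\bd)$, which precisely undoes the side-switch performed by the contravariant $\dgHom_{A_\bd}(P,-)$, so the transported $A^{e}$-action on $\gl(\bbK^\bd)\otimes L$ agrees with the canonical one induced by the universal representation $\pi$.
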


Next, we recall the trace map for bimodules:
\begin{equation}\label{for: trace formula}
	\Tr_\bd: M \to \gl_\bd (\bbK) \otimes M_\bd \to M_\bd.
\end{equation}
The first arrow is the canonical map corresponding to the identity under the isomorphism (\ref{for: mod duality}) for \(L = M_\bd\);
the second arrow sends a matrix
to its trace.

According to \cite[Lemma 3.2.2]{Van2008Quasi}, for any \(M \in \Mod(A^e)\), 
\((T_A M)_\bd \simeq \Sym_{A_\bd} (M)_\bd\).
Thus, for any \(n\), taking \(M=\dgDDer_S(A)\) yields
\((\Ts{n}_{  A})_\bd \simeq \Pol(A_\bd; n-1)\).
We now show that shifted double Poisson structures satisfy the Kontsevich--Rosenberg principle.

\begin{proposition}\label{prop: nc pois to pois}
	Let \(A \in \dga_S\) be homologically smooth.
	Let \(n \in \bbZ\).
	For any dimension vector \(\bd\),
	there is a canonical morphism  
	\[
	\dpois(A; n)\to \pois\Bigl( A_\bd ; n\Bigr).
	\]
\end{proposition}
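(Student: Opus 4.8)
The plan is to obtain the map as the one induced on Maurer--Cartan spaces by the trace morphism of Section~\ref{subsec: trace}, after checking that this trace is a morphism of dg Lie algebras carrying the double Schouten--Nijenhuis bracket to the Schouten--Nijenhuis bracket on polyvector fields. First I would fix a cofibrant resolution $Q \cof A$, so that by definition $\dpois(A; n) = \underline{\MC}_\bullet\bigl(F^2\TTs{n+1}_{Q,\,cyc}[n+1]\bigr)$. Since $(-)_\bd : \dga_S \to \cdga_\bbK$ is left Quillen by Proposition~\ref{prop: drep quillen}, it sends $Q$ to a cofibrant model $(Q)_\bd$ computing the derived representation scheme $\bfL(A)_\bd$; accordingly I read $\pois(A_\bd; n)$ as $\underline{\MC}_\bullet\bigl(F^2\widehat{\Pol}((Q)_\bd; n)[n+1]\bigr)$, computed through this model.

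Next I would invoke the identification $(\Ts{n+1}_Q)_\bd \simeq \Pol((Q)_\bd; n)$ recalled after \cite[Lemma 3.2.2]{Van2008Quasi} and compose it with the cyclic quotient and the trace map $\Tr_\bd$ of \eqref{for: trace formula}. This produces a degree-zero chain map
\[
\Tr_\bd : \Ts{n+1}_{Q,\,cyc} \longrightarrow \Pol((Q)_\bd; n)
\]
whose image consists of $\GL_\bd(\bbK)$-invariant polyvector fields. Because $\Tr_\bd$ is assembled weight by weight from the Van den Bergh functor, it preserves the weight gradings of \eqref{for: weight grading}, hence maps the filtration \eqref{for: wei fil on nc cot} into the corresponding filtration on polyvector fields, extends to the completions, and after the shift $[n+1]$ restricts to a map
\[
\Tr_\bd : F^2\TTs{n+1}_{Q,\,cyc}[n+1] \longrightarrow F^2\widehat{\Pol}((Q)_\bd; n)[n+1].
\]

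The heart of the argument, and the step I expect to be the main obstacle, is to verify that this $\Tr_\bd$ is a morphism of dg Lie algebras, i.e.\ that it intertwines the double Schouten--Nijenhuis bracket on $\TTs{n+1}_{Q,\,cyc}$ with the Schouten--Nijenhuis bracket on $\widehat{\Pol}((Q)_\bd; n)$. Both brackets obey a Leibniz rule and both target algebras are generated by functions in weight $0$ and by (double) derivations in weight $1$, so it is enough to check the identity on these generators. On generators the double Schouten--Nijenhuis bracket is given explicitly by \eqref{for: dpoiss on nc cot}; transporting it through the matrix-coefficient presentation of Section~\ref{subsec: present} and the trace, its image is exactly the bracket of \eqref{for: double poisson to poisson} in Proposition~\ref{prop: noncom ham to ham} (equivalently \cite[Proposition 7.11.1]{Van2008Double}), which is precisely the canonical Schouten--Nijenhuis pairing between functions and vector fields on $\Spec (Q)_\bd$. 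Compatibility with the differentials holds since $\Tr_\bd$ is a chain map, so the bracket identity propagates from generators to all of the completed polyvector fields by the Leibniz rule.

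Finally, a morphism of pronilpotent dg Lie algebras induces a morphism of Maurer--Cartan spaces, functorially and levelwise after tensoring with $\Omega(\Delta_\bullet)$ and passing to the limit over the cofiltration; applied to the map above this yields the canonical morphism $\dpois(A; n) \to \pois(A_\bd; n)$, which by Proposition~\ref{prop: mc for dpois} sends a double Poisson structure $P$ with $\tfrac{1}{2}\{P,P\}=0$ to a solution of the shifted Poisson Maurer--Cartan equation. Its independence of the auxiliary resolution follows from Theorem~\ref{thm: dpoiss not reso} together with the corresponding invariance of the derived representation scheme, and naturality in $\bd$ is clear from the construction.
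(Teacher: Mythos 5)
Your proposal is correct and follows essentially the same route as the paper: both construct the map by showing the trace $\Tr_\bd$ of \eqref{for: trace formula} is a morphism of (completed, filtered) dg Lie algebras from $\TTs{n+1}_{Q,\,cyc}[n+1]$ to $\widehat{\Pol}(Q_\bd;n)[n+1]$ and then pass to Maurer--Cartan spaces. The only cosmetic difference is that the paper invokes \cite[Proposition 7.6.1]{Van2008Double} to get the matrix-coefficient identity $\{X_{ij},Y_{uv}\}=\ldb X,Y\rdb'_{uj}\ldb X,Y\rdb''_{iv}$ for arbitrary polyvector fields and computes $\{\Tr_\bd X,\Tr_\bd Y\}=\Tr_\bd\{X,Y\}$ in one stroke, whereas you verify the bracket compatibility on generators via \eqref{for: dpoiss on nc cot} and Proposition~\ref{prop: noncom ham to ham} and propagate by the Leibniz rule --- the same underlying mechanism.
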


\begin{proof}
	Let \(Q \cof A\) be a cofibrant resolution.
	We show that there is a morphism of dg Lie algebras
	\[
	\TTs{n+1}_{  Q,\,cyc}[n+1] \to \widehat{\Pol}\Bigl(Q_\bd; n\Bigr)[n+1].
	\]
	According to \cite[Proposition 7.6.1]{Van2008Double}, the double Schouten--Nijenhuis bracket on the shifted noncommutative cotangent bundle and the Schouten--Nijenhuis bracket on shifted polyvector fields on \(\DRep_\bd^{  Q}\) are related by the following formula: for any \(X,Y\in\TTs{n+1}_{  Q,\,cyc}[n+1]\),
	\begin{equation}
		\{X_{i j}, Y_{u v}\} =  \ldb X, Y\rdb^{'} _{u j} \ldb X, Y\rdb^{''} _{i v}.
	\end{equation}
	Using (\ref{for: trace formula}), we have
	\begin{align*}
		\{\Tr_\bd (X), \Tr_\bd(Y)\} & = \sum_{i,j} \{X_{i i}, Y_{j j}\}\\
		&=\sum_{i,j} \ldb X, Y\rdb^{'} _{j i} \ldb X, Y\rdb^{''} _{i j}\\
		&= \sum_{i}\{X, Y\}_{i i}\\
		&= \Tr_\bd(\{X, Y\}).
	\end{align*}
	
	Thus, the trace maps preserve dg Lie algebra structures.
	Therefore, at the level of moduli, we obtain a natural simplicial map
	\[
	{\Tr}_\bd: \dpois(Q; n)\to \pois\Bigl(Q_\bd ; n\Bigr).
	\]
\end{proof}

We end this subsection by presenting an explicit correspondence in the quiver setting.
\begin{example}\label{eg: nc poi to pois}
	Consider the Jordan quiver $Q$. Its doubled quiver $\DQ$  is as follows:
	\begin{equation*}
		\begin{tikzpicture}[x={(2cm,0cm)}, y={(0cm,2cm)}, baseline=0cm]
			\node[draw, circle, fill=white] (V) at (0,0) {$e$};
			\draw[->, looseness=5] (V.240) to[out=210,in=150] (V.120);
			\node at (-0.73,0) {\scriptsize $a$};
			\draw[->, looseness=5] (V.300) to[out=330,in=30] (V.60);
			\node at (0.73,0) {\scriptsize $a^\ast$};
		\end{tikzpicture}
	\end{equation*}
	The double Poisson bivector field is
	\[
	P_{\DQ} = - \si\D_{a} \otimes \si \D_{a^\ast} \in \Ts{1}_{\QQ}.
	\]
	Let \(N\) be a positive integer. Since \(\QQ\) is cofibrant, there is no need to choose a further cofibrant resolution. Then
	\[
	\Tr_N \big( - \si\D_{a} \otimes \si \D_{a^\ast} \big) = \sum_{k,l = 1} ^{N} -\si (\D_a)_{kl} \wedge \si (\D_{a^\ast})_{lk}.
	\]
	$\Tr_N(P_{\DQ})$ is the canonical Poisson bivector field on $\gl_N \times \gl_N$:
	\begin{align*}
		\Tr_N (P_{\DQ}) (a^{\ast } _{ij}, a_{uv}) & =\sum_{k,l = 1} ^{N} - i_{\si (\D_a)_{kl}} i_{\si (\D_{a^\ast})_{lk}} (d a^{\ast}_{ij} da_{uv})\\
		&= - \sum_{k,l = 1} ^{N}\delta_{i,k} \delta_{l, j}\delta_{u, l} \delta_{k, v}\\
		&= - \delta_{u,j} \delta_{i,v}.
	\end{align*}
\end{example}

\subsection{Relation with stacks of representations}\label{subsec: stack}

In Yeung's work \cite{Yeu2022Pre, Yeu2022Shi}, a pre-Calabi-Yau structure on a dg algebra induces a shifted Poisson structure on derived stacks of representations; on the other hand, pre-Calabi-Yau structures are known to be closely related to double Poisson structures.
It is natural to ask whether double Poisson structures directly induce shifted Poisson structures on derived stacks of representations.

First, we recall shifted Poisson structures on derived  stacks of representations.
Since a derived moduli stack of representations is a derived quotient stack, To\"en proposed the following model for its cotangent complex (see \cite{Toe2014Der} for details and references).
Yeung studied the shifted symplectic/Poisson structures in \cite{Yeu2022Shi}.
Let \(\rmG\) be a smooth reductive group scheme acting on a smooth derived affine scheme \(Y=\Spec R\), and let \(X:=[Y/\rmG]\) be the derived quotient stack.
The pullback to \(Y\) of the cotangent complex \(\mathbb L_{X}\) is the homotopy fiber \(\bbL\) of the natural morphism
\[
\rho: \bbL_{Y} \to \mathscr{O}_Y \otimes \mathfrak{g}^{\vee},
\]
which is dual to the infinitesimal action of \(\rmG\) on \(Y\). Since \(\rmG\) acts on \(Y\), it also acts on \(\bbL\).
By definition, in \(\Ho(\Mod(R))\),
\(\bbL\) is equivalent to the complex
\begin{equation*}
	\mathrm{cone}\Big(\mathbb{L}_{Y} \stackrel{\rho}{\to} \mathscr{O}_Y \otimes \mathfrak{g}^{\ast}\Big)[-1].
\end{equation*}

Assume \(R\) is almost cofibrant, meaning that the \(\ka\) differential \(\KA_{\rm com}(R)\in\Mod(R)\) is cofibrant and \(\KA_{\rm com}(R)\to\bbL_{\bbK \bcs R}\) is a quasi-isomorphism. In \cite{Yeu2022Shi}, the model for shifted polyvector fields on \(X=[Y/\rmG]\) is:
\begin{align*}\label{for: pol on drep}
	\Pol^r_{\rm Car}(X; n) := & \bigoplus_{p+q = r} \Bigl(\dgHom_{R} \big(\Sym_R (\KA_{\rm com}(R)[n+1]), R\big)\otimes {\rm CoSym}^q\bigl(\fkg[-n]\bigr)\Bigr)^{\rmG}\\
	= & \bigoplus_{p+q = r}  \Big( \Pol(R ;  n) \otimes {\rm CoSym}^q\bigl(\fkg[-n]\bigr) \Big)^{\rmG}.
\end{align*}
The Lie bracket on 
\(\Pol_{\rm Car}(X; n)\)
is defined by
\begin{equation}\label{for: Lie on polcar}
	\{-, -\} := \{-,-\}_{SN} \otimes \xi,
\end{equation}
where \(\{-,-\}_{SN}\) is the Schouten--Nijenhuis bracket on \(\Pol(R ; n)\) and \(\xi\) is the shuffle product on \({\rm CoSym}(\fkg[-n])\).
Under this model, shifted Poisson structures on derived quotient stacks are given as follows.
Let 
\(\widehat{\Pol}_{\rm Car}(X; n)[n+1] := \prod_{r \geq 0} \Pol^r_{\rm Car}(X; n)[n+1]\)
be the completion with the weight filtration: for any \(k \in \bbZ_{\geq 0}\),
\[
F^k\widehat{\Pol}_{\rm Car}(X; n)[n+1] := \prod_{i \geq 0} \Pol^{k+i}_{\rm Car}(X; n) [n+1].
\]

By the Maurer--Cartan formalism, one has
\begin{definition}
	Let \(R \in \cdga_\bbK\) be derived finitely presented. Let \(\rmG\) be a reductive group scheme acting on \(Y = \Spec R\).
	The \emph{Cartan model of the space of \(n\)-shifted Poisson structures on the derived quotient stack \([Y/\rmG]\)} is 
	\[
	\pois_{\rm Car}(X; n) = \underline{\MC}_\bullet \Bigl(F^2\widehat{\Pol}_{\rm Car}(X; n)[n+1]\Bigr).
	\]
\end{definition}

%

Now, fix a cofibrant resolution \(\pi : Q \cof A\) in \(\dga_S\),
and apply the above analysis to the case of the derived moduli stack of representations
\[
\drep^{ A}_{\bd} := [\DRep^{  Q}_{\bd} / \GL_\bd].
\]
Then \(\bbL = \cone\Bigl( \bbL_{\DRep^{  Q}_{\bd}} \stackrel{\rho}{\to} Q_\bd \otimes \gl^\ast _\bd \Bigr)[-1].\)
A key observation in Yeung's work \cite{Yeu2022Pre} is that the noncommutative cotangent complex of \(A\) should be 
\(\crS_{  A}[-1]\),
where \(\crS_{  A} := \cone(\KA(  A) \to A \otimes A).\)
Applying the derived Van den Bergh functor to \(\crS_{  A}[-1]\), one obtains an isomorphism in \(\Ho(\Mod(Q_\bd))\):
\(\bfL (\, \bfR\pi^\ast\crS_{  A}[-1])_\bd \simeq \bbL.\)

As mentioned at the beginning of this section,
Yeung \cite{Yeu2022Pre} proved that pre-Calabi-Yau structures
induce shifted Poisson structures on derived stacks of representations; in particular, he showed that a shifted double Poisson structure induces a pre-Calabi-Yau structure in \cite[Section 3.2]{Yeu2022Pre}.
The trace compatibility used below is a derived analogue of
Van den Bergh's compatibility between the double Schouten--Nijenhuis
bracket and the Schouten--Nijenhuis bracket on representation schemes.

\begin{proposition}\label{prop: dpois to pois on stack}
	Let $A \in \dga_S$ be homologically smooth.
	Then for any $n \in \bbZ$,
	there is a natural morphism 
	\[
	\dpois(A; n) \rightarrow \pois_{\rm Car}(\drep^{ A}_\bd; n).
	\]
\end{proposition}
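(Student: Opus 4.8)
The plan is to build a morphism of weight-filtered dg Lie algebras from $\TTs{n+1}_{Q,\,cyc}[n+1]$ to $\widehat{\Pol}_{\rm Car}(\drep^A_\bd; n)[n+1]$, for a fixed cofibrant resolution $Q \cof A$, carrying $F^2$ into $F^2$, and then apply the Maurer--Cartan functor $\underline{\MC}_\bullet$; its functoriality immediately produces the desired simplicial map once the definitions of $\dpois(A; n)$ and $\pois_{\rm Car}(\drep^A_\bd; n)$ are unwound. Homological smoothness of $A$ permits choosing $Q$ so that Proposition \ref{prop: nc pois to pois} applies and supplies the trace morphism of dg Lie algebras
\[
\Tr_\bd: \TTs{n+1}_{Q,\,cyc}[n+1] \to \widehat{\Pol}(Q_\bd; n)[n+1].
\]
The remaining task is to factor this through the Cartan model of the quotient stack $\drep^A_\bd = [\DRep^Q_\bd / \GL_\bd]$, whose tangent data is governed by the cone $\bbL = \cone\bigl(\bbL_{\DRep^Q_\bd} \stackrel{\rho}{\to} Q_\bd \otimes \gl^\ast_\bd\bigr)[-1]$, the identification $\bfL(\bfR\pi^\ast\crS_{A}[-1])_\bd \simeq \bbL$ guaranteeing that $\widehat{\Pol}_{\rm Car}$ is the correct model.

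First I would locate the image of $\Tr_\bd$ inside $\widehat{\Pol}_{\rm Car}$. Since the trace is conjugation invariant, the image lies in the $\GL_\bd$-invariants $\widehat{\Pol}(Q_\bd; n)^{\GL_\bd}$; moreover, being produced by the Van den Bergh functor applied to $\dgDDer_S(Q)$, it involves only the scheme directions of $\DRep^Q_\bd$ and none of the extra gauge directions $\gl_\bd$. The cone presentation of $\bbL$ induces the bigrading of $\Pol_{\rm Car}$ by a polyvector weight $p$ and a ${\rm CoSym}$-weight $q$, and the summand with $q=0$ is exactly
\[
\bigl(\Pol(Q_\bd; n) \otimes {\rm CoSym}^0(\gl_\bd[-n])\bigr)^{\GL_\bd} = \Pol(Q_\bd; n)^{\GL_\bd},
\]
since ${\rm CoSym}^0(\gl_\bd[-n]) = \bbK$. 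Thus $\Tr_\bd$ factors through the $q=0$ summand, and it remains to verify that this summand is a sub-dg-Lie-algebra whose inclusion into $\widehat{\Pol}_{\rm Car}$ preserves all structures.

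The crux is closure of the $q=0$ summand under both the bracket and the differential of the Cartan model. For the bracket, formula (\ref{for: Lie on polcar}) reads $\{-,-\} = \{-,-\}_{SN} \otimes \xi$; on classes supported in ${\rm CoSym}^0 = \bbK$ the shuffle factor $\xi$ is the identity, so the bracket reduces to the Schouten--Nijenhuis bracket and preserves $q=0$. For the differential, the Chevalley--Eilenberg--Koszul term induced by the infinitesimal action $\gl_\bd \to \bbT_{\DRep^Q_\bd}$ strictly lowers the ${\rm CoSym}$-weight $q$ (while raising $p$), hence it annihilates $q=0$, leaving only the internal differential of $\Pol(Q_\bd; n)$ — which coincides with the one already used in Proposition \ref{prop: nc pois to pois}. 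Therefore $q=0$ is the bottom step of a differential-compatible filtration, hence a subcomplex, and the inclusion $\Pol(Q_\bd; n)^{\GL_\bd} \hookrightarrow \widehat{\Pol}_{\rm Car}(\drep^A_\bd; n)$ is a morphism of dg Lie algebras. This verification — pinning down the Cartan differential in the dg, non-formal setting and confirming, with correct degree and sign conventions, that its action component strictly decreases $q$ so that $q=0$ is stable — is the step I expect to be the main obstacle.

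Finally, the composite
\[
\TTs{n+1}_{Q,\,cyc}[n+1] \stackrel{\Tr_\bd}{\longrightarrow} \widehat{\Pol}(Q_\bd; n)^{\GL_\bd}[n+1] \hookrightarrow \widehat{\Pol}_{\rm Car}(\drep^A_\bd; n)[n+1]
\]
is a dg Lie algebra morphism; it is weight preserving, since $\Tr_\bd$ sends polyvector weight $p$ to weight $p$ while the $q=0$ inclusion identifies weight $p$ with total weight $r=p$, so $F^2$ is carried into $F^2$. Applying $\underline{\MC}_\bullet$ to the resulting map $F^2\TTs{n+1}_{Q,\,cyc}[n+1] \to F^2\widehat{\Pol}_{\rm Car}(\drep^A_\bd; n)[n+1]$ yields the claimed natural morphism $\dpois(A; n) \to \pois_{\rm Car}(\drep^A_\bd; n)$; by Theorem \ref{thm: dpoiss not reso} the source is independent of the chosen resolution $Q$, so the construction is well defined.
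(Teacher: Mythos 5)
Your proposal is correct and follows essentially the same route as the paper: apply the trace map of Proposition \ref{prop: nc pois to pois}, observe that its image lands in the $\GL_\bd$-invariant polyvectors, identify these with the ${\rm CoSym}^0$ summand of the Cartan model (a dg Lie subalgebra, since the shuffle factor in (\ref{for: Lie on polcar}) is trivial there), and apply $\underline{\MC}_\bullet$ to the composite. Your explicit check that the action component of the Cartan differential strictly lowers the ${\rm CoSym}$-weight $q$, so that $q=0$ is a subcomplex, is a detail the paper's proof leaves implicit, but it is consistent with the model and does not change the argument.
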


\begin{proof}
	Let $Q \cof A$ be a cofibrant resolution.
	We show that there is a morphism in the category \(\DGLA^{\rm gr}\):
	\[
	\TTs{n+1}_{  Q,\,cyc}[n+1] \rightarrow \widehat{\Pol}_{\rm Car}\Bigl(\drep^{A}_\bd; n\Bigr)[n+1].
	\]
	In fact, by the definition of the Lie bracket (\ref{for: Lie on polcar}),
	\({\widehat{\Pol}\bigl(Q_\bd; n\bigr)^{\GL_\bd}[n+1]}\) is a dg Lie subalgebra of
	\(\widehat{\Pol}_{\rm Car}\Bigl(\drep^{ A}_\bd; n\Bigr)[n+1]\); moreover, the image of the trace map
	\[
	\Tr_\bd: \Ts{n+1}_{  Q,\,cyc}[n+1] \to \Pol\Bigl(Q_\bd; n\Bigr)[n+1]
	\]
	lies in the \(\GL_\bd (\bbK)\)-invariant part.
	Therefore, by Proposition \ref{prop: nc pois to pois}, the morphism
	\[
	\dpois(A; n) \rightarrow \pois_{\rm Car}(\drep^{  A}_\bd; n)
	\]
	is induced by the trace map followed by the embedding
	\[
	\widehat{\Pol}\bigl(Q_\bd; n\bigr)^{\GL_\bd }[n+1] \hookrightarrow \widehat{\Pol}_{\rm Car}\Bigl(\drep^{A}_\bd; n\Bigr)[n+1].
	\]
\end{proof}

\section{Noncommutative Poisson Extensions}\label{sec: nc poiss ext}

In this section we systematically examine extensions of noncommutative Poisson structures.
The goal of this section is to develop the mechanism that allows double Poisson and NC Poisson brackets to interact with the cobar--bar construction used in Section \ref{sec: s-bracket}.
We study criteria for extending double Poisson brackets on \(A\) to \(A\langle t\rangle\), for extending NC Poisson structures, and for relating these two extension problems. In particular, we show that noncommutative Poisson extensions are compatible with noncommutative Hamiltonian reduction and fit the Kontsevich--Rosenberg principle.

\subsection{Double Poisson algebra extension}\label{subsec: dpoiss ext}
Let $(A, \ldb-,-\rdb)$ be a dg double Poisson algebra.
We aim to extend the double bracket to the free product \(A\langle t\rangle := A\ast_S S[t]\), where the degree of \(t\) is not assumed to be zero.
By the Leibniz rule, such an extension is determined by the double derivation \(\ldb t, - \rdb\).
We therefore introduce the following definition.

\begin{definition}\label{def: dpoi der}
	Let \((A,\pa,\db)\) be a dg double Poisson algebra.
	A \emph{dg double Poisson derivation on \(A\)}
	is a double derivation \(\Theta\in\dgDDer_S(A)\) such that for any \(a, b \in A\),
	\begin{align*}
		\Theta (\ldb a,b \rdb')\otimes \ldb a,b \rdb'' = & 
		\one^{\kappa_1} \Theta(b)' \otimes \ldb a, \Theta(b)'' \rdb \\
		&  +\one^{\kappa_2} \ldb b, \Theta(a)' \rdb'' \otimes \Theta(a)'' \otimes \ldb b, \Theta(a)' \rdb'; \\
		\pa \cc \Theta = & \one^{|\Theta|} \Theta \cc \pa.
	\end{align*}
\end{definition}
Here, \(\kappa_1,\ \kappa_2\) are determined by the Koszul sign rule:
\begin{align*}
	\kappa_1 & = |\Theta| |\s^n a| + |\Theta(b)'| |\Theta(b)''| + |\Theta(b)'| |\mH_a (\Theta(b)'') |;\\
	\kappa_2 &= |\s^n b| (|\Theta| -n -|a|) + |\mH_b (\Theta(a)' )'| |\mH_b (\Theta(a)')'' \Theta(a)''| +1.
\end{align*}

Let \((A,\pa,\ldb-,-\rdb)\) be a dg double Poisson algebra of degree \(n\).
Assume \(t\) has degree \(n\) and let \(\Theta\) be a dg double derivation of degree zero.
We extend the dg double bracket to the free product \(A\langle t\rangle := A\ast_S S[t]\) by setting
\begin{enumerate}
	\item \(\ldb t,t \rdb := t\otimes 1-1\otimes t\);
	\item \(\ldb t,a \rdb := \Theta(a)\), for \(a\in A\).
\end{enumerate}
The following theorem provides a sufficient condition for this extension to be a dg double Poisson bracket.
\begin{theorem}\label{thm: double ext}
	Let \((A, \pa, \db)\) be a dg double Poisson algebra of degree \(n\).
	If \(\Theta\in\dgDDer_S(A)\) is a dg double Poisson derivation of degree zero on \(A\),
	then \(A\langle t; \Theta \rangle\) is a dg double Poisson algebra of degree \(n\). 
\end{theorem}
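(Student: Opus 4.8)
The plan is to check the four axioms of Definition~\ref{def: dpois bracket} for the bracket determined on $A\langle t\rangle$ by the rules $\ldb t,t\rdb=t\otimes1-1\otimes t$, $\ldb t,a\rdb=\Theta(a)$, and the original bracket on $A$, all extended so that each $\mH_x=\ldb x,-\rdb$ is a double derivation. With this extension axiom~(2) holds by construction, so the work lies in antisymmetry, compatibility with $\pa$, and the double Jacobi identity, each of which I would reduce to a statement about the algebra generators of $A$ together with the new generator $t$.

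The two low-order axioms are quickly dispatched. For antisymmetry~(1), both sides of $\ldb y,x\rdb=-\one^{|\s^n x||\s^n y|}\tau\ldb x,y\rdb$ are double derivations in each slot, so it suffices to compare them on generator pairs: on $A\times A$ this is the antisymmetry already present on $A$; the value $\ldb a,t\rdb$ is by definition $-\one^{|\s^n a||\s^n t|}\tau\Theta(a)$, so the relation holds for $(t,a)$; and since $|\s^n t|=0$ one checks $\tau(t\otimes1-1\otimes t)=-(t\otimes1-1\otimes t)$ directly for $(t,t)$. For compatibility with the differential~(4), testing generators reduces $\ldb t,a\rdb=\Theta(a)$ to the identity $\pa\Theta=\Theta\pa$ (using $\pa t=0$ and $|\s^n t|=0$), which is the second defining condition of a dg double Poisson derivation, while $\ldb t,t\rdb$ is handled by $\pa t=0$.

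The heart of the argument is the double Jacobi identity~(3). Here I would use that the Jacobiator $\ldb-,-,-\rdb$ is a triple derivation in each of its arguments \cite{Van2008Double}, so that it is enough to verify $\ldb x,y,z\rdb=0$ on generator triples. These fall into four families: all three in $A$, handled by the double Jacobi identity on $A$; the triple $(t,t,t)$, handled by the double Poisson structure on $\bbK[t]$; triples with one $t$ and two elements of $A$, e.g.\ $(t,a,b)$ and its cyclic images; and triples with two copies of $t$, e.g.\ $(t,t,a)$ and its cyclic images. In the one-$t$ family the leading term $\ldb t,\ldb a,b\rdb\rdb_L=\Theta(\ldb a,b\rdb')\otimes\ldb a,b\rdb''$ together with the two cyclic terms reproduces, sign for sign, the first defining identity of a dg double Poisson derivation from Definition~\ref{def: dpoi der}; indeed that identity is precisely the statement $\ldb t,a,b\rdb=0$, so this family is immediate from the hypothesis on $\Theta$.

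I expect the two-$t$ family to be the main obstacle. For $(t,t,a)$ the Jacobiator combines the iterated contributions $\ldb t,\ldb t,a\rdb\rdb_L=\Theta\bigl(\Theta(a)'\bigr)\otimes\Theta(a)''$ and its cyclic partner, which are \emph{quadratic} in $\Theta$, with the contribution of the self-bracket through $\ldb a,\ldb t,t\rdb\rdb_L$, which---because $\ldb t,t\rdb=t\otimes1-1\otimes t$ and $\ldb a,1\rdb=0$---contributes a term carrying a unit tensor factor. The delicate point is that the precise form $t\otimes1-1\otimes t$ of the self-bracket is exactly what is needed to offset the difference $(\Theta\otimes\id-\id\otimes\Theta)\Theta(a)$ of the two quadratic terms; this is where the degree-zero double Poisson derivation hypothesis must be used to its full strength. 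I would therefore carry out this cancellation explicitly, tracking the cyclic permutations $(123)$ and $(321)$ and the Koszul signs (most of which collapse because $|\s^n t|=0$), and verify the analogous statement for the remaining cyclic placements of the two copies of $t$. Once all four families are settled, the Jacobiator vanishes on generators and hence identically, and $A\langle t;\Theta\rangle$ is a dg double Poisson algebra of degree $n$.
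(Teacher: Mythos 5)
Your proposal follows essentially the same route as the paper's proof: the paper likewise reduces the theorem (treating \(n=0\) for simplicity) to the Jacobi identities \(\ldb t,t,a\rdb=0\) and \(\ldb t,t,t\rdb=0\), taking the all-\(A\) triples from the double Poisson hypothesis on \(A\) and the one-\(t\) triples as the very content of condition (1) in Definition~\ref{def: dpoi der}, exactly as you observe. The ``two-\(t\)'' cancellation you defer as the main obstacle is in the paper a short direct computation ending in
\[
\ldb t,\Theta(a)'\rdb\otimes\Theta(a)''-\Theta(a)'\otimes 1\otimes\Theta(a)''-\Theta(a)'\otimes\ldb t,\Theta(a)''\rdb=0,
\]
which the paper justifies by applying condition (1) of Definition~\ref{def: dpoi der} to the pair \((t,a)\), with the self-bracket \(\ldb t,t\rdb=t\otimes 1-1\otimes t\) supplying precisely the unit term you predicted would offset the quadratic difference \((\Theta\otimes\id-\id\otimes\Theta)\Theta(a)\).
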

\begin{proof}
	For simplicity, we consider only the case \(n=0\).
	It suffices to verify the Jacobi identities: \(\ldb t,t,a\rdb=0\) for every \(a\in A\) and \(\ldb t,t,t\rdb=0\).
	\begin{align*}
		\ldb t,t,a \rdb = & \ldb t,\ldb t,a\rdb \rdb_L + (321)\circ\ldb a, \ldb t,t\rdb \rdb_L +  (123)\circ\ldb t, \ldb a,t\rdb \rdb_L\\
		= & \ldb t, \Theta(a)' \otimes \Theta(a)'' \rdb_L + (321) \cc \ldb a, t \otimes 1 - 1 \otimes t \rdb_L + (123) \cc \ldb t, - \tau \ldb t, a \rdb \rdb_L\\
		= & \ldb t, \Theta(a)' \rdb \otimes \Theta(a)''  - \Theta(a)' \otimes 1 \otimes \Theta(a)'' - \one^{|\Theta(a)'| |\Theta(a)''|}(123) \ldb t, \Theta(a)'' \rdb \otimes \Theta(a)'\\
		= & \ldb t, \Theta(a)' \rdb \otimes \Theta(a)''  - \Theta(a)' \otimes 1 \otimes \Theta(a)'' - \Theta(a)' \otimes \ldb t, \Theta(a)'' \rdb.
	\end{align*}
	Since \(\ldb t,-\rdb\) is a dg double Poisson derivation, applying Definition \ref{def: dpoi der} yields
	\begin{align*}
		\ldb t, \Theta(a)' \rdb \otimes \Theta(a)'' - \Theta(a)' \otimes 1 \otimes \Theta(a)'' - \Theta(a)' \otimes \ldb t, \Theta(a)'' \rdb = 0.
	\end{align*}
	Hence, \(\ldb t,t,a \rdb = 0\). A similar computation yields \(\ldb t,t,t\rdb = 0\).
\end{proof}

Throughout this work, the dg algebra \(A\langle t\rangle\) endowed with the dg double bracket defined above is called the \emph{double Poisson algebra extension of \(A\) by \(\Theta\)}, denoted \(A\langle t;\Theta\rangle\).

\subsection{NC Poisson algebra extension}\label{subsec: nc poi ext}

It is natural to consider the extension problem in the setting of NC Poisson structures.
Moreover, since a double Poisson bracket induces an NC Poisson structure, one must clarify how the two extension procedures interact.
Since an NC Poisson structure is not merely a dg Lie bracket on \(A_\natural\); rather, the adjoint action in each argument is induced by a derivation, we introduce the following definition.
\begin{definition}
	Let \((A,\pa,\{-,-\})\) be an NC Poisson algebra of degree \(n\).
	 An \emph{NC Poisson derivation on \(A\)}  is a derivation \(\nu\in\dgDer_S(A)\) such that 
	 for any \(\bar a, \bar b \in A_\natural\),
	\begin{align*}
		\nu({ \{\bar a, \bar b\} }) =& \{\overline{\nu(a)},\bar b\}+\one^{|\nu| |\s^ n a|}\{\bar a,\overline{\nu(b)}\};\\
		\pa \cc \nu =& \one^{|\nu|} \nu \cc \pa.
	\end{align*}
\end{definition}

Let \((A,\pa,\{-,-\})\) be an NC Poisson algebra of degree \(n\).
Assume \(t\) has degree \(n\), and let \(\nu\) be an NC Poisson derivation of degree zero.
We extend the NC Poisson bracket \(\{-,-\}\) to \(A\langle t\rangle\) by setting, for any \(\bar a\in A_\natural\),
\begin{equation}\label{for: H_0 ext}
	\{\bar t, \bar a \} :=\overline{\, \nu (a) \,},\qquad
	\{\bar t, \bar t\} = 0.
\end{equation}
The following theorem provides a sufficient condition for this extension.
\begin{theorem}\label{thm: h_0 ext}
	Let \((A,\pa,\{-,-\})\) be an NC Poisson algebra of degree \(n\).
	Let \(\nu\in\dgDer_S(A)\) be an NC Poisson derivation of degree zero.
	Then \(A\langle t; \nu \rangle\) is an NC Poisson algebra of degree \(n\).
\end{theorem}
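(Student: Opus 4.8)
The plan is to produce the data of an NC Poisson structure on $B := A\langle t\rangle$ in the sense of Definition \ref{def: nc pois}: a dg Lie bracket of degree $n$ on $B_\natural$ extending the given one on $A_\natural$ and realizing the prescribed values (\ref{for: H_0 ext}), together with, for each class, a derivation of $B$ inducing its adjoint action. First I would fix the inducing derivations on generators. Extend $\nu$ to a degree-zero derivation $\tilde\nu \in \dgDer_S(B)$ by $\tilde\nu|_A = \nu$ and $\tilde\nu(t) = 0$; this will induce $\{\bar t,-\}$. For $a \in A$, extend the derivation $H_a$ (which induces $\{\bar a,-\}$ on $A_\natural$) to a derivation $\hat H_a \in \dgDer_S(B)$ by prescribing $\hat H_a(t) := -\one^{|\s^n a|\,|\s^n t|}\,\nu(a)$, the value forced by the desired antisymmetry $\{\bar a,\bar t\} = -\one^{|\s^n a||\s^n t|}\{\bar t,\bar a\}$. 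For a general monomial $u$ in the generators I would assemble a derivation $H_u \in \dgDer_S(B)$ by the necklace-type Leibniz rule of Example \ref{eg: necklace lie} from the $\hat H_a$ and $\tilde\nu$, and define $\{\bar u,\bar v\} := \overline{H_u(v)}$. By construction the adjoint action factors through $\varrho$, so the last clause of Definition \ref{def: nc pois} is automatic once the bracket is shown to be a dg Lie bracket.

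Next I would record the easy axioms. Graded antisymmetry holds on $A_\natural$ by hypothesis; for the new pairs it is built into the construction, since $\{\bar t,\bar t\} = 0$ is symmetric (note $|\s^n t| = |t| - n = 0$, as $|t| = n$) and $\hat H_a(t)$ was chosen precisely to make $\{\bar a,\bar t\}$ and $\{\bar t,\bar a\}$ antisymmetric. Compatibility with the differential reduces, on the new generators, to the identity $\pa\circ\nu = \one^{|\nu|}\nu\circ\pa$ from the definition of an NC Poisson derivation (with $|\nu| = 0$ here), together with $\pa t = 0$.

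The heart of the proof is the Jacobi identity on $B_\natural$. Since each adjoint map $\{\bar u,-\}$ is induced by a derivation of $B$, the Jacobiator in any single slot is again induced by a derivation (a graded commutator of derivations minus the derivation attached to a bracket); using graded antisymmetry I would reduce the identity to the case where all three arguments are algebra generators, i.e.\ to the triples $(a,b,c)$, $(t,a,b)$, $(t,t,a)$, and $(t,t,t)$ with $a,b,c \in A$. The triple $(a,b,c)$ is the Jacobi identity on $A_\natural$, which holds by hypothesis. The triples $(t,t,a)$ and $(t,t,t)$ are automatic: because $|\s^n t| = 0$ and $\{\bar t,\bar t\} = 0$, the Leibniz-form Jacobiator $\{\bar t,\{\bar t,-\}\} - \{\{\bar t,\bar t\},-\} - \one^{|\s^n t|\,|\s^n t|}\{\bar t,\{\bar t,-\}\}$ collapses to $0$ with no condition imposed. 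The remaining triple $(t,a,b)$ is exactly the defining identity of an NC Poisson derivation: substituting $\{\bar t,\bar a\} = \overline{\nu(a)}$ turns
\[
\{\bar t,\{\bar a,\bar b\}\} = \{\{\bar t,\bar a\},\bar b\} + \one^{|\s^n t||\s^n a|}\{\bar a,\{\bar t,\bar b\}\}
\]
into $\nu(\{\bar a,\bar b\}) = \{\overline{\nu(a)},\bar b\} + \one^{|\nu||\s^n a|}\{\bar a,\overline{\nu(b)}\}$ (the two signs agreeing since $|\s^n t| = 0 = |\nu|$), which is the hypothesis on $\nu$.

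I expect the main obstacle to be the reduction step itself, rather than the generator computations. The bracket lives on the cyclic quotient $B_\natural$, so two points need care: that the assembled derivations $H_u$ are well defined independently of the chosen representative word (invariance under cyclic permutation modulo $S + [B,B]$, hence also the antisymmetry $\overline{H_u(v)} = -\one\,\overline{H_v(u)}$), and that the Jacobiator, being valued in $B_\natural$, genuinely propagates from generators to arbitrary cyclic words. Concretely, one must verify that the relevant combination of derivations lands in the ideal $\dgDer_S(B)^\natural$ of derivations with image in $S + [B,B]$, which does not follow from its vanishing on generators alone and instead requires exploiting the full triderivation structure together with cyclic invariance. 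Keeping track of the Koszul signs throughout this bookkeeping, in particular through the graded commutator $[\tilde\nu,\hat H_a]$ governing the $(t,a,b)$ case, is the delicate part; the rest follows the pattern of Theorem \ref{thm: double ext} and Lemma \ref{lem: dpois to dgla}.
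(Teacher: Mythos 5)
Your proposal is correct and takes essentially the same approach as the paper: the printed proof (carried out for $n=0$) consists exactly of your generator-triple checks, with the triple $(t,a,b)$ vanishing precisely because $\nu$ is an NC Poisson derivation, and the triple $(t,t,a)$ collapsing since $\{\bar t,\bar t\}=0$ and $|\s^n t|=0$. The scaffolding you add --- the explicit inducing derivations $\tilde\nu$ and $\hat H_a$, and the reduction from arbitrary cyclic words in $A\langle t\rangle_\natural$ to generators, which you rightly flag as the delicate point --- is left entirely implicit in the paper's two-line argument, so your write-up is if anything more careful than the original.
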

\begin{proof}
	For simplicity, we consider only the case \(n=0\).
	Since anti-symmetry is immediate, we verify the Jacobi identity on \(A\langle t\rangle_\natural\).
	For any \(a,b\in A\), because \(\nu\) is an NC Poisson derivation, \(\{\bar t,\bar a,\bar b\}=0\); and
	\begin{align*}
		\{\bar t, \bar t, \bar a\}
		&= \{\bar t,\{\bar t,\bar a\}\} + \{\bar a,\{\bar t,\bar t\}\} + \{\bar t,\{\bar a,\bar t\}\}\\
		&= \{\bar t,\overline{\nu(a)}\} + 0 - \{\bar t,\overline{\nu(a)}\}\\
		&= 0.
	\end{align*}
	Consequently, \(A\langle t;\nu\rangle\) is an NC Poisson algebra of degree \(n\).
\end{proof}
Throughout this work, the dg algebra \(A\langle t\rangle\) endowed with the bracket defined above is called the \emph{NC Poisson algebra extension of \(A\) by \(\nu\)}, denoted \(A\langle t;\nu\rangle\).

Next, we show that a dg double Poisson algebra extension canonically induces an NC Poisson algebra extension.
\begin{proposition}\label{prop: dpois ext to h0 ext}
	Let \((A,\pa,\ldb-,-\rdb)\) be a dg double Poisson algebra of degree \(n\).
	Then for any dg double Poisson derivation \(\Theta\in\dgDDer_S(A)\),
	\(\theta:=\bm\cc\Theta\) is an NC Poisson derivation.
	Furthermore, if the degree of \(\Theta\) is zero and \(t\) is of degree \(n\),
	then \(A \langle t; \theta \rangle\) is an NC Poisson algebra extension of \(A\).
\end{proposition}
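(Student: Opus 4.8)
The plan is to split the statement into two parts. The first — that $\theta := \bm\cc\Theta$ is an NC Poisson derivation on $A$ — carries all the computational content. The second — that the extension $A\langle t;\theta\rangle$ produced by Theorem \ref{thm: h_0 ext} agrees with the NC Poisson structure induced by the double Poisson extension $A\langle t;\Theta\rangle$ of Theorem \ref{thm: double ext} — is then a short compatibility check. For the first part, the two auxiliary conditions are immediate: since $\Theta$ is a double derivation, $\theta = \bm\cc\Theta$ is an ordinary derivation (with $\bm\Theta(xy) = \theta(x)y + \one^{|\theta||x|}x\,\theta(y)$), and because $\bm$ is a chain map, the relation $\pa\cc\Theta = \one^{|\Theta|}\Theta\cc\pa$ transfers to $\pa\cc\theta = \one^{|\theta|}\theta\cc\pa$, with $|\theta| = |\Theta|$. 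What remains is the Leibniz-type identity
\[
\theta(\{\bar a,\bar b\}) = \{\overline{\theta(a)},\bar b\} + \one^{|\theta||\s^n a|}\{\bar a,\overline{\theta(b)}\}
\]
in $A_\natural$, where $\{\bar a,\bar b\} = \overline{\bm\ldb a,b\rdb}$.

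The strategy for this identity is to reduce it to the defining equation of a dg double Poisson derivation (Definition \ref{def: dpoi der}), which is an equality in $A^{\otimes 3}$. First I expand the left-hand side by the Leibniz rule, $\theta(\{a,b\}) = \theta(\ldb a,b\rdb')\ldb a,b\rdb'' \pm \ldb a,b\rdb'\,\theta(\ldb a,b\rdb'')$, and observe that the first summand is exactly $\bm^{(3)}$ applied to the left-hand side $\Theta(\ldb a,b\rdb')\otimes\ldb a,b\rdb''$ of the double Poisson derivation identity, where $\bm^{(3)}\colon A^{\otimes 3}\to A$ is the triple multiplication. Applying $\bm^{(3)}$ to the right-hand side of that identity and using that $\ldb a,-\rdb$ and $\ldb b,-\rdb$ are double derivations, I match the two resulting terms with the Leibniz halves of $\{a,\theta(b)\}$ and $\{\theta(a),b\}$; for example $\bm^{(3)}(\Theta(b)'\otimes\ldb a,\Theta(b)''\rdb) = \bm(\Theta(b)'\cdot\ldb a,\Theta(b)''\rdb)$ is one Leibniz half of $\{a,\theta(b)\}$. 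The remaining summand $\ldb a,b\rdb'\,\theta(\ldb a,b\rdb'')$ is treated symmetrically after rewriting it through the antisymmetry axiom (1), which interchanges the roles of $a$ and $b$, and then passing to $A_\natural$, where cyclic permutations are allowed up to Koszul sign. Assembling the four halves yields the claimed identity. I expect the sign-and-slot bookkeeping — propagating the Koszul signs $\kappa_1,\kappa_2$ through $\bm^{(3)}$ and controlling the cyclic collapse in $A_\natural$ — to be the main obstacle; structurally this mirrors the proof of Lemma \ref{lem: dpois to dgla}, where the intermediate identity (\ref{for: jacobi id}) is first established in $A\otimes A$ before applying $\bm$.

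For the second part, once $\theta$ is known to be an NC Poisson derivation and (since $\bm$ and $\Theta$ are of degree zero) $\theta$ itself has degree zero, Theorem \ref{thm: h_0 ext} guarantees that $A\langle t;\theta\rangle$ is an NC Poisson algebra of degree $n$. To identify this with the structure induced by the double Poisson extension $A\langle t;\Theta\rangle$, I evaluate the induced bracket of Lemma \ref{lem: dpois to dgla} on the new generator: $\{t,a\} = \bm\ldb t,a\rdb = \bm\,\Theta(a) = \theta(a)$ and $\{t,t\} = \bm\ldb t,t\rdb = \bm(t\otimes 1 - 1\otimes t) = 0$. These are precisely the defining relations (\ref{for: H_0 ext}) of the NC Poisson extension by $\theta$, so the two NC Poisson structures on $A\langle t\rangle$ coincide, which completes the argument.
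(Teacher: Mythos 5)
Your proposal follows essentially the same route as the paper's proof: expand $\theta(\{\bar a,\bar b\})$ by the Leibniz rule, convert the first half via the triple multiplication applied to the defining identity of a dg double Poisson derivation, handle the second half by the antisymmetry axiom with $a$ and $b$ interchanged, and assemble the four resulting terms in $A_\natural$ using cyclic rearrangement, with the second statement then following from Theorem \ref{thm: h_0 ext}. Your extra check that $\{t,a\}=\theta(a)$ and $\{t,t\}=0$ reproduce the relations (\ref{for: H_0 ext}) is a harmless explicit verification of what the paper leaves implicit.
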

\begin{proof}
	For simplicity, we consider the case \(n = 0\text{ and }|\Theta|=0\).
	It is clear that $\theta = \bm \cc \Theta$ is a derivation on $A$.
	For any \(a,b\in A\), one has
	\begin{align*}
		\theta( \{\bar a, \bar b\} )& = \theta \big(\, \overline{\ldb a, b \rdb' \ldb a, b \rdb''} \,\big)\\
		& = \overline{\theta (\ldb a, b \rdb') \ldb a, b \rdb''}  +   \overline{ \ldb a, b \rdb' \theta(\ldb a, b \rdb'')}.
	\end{align*}
	
	Since \(\Theta\) is a dg double Poisson derivation,
	\begin{align*}
		\theta (\ldb a, b \rdb') \ldb a, b \rdb''  = &  \bm \cc (\bm \otimes \id ) \big(\, \Theta  (\ldb a, b \rdb')\otimes \ldb a, b \rdb'' \,\big)\\
		= & \one^{|\s^n b| |\s^n a| + |\mH_b (\Theta(a)' )'| |\mH_b (\Theta(a)')'' \Theta(a)''| + 1}\ldb b,  \Theta (a)'  \rdb'' \Theta (a)'' \ldb  b,  \Theta (a)' \rdb' \\
		& \quad + \one^{|\Theta(b)'| |\Theta(b)''| + |\Theta(b)'| |\mH_a (\Theta(b)'') |} \Theta(b)' \{a , \Theta (b)''\} .
	\end{align*}
	By anti-symmetry,
	\[
	\ldb a ,b \rdb' \theta (\ldb a , b \rdb'') = - \one^{|\s^n a| |\s^n b| + |\mH_b (a) ''| |\mH_b (a)'|} \ldb b , a \rdb'' \theta(\ldb b, a \rdb')
	\]
	Then in \(A_{\natural}\), one has 
	\begin{align*}
		\theta \{\bar a, \bar b\}  = & \overline{\theta (\ldb a, b \rdb') \ldb a, b \rdb''}  + \overline{ \ldb a, b \rdb' \theta(\ldb a, b \rdb'')}\\
		= & \one^{|\Theta(b)'| |\Theta(b)''| + |\Theta(b)'| |\ldb a, \Theta(b)'' \rdb|}  \overline{\Theta(b)' \{a, \Theta(b)'' \}} \\
		& \quad -\one^{|\s^n a| |\s^n b| + |\mH_b (\Theta(a)')'| |\mH_b (\Theta(a)' )'' \Theta(a)''|}  \overline{\ldb b, \Theta(a)' \rdb''  \Theta(a)'' \ldb b, \Theta(a)' \rdb'} \\
		& \quad + \one^{ |\mH_a (\Theta(b)' )'| |\mH_a (\Theta(b)')'' \Theta(b)''| }  \overline{\ldb a,  \Theta (b)'  \rdb'' \Theta (b)'' \ldb  a,  \Theta (b)' \rdb'}\\
		& \quad - \one^{|\Theta(a)'| |\Theta(a)''| + |\Theta(a)'| |\mH_b (\Theta(a)'') | + |\s^n a| |\s^n b|} \overline{\Theta(a)' \{b , \Theta (a)''\}} \\
		= &  \overline{\{ a,  \Theta (b)' \}\Theta (b)''} +
		\one^{|\Theta(b)'| |\Theta(b)''| + |\Theta(b)'| |\ldb a, \Theta(b)'' \rdb|} \overline{ \Theta(b)' \{a, \Theta(b)'' \}} \\
		& -\one^{|\Theta(a)'| |\Theta(a)''| + |\Theta(a)'| |\mH_b (\Theta(a)'') | + |\s^n a| |\s^n b|} \overline{ \Theta(a)' \{b , \Theta (a)''\} }\\
		&- \one^{|\s^n a| |\s^n b|} \overline{\{b, \Theta(a)' \} \Theta(a)''} \\
		= & \overline{\{a, \theta (b)\} } - \one^{|\s^n a| |\s^n b|} \overline{\{b, \theta(a)\}}\\
		= & \overline{\{\theta(a), b\}} +\overline{ \{a, \theta(b)\}}.
	\end{align*}
	Consequently, \(\theta\) is an NC Poisson derivation.
	The second statement follows from Theorem \ref{thm: h_0 ext}.
\end{proof}

\subsection{Reduction commutes with extension}\label{subsec: red com ext}

Let \(A\) be a noncommutative Hamiltonian algebra and let \(A_\bw\) denote its noncommutative Hamiltonian reduction.
It is natural to ask whether extensions are compatible with reduction.
In general, \(A_\bw\) need not carry a dg double Poisson bracket.
We show that certain dg double Poisson algebra extensions of \(A\) descend to NC Poisson algebra extensions of \(A_\bw\), and that this descent is compatible with the Kontsevich--Rosenberg principle.

Since an NC Poisson extension is determined by the derivation associated with \(t\), the following lemma gives a criterion for when the NC Poisson derivation descends to the noncommutative Hamiltonian reduction.
\begin{lemma}
	Let \((A,\pa,\ldb-,-\rdb,\bw)\) be a noncommutative Hamiltonian algebra of degree \(n\).
	Let \(\Theta\in\dgDDer_S (A)\) be a dg double Poisson derivation.
	If \(\Theta (\bw) \in A \otimes A \bw A + A \bw A \otimes A\),
	then \(\theta  = \bm \cc \Theta\) descends to be an NC Poisson derivation of \(A_\bw\).
\end{lemma}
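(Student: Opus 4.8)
The plan is to reduce everything to the single fact that $\theta=\bm\cc\Theta$ preserves the two-sided ideal $A\bw A$, after which the NC Poisson derivation property transports along the reduction map essentially for free. First I would invoke Proposition \ref{prop: dpois ext to h0 ext}, which already guarantees that $\theta=\bm\cc\Theta$ is an NC Poisson derivation on $A$ itself; the only genuinely new content is the descent to the Hamiltonian reduction $A_\bw=A/A\bw A$. To establish this, I would apply the multiplication $\bm$ to the hypothesis $\Theta(\bw)\in A\otimes A\bw A+A\bw A\otimes A$, obtaining $\theta(\bw)=\bm(\Theta(\bw))\in A\cdot A\bw A+A\bw A\cdot A\subseteq A\bw A$. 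Since $\theta$ is a derivation and $A\bw A$ is a two-sided ideal, the Leibniz rule gives $\theta(a\bw b)=\theta(a)\bw b\pm a\,\theta(\bw)\,b\pm a\bw\,\theta(b)$ for all $a,b\in A$, and each summand lies in $A\bw A$ precisely because $\theta(\bw)\in A\bw A$. Hence $\theta(A\bw A)\subseteq A\bw A$, so $\theta$ induces a derivation $\bar\theta$ on $A_\bw$; as $A\bw A$ is a dg ideal and $\theta$ is a chain map up to sign, $\bar\theta$ inherits $\pa\cc\bar\theta=\one^{|\theta|}\bar\theta\cc\pa$.

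Next I would verify that $\bar\theta$ satisfies the bracket identity defining an NC Poisson derivation on $(A_\bw)_\natural$. Here I would use Proposition \ref{prop: CBEG}, which supplies the NC Poisson structure on $A_\bw$ together with the surjective dg Lie algebra morphism $pr:A_\natural\to(A_\bw)_\natural$; by surjectivity this bracket is characterized by $\{pr(\bar a),pr(\bar b)\}=pr(\{\bar a,\bar b\})$. Since $\theta$ is $S$-linear and a derivation it descends to $A_\natural$, and by construction $pr\cc\theta=\bar\theta\cc pr$. Lifting $\bar a,\bar b\in(A_\bw)_\natural$ to $A_\natural$ and chasing the resulting square, the NC Poisson derivation identity for $\theta$ on $A$ pushes forward through $pr$ to the required identity for $\bar\theta$, with all signs transported verbatim from the identity already recorded on $A$.

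I expect the main obstacle to be purely the ideal-preservation step: the hypothesis $\Theta(\bw)\in A\otimes A\bw A+A\bw A\otimes A$ is precisely engineered so that applying $\bm$ lands $\theta(\bw)$ inside $A\bw A$, and without it the middle Leibniz term $a\,\theta(\bw)\,b$ would escape the ideal and obstruct the descent. Once that membership is secured, the descent of the bracket identity is a formal diagram chase through the Lie morphism $pr$, requiring no sign computation beyond what already appears in Proposition \ref{prop: dpois ext to h0 ext}.
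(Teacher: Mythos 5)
Your proposal is correct and follows essentially the same route as the paper: both reduce the problem to showing $\theta(A\bw A)\subseteq A\bw A$ via the hypothesis on $\Theta(\bw)$, and then transport the NC Poisson derivation identity through the dg Lie algebra morphism $pr:A_\natural\to(A_\bw)_\natural$ of Proposition \ref{prop: CBEG}, using Proposition \ref{prop: dpois ext to h0 ext} for the identity on $A$ itself. Your only divergence is cosmetic: you spell out the Leibniz-rule argument for ideal preservation that the paper dismisses as straightforward, and you phrase the descent as a diagram chase where the paper writes the explicit chain of equalities.
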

\begin{proof}
	For simplicity, we consider the case \(n=0\).
	For a dg double Poisson derivation \(\Theta\in\dgDDer_S(A)\), 
	if \(\Theta (\bw)\in A\otimes A\bw A + A\bw A\otimes A\),
	then \(\theta(A\bw A)\subset A\bw A\).
	It is straightforward to verify that \(\theta\in\dgDer_S(A_\bw)\).
	It remains to verify that \(\theta\) is an NC Poisson derivation.
	Write \([a]\in A_\bw\) for the class represented by \(a\in A\).
	Then for any \([a],[b]\in A_\bw\), one has 
	\begin{align*}
		\theta \big( \, {\{ \overline{[a]}, \overline{[b]}\} \, } \big) & = \theta \big( \, \overline{H_{[a]} ([b])}\, \big) =\theta\big( \overline{[H_a (b)]} \big)= \theta\big(\overline{\, [\{a, b\}] \,}\big)\\
		& =\overline{\, [ \theta (\{a, b\})] \,} = \overline{\, [  \{\theta(a), b\} ] \,} + \one^{|\theta| |a|} \overline{\, [  \{a, \theta(b)\} ] \,}\\
		& =\{ \theta (\overline{[a]}), \overline{[b]} \} + \one^{|\theta| |a|}\{\overline{[a]}, \theta (\overline{[b]})\}.
	\end{align*}
	Here the second equality is due to the fact that (\ref{for: nc red for lie}) is a morphism of dg Lie algebras; the fifth equality is due to Proposition \ref{prop: dpois ext to h0 ext}.
\end{proof}

Hence, if \(|\Theta|=0\), then \(A_\bw\langle t;\theta\rangle\) is an NC Poisson algebra extension.
Moreover, their NC Poisson structures are compatible as follows.
\begin{lemma}\label{lem: nc red com wt ext}
	Let \((A,\pa,\ldb-,-\rdb,\bw)\) be a noncommutative Hamiltonian algebra of degree \(n\).
	Let \(\Theta\in\dgDDer_S(A)\) be a dg double Poisson derivation of degree zero.
	Then the canonical projection \(A\langle t;\theta\rangle\to A_\bw\langle t;\theta\rangle\)
	descends to a dg Lie algebra morphism
	\[
	A\langle t;\theta\rangle_\natural  \to A_\bw \langle t;\theta\rangle_\natural.
	\]
\end{lemma}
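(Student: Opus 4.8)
The plan is to realize $pr$ as the map induced by a canonical surjection of dg algebras and then show that it respects the two NC Poisson brackets. First I would observe that the projection $A\to A_\bw$ together with $t\mapsto t$ assembles into a surjective morphism of dg $S$-algebras $\pi:A\langle t\rangle\to A_\bw\langle t\rangle$ whose kernel is the two-sided ideal $I:=A\langle t\rangle\,\bw\,A\langle t\rangle$ generated by $\bw$; indeed $A\langle t\rangle/I\cong(A/A\bw A)\langle t\rangle=A_\bw\langle t\rangle$ because $\bw\in A$. Since $\pi$ carries $S$ into $S$, commutators into commutators, and commutes with the differentials, it descends to a degree-zero chain map $pr:A\langle t;\theta\rangle_\natural\to A_\bw\langle t;\theta\rangle_\natural$, which is the asserted map. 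It then remains only to prove that $pr$ intertwines the brackets.

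For this I would exploit that both sides are \emph{NC Poisson extensions} in the sense of Theorem \ref{thm: h_0 ext}: the bracket on $A\langle t;\theta\rangle_\natural$ is the one induced by the double Poisson extension $A\langle t;\Theta\rangle$ via Proposition \ref{prop: dpois ext to h0 ext}, hence the $\theta$-extension of the NC Poisson structure on $A_\natural$; the bracket on $A_\bw\langle t;\theta\rangle_\natural$ is the $\bar\theta$-extension of the NC Poisson structure on $(A_\bw)_\natural$ supplied by the preceding lemma. Such an extension bracket is determined by its restriction to the base together with the adjoint action of $\bar t$, which is implemented by the extending derivation with $\theta(t)=0$ and $\{\bar t,\bar t\}=0$; that is, $\{\bar u,\bar v\}=\overline{H_{\bar u}(v)}$ where $H_{\bar u}$ is controlled by the letter data $\{H_{\bar a}\}_{a\in A}$ and $H_{\bar t}=\theta$. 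Consequently it suffices to check that $pr$ is compatible with these generating data.

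There are two compatibilities. On the $t$-free part, $pr$ restricts to the canonical map $A_\natural\to(A_\bw)_\natural$, which is a morphism of dg Lie algebras by Proposition \ref{prop: CBEG}; this governs the letter-Hamiltonians $H_{\bar a}$. For the remaining generator $\bar t$ I would use that $\theta$ descends: the preceding lemma gives $\theta(A\bw A)\subseteq A\bw A$, so $\theta$ induces $\bar\theta$ on $A_\bw$ with $\pi\circ\theta=\bar\theta\circ\pi$, and extending by $\theta(t)=\bar\theta(t)=0$ preserves this intertwining on $A\langle t\rangle$. Hence $pr\{\bar t,\bar a\}=pr\,\overline{\theta(a)}=\overline{\bar\theta(\pi a)}=\{\bar t,pr\,\bar a\}$ and $pr\{\bar t,\bar t\}=0=\{\bar t,\bar t\}$, so $pr$ matches the base bracket and intertwines the derivations implementing the $\bar t$-action.

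The step I expect to be the main obstacle is making ``$pr$ respects every $H_{\bar u}$'' rigorous, since the classes involving several copies of $t$ are \emph{not} generated as a Lie algebra by $A_\natural$ and $\bar t$. The cleanest way around this is the coisotropic formulation: show directly that $\overline I$, the image of $I$ in $A\langle t\rangle_\natural$, is a Lie ideal, whence $pr$ (the quotient by $\overline I$) is automatically a Lie morphism onto the quotient bracket, which one identifies with the extension bracket on $A_\bw\langle t;\theta\rangle_\natural$. Computing $\{\bar u,\overline{p\bw q}\}$ through the necklace bracket attached to the double bracket on $A\langle t;\Theta\rangle$, each summand either leaves the distinguished letter $\bw$ unbracketed, so it still lies in $\overline I$, or brackets it; the latter splits into the $\ldb t,\bw\rdb=\Theta(\bw)$ contributions, which remain in $\overline I$ precisely because of the standing hypothesis $\Theta(\bw)\in A\otimes A\bw A+A\bw A\otimes A$, and the $\ldb a,\bw\rdb=a\otimes 1-1\otimes a$ contributions, which cancel in the cyclic quotient by the Casimir (moment-map) identity $\{a,\bw\}=0$. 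Controlling the placements and signs in this necklace cancellation is the genuinely technical point; everything else is formal.
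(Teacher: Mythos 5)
Your main line of argument (paragraphs two and three) is essentially the paper's own proof: the paper observes that the adjoint representation of an NC Poisson structure factors through $\varrho:\dgDer_S(A)_\natural\to\dgEnd_S(A_\natural)$, concludes that it suffices to check $pr$ on generators, and then performs exactly your two verifications --- $pr(\{\bar a,\bar b\})=\{\overline{[a]},\overline{[b]}\}$ via Proposition \ref{prop: CBEG}, and $pr(\{\bar t,\bar a\})=\overline{[\theta(a)]}=\{[\bar t],pr(\bar a)\}$ via the descent of $\theta$ from the preceding lemma. You are also right that the hypothesis $\Theta(\bw)\in A\otimes A\bw A+A\bw A\otimes A$ is in force here as a standing assumption carried over from that lemma, even though the statement of this lemma omits it.

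Your fourth-paragraph worry is legitimate as phrased, but it is resolved far more cheaply than by your coisotropic detour: one does not need $A\langle t\rangle_\natural$ to be Lie-generated by $A_\natural$ and $\bar t$. Both brackets are Hamiltonian in the sense that $\mathrm{ad}_{\bar u}$ is implemented in each argument by a derivation of the underlying algebra; a derivation of $A\langle t\rangle$ (resp.\ of $A_\bw\langle t\rangle$) is determined by its values on the multiplicative generators $A\cup\{t\}$; and $pr$ is an algebra morphism. Hence compatibility on letter brackets propagates to all cyclic words through the Leibniz expansion --- which is precisely the content of your own second paragraph and of the paper's one-line appeal to the factorization through $\varrho$. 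Your alternative route, showing that the image of the ideal generated by $\bw$ is a Lie ideal of $A\langle t;\theta\rangle_\natural$, is sound in outline: the $\Theta(\bw)$ contributions stay in the ideal by the standing hypothesis, and the $\ldb a,\bw_i\rdb=a\otimes e_i-e_i\otimes a$ contributions cancel in the cyclic quotient, the same mechanism as in the proof of \cite[Theorem 7.2.3]{CBEG2007} underlying Proposition \ref{prop: CBEG}. But it is strictly more work than the paper's argument, it partially reproves Proposition \ref{prop: CBEG}, and you leave the necklace sign bookkeeping unverified, so it should be treated as a backup rather than the proof itself.
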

\begin{proof}
	Note that an NC Poisson structure is a dg Lie bracket \(\{-,-\}\), and the adjoint representation
	$\mathrm{ad}:\,  A_\natural \to \dgEnd_S( A_\natural ) $ factors through \(\varrho: \dgDer_S (A)_\natural \to \dgEnd_S (A_\natural)\).
	Therefore, it suffices to verify that the projection \(pr\) preserves the dg Lie bracket on generators.
	For any \(a,b\in A\), by Proposition \ref{prop: CBEG}, 
	it is clear that \(pr(\{\bar a,\bar b\})=\{pr(\bar a),pr(\bar b)\}=\{\overline{[a]},\overline{[b]}\}\).
	What remains to be checked is \(pr(\{\bar t,\bar a\})=\{[\bar t],pr(\bar a)\}\).
	By Definition \ref{def: dpoi der} and (\ref{for: H_0 ext}),
	\begin{align*}
		\{\bar t, \bar a\} & = \overline{\{t, a\}} = \overline{\theta(a)};\\
		\{ [\bar t], pr (\bar a)\} & = \{ [\bar t], \overline{[a]} \} = \overline{\, [\{t, a\}]\,} = \overline{\, [\theta(a)]\, } = pr\big(\, \{\bar t, \bar a\} \,\big).
	\end{align*}
	Consequently, \(pr\) is a dg Lie algebra morphism.
\end{proof}

In other words,
noncommutative reduction commutes with extension.
Next, we prove that  this commutativity fits into the Kontsevich--Rosenberg principle for ordinary algebras.

\begin{theorem}\label{thm: com cubic}
	Let \((A\in\Alg_S,\ldb-,-\rdb,\bw)\) be a noncommutative Hamiltonian algebra.
	Let \(\Theta\in\dgDDer_S(A)\) be a dg double Poisson derivation of degree zero such that \(\Theta (\bw)\in A\otimes A\bw A + A\bw A\otimes A\).
	Then, for any \(\bd\in\bbN^I\), there is a commutative cube of dg Lie algebras and dg Lie algebra morphisms:
	\begin{equation*}
		\begin{split}
			\xymatrixrowsep{0.8pc}
			\xymatrixcolsep{1.2pc}
			\xymatrix{
				A_\natural\ar[rd]|-{pr} \ar[rr]^-{\rm inclusion}  \ar[dd]|-{\Tr}&& 
				A\langle t; \theta \rangle_\natural  \ar[rd]|-{pr} \ar[dd]|-(0.3){\Tr}\\
				& (A_\bw)_\natural   \ar[dd]|-(0.7){\Tr} \ar[rr]^(0.25){\rm inclusion} 
				&& 
				A_\bw\langle t; \theta \rangle_\natural \ar[dd]|-{\Tr} \\
				A_\bd  ^{\GL_\bd (\bbK)} \ar[rd]|-{pr} \ar[rr]^(0.6){\rm inclusion} && 
				(A\langle t; \theta \rangle)_\bd ^{\GL_\bd (\bbK)}  \ar[rd]|-{pr} \\
				& (A_\bw)_\bd ^{\GL_\bd (\bbK)}    \ar[rr]^-{\rm inclusion} 
				&& (A_\bw \langle t; \theta \rangle)_\bd ^{\GL_\bd (\bbK)}.
			}
		\end{split}
	\end{equation*}
\end{theorem}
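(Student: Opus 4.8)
The plan is to use that $A$ is an ordinary algebra, so the double Poisson bracket has degree $n=0$ and every vertex of the cube is an ordinary Lie algebra; each arrow labelled a ``dg Lie algebra morphism'' is then an honest Lie algebra homomorphism. I would organize the verification along the three structural axes of the cube---the extension $A \rightsquigarrow A\langle t;\theta\rangle$ adjoining $t$, the Hamiltonian reduction $A \rightsquigarrow A_\bw$ realized by the maps $pr$, and the trace $\Tr_\bd$ passing to $\GL_\bd(\bbK)$-invariant functions on the representation scheme---and check commutativity face by face, after first recording that all eight vertices carry the claimed structures. The four noncommutative algebras are NC Poisson algebras: $A$ by hypothesis, $A_\bw$ by Proposition \ref{prop: CBEG}, $A\langle t;\theta\rangle$ by Proposition \ref{prop: dpois ext to h0 ext}, and $A_\bw\langle t;\theta\rangle$ by the lemma showing $\theta$ descends to an NC Poisson derivation of $A_\bw$ combined with Theorem \ref{thm: h_0 ext}; correspondingly each of $A_\bd^{\GL_\bd(\bbK)}$, $(A_\bw)_\bd^{\GL_\bd(\bbK)}$, and their $t$-extensions is a Lie algebra under the Poisson bracket transported by Proposition \ref{prop: noncom ham to ham}.

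The top face commutes by combining Lemma \ref{lem: nc red com wt ext} with Proposition \ref{prop: CBEG}: the two composites $A_\natural \to A\langle t;\theta\rangle_\natural \to A_\bw\langle t;\theta\rangle_\natural$ and $A_\natural \to (A_\bw)_\natural \to A_\bw\langle t;\theta\rangle_\natural$ need only be compared on the generating classes $\bar a$ (for $a\in A$) and $\bar t$, where both send $\bar a \mapsto \overline{[a]}$ and fix $\bar t$. The two inclusions on the $\natural$-level are Lie morphisms because the extended double bracket restricts to the original one on $A$, and the two projections are Lie morphisms by Proposition \ref{prop: CBEG} and Lemma \ref{lem: nc red com wt ext}.

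For the four vertical faces I would invoke naturality of the trace. The key observation is that $\Tr_\bd(\bar a) = \sum_i a_{ii}$ is assembled from the matrix coefficients $a_{ij}$, and these are natural: for any $S$-algebra morphism $\phi\colon B \to C$ one has $\phi_\bd(a_{ij}) = \phi(a)_{ij}$, hence $\phi_\bd \circ \Tr_\bd^{B} = \Tr_\bd^{C} \circ \phi_\natural$. Applying this to the two inclusions and the two projections produces the front, back, left, and right faces. One must also confirm that the induced maps on the representation-scheme side are Poisson morphisms: the inclusion-induced maps preserve the bracket since the double bracket restricts, via \eqref{for: double poisson to poisson}, while the projection-induced maps are Poisson by the reduction theory underlying Proposition \ref{prop: CBEG}; the trace edges themselves are Lie morphisms by Proposition \ref{prop: CB H_0}.

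Finally, the bottom face is obtained by applying the covariant functor $(-)_\bd^{\GL_\bd(\bbK)}$ to the commuting square of $S$-algebra morphisms $A \to A\langle t;\theta\rangle$, $A\to A_\bw$, $A\langle t;\theta\rangle \to A_\bw\langle t;\theta\rangle$, $A_\bw \to A_\bw\langle t;\theta\rangle$, which commutes because every path carries $a$ to its class $[a]$ and fixes $t$; functoriality then yields the bottom face, and the observations above make it a face of Lie algebras. Assembling the six faces gives the commuting cube. The main obstacle is not this formal assembly but the verification that the reduction-induced maps on representation schemes are genuinely Poisson morphisms in a way compatible with the $t$-extension; this is exactly where the hypothesis $\Theta(\bw)\in A\otimes A\bw A + A\bw A\otimes A$ is essential, since it guarantees (through the descent of $\theta$ to $A_\bw$) that $\theta$ is tangent to the reduction locus, so that the extended NC Poisson structure passes coherently through Hamiltonian reduction on both the noncommutative and the representation-scheme sides.
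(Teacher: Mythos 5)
Your proposal is correct and follows essentially the same route as the paper's proof: a face-by-face verification of the cube resting on Proposition \ref{prop: CBEG}, Proposition \ref{prop: dpois ext to h0 ext}, Lemma \ref{lem: nc red com wt ext}, Proposition \ref{prop: noncom ham to ham}, and Proposition \ref{prop: CB H_0}. You even make explicit two points the paper leaves implicit---the naturality $\phi_\bd(a_{ij})=\phi(a)_{ij}$ behind the commutativity of the trace squares, and the bottom face via functoriality of $(-)_\bd^{\GL_\bd(\bbK)}$---so no gap remains.
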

\begin{proof}
	The commutativity of 
	\[
	\xymatrix{
		A_\natural \ar[r]  \ar[d] & (A_\bw)_\natural \ar[d]\\
		A_\bd ^{\GL_\bd (\bbK)} \ar[r]          &  (A_\bw )_\bd ^{\GL_{\bd} (\bbK)}
	}
	\]
	follows from Proposition \ref{prop: noncom ham to ham}, Proposition \ref{prop: CB H_0}, and Proposition \ref{prop: CBEG}.
	
	The commutativity of
	\begin{align*}
		\xymatrix{
			A_\natural \ar[r]  \ar[d] & A\langle t ; \theta \rangle_\natural  \ar[d]\\
			(A)_\bd ^{\GL_{\bd} (\bbK)}  \ar[r]         & (A\langle t ; \theta \rangle)_\bd ^{\GL_{\bd} (\bbK)} 
		}
		\quad \text{and} \quad
		& \xymatrix{
			(A_\bw)_\natural  \ar[r]  \ar[d] & A_\bw \langle t ; \theta \rangle_\natural \ar[d]\\
			(A_\bw )_\bd ^{\GL_{\bd} (\bbK)}  \ar[r]         & (A_\bw \langle t ; \theta \rangle)_\bd ^{\GL_{\bd} (\bbK)} 
		}\\
	\end{align*}
	follow from Proposition \ref{prop: noncom ham to ham}, Proposition \ref{prop: CB H_0}, and Proposition \ref{prop: dpois ext to h0 ext}.
	
	The commutativity of 
	\begin{equation*}
		\xymatrix{
			A_\natural \ar[r]  \ar[d] & A\langle t ; \theta \rangle_\natural \ar[d]\\
			(A_\bw)_\natural \ar[r]  & A_\bw \langle t ; \theta \rangle_\natural
		}
	\end{equation*}
	follows from Proposition \ref{prop: CBEG} and Lemma \ref{lem: nc red com wt ext}.
	
	The commutativity of 
	\begin{equation*}
		\xymatrix{
			A\langle t ; \theta \rangle_\natural  \ar[r]  \ar[d] & A_\bw \langle t ; \theta \rangle_\natural  \ar[d]\\
			(A\langle t ; \theta \rangle)_\bd ^{\GL_{\bd}(\bbK)}   \ar[r]          & (A_\bw \langle t ; \theta \rangle)_\bd ^{\GL_{\bd} (\bbK)} 
		}
	\end{equation*}
	follows from Proposition \ref{prop: noncom ham to ham}, Proposition \ref{prop: CB H_0}, Lemma \ref{lem: nc red com wt ext}, Proposition \ref{prop: dpois ext to h0 ext}, and Proposition \ref{prop: CBEG}.
\end{proof}

\section{\(\s\)-constructions}\label{sec: s-bracket}

For a dg double Poisson algebra \(A\), a natural question is  whether the shifted complex \(\s^d A = A[d]\) inherits a dg double Poisson structure.
We give an affirmative answer by introducing the \(\s\)-construction.
As a main application, we explain how double Poisson brackets induce Lie algebra structures on reduced cyclic homology.
This phenomenon may be viewed as a noncommutative analogue of the classical fact that the cohomology of a smooth Poisson manifold carries a graded Lie algebra structure.

\subsection{Main constructions}\label{subsec: s-const}
We begin by recalling the bar and cobar constructions; see \cite{Lod1998Cyc} for further details.
Let $A = S \oplus \bar{A} \in \adga_S$ be an augmented dg $S$-algebra.
On the cocomplete coaugmented graded coalgebra
\(T^c_S(\s\bar{A})=\bigoplus_{n=0}(\s\bar{A})^{\otimes n}
= S\oplus\s\bar{A}\oplus(\s\bar{A})^{\otimes 2}\oplus\cdots,\)
there are two differentials, \(\pa\) and \(b\).
The differential \(\pa\) is induced by the internal differential \(\pa_A\) on \(A\).
More precisely,
$$\pa(\s a_1\otimes\cdots\otimes\s a_n)=\sum_{i=1}^{n} \one ^{|\s a_{<i}|+1}
\s a_1\otimes\cdots\otimes\s(\pa_A a_i)\otimes\cdots\otimes\s a_n.$$
The differential \(b\) is given by:
$$b(\s a_1\otimes\cdots\otimes\s a_n)=\sum_{i=1}^{n-1} \one^{|\s a_{<i+1}| +1}
\s a_1\otimes\cdots\otimes\s(a_i a_{i+1})\otimes\cdots\otimes\s a_n.$$
One checks that \(\pa\cc b+b\cc\pa=0\).
Hence, the total differential \(\pa+b\) is well-defined on \(T_S^c(\s\bar{A})\).
The cocomplete coaugmented dg \(S\)-coalgebra \(T_S^c(\s\bar{A})\) is called the \emph{bar construction of \(A\)}, denoted \(\bfB A\).

Let \(C=S\oplus\bar{C}\) be a coaugmented dg \(S\)-coalgebra.
On the augmented graded \(S\)-algebra
$$T_S(\si\bar{C})=\bigoplus_{n=0}(\si\bar{C})^{\otimes n}
= S\oplus\si\bar{C}\oplus(\si\bar{C})^{\otimes 2}\oplus\cdots,$$
there are two differentials, \(\pa\) and \(\delta\).
\(\pa\) is induced from the internal differential \(\pa_C\) on \(C\) by
$$\pa(\si c_1\otimes\cdots\otimes\si c_n)
:=\sum_{i=1} \one^{|\si c_{< i}| +1}\si c_1\otimes\cdots\otimes\si(\pa_C c_i)\otimes\cdots\otimes\si c_n.$$
The differential \(\delta\) is given by
$$\delta(\si c_1\otimes\cdots\otimes\si c_n)
=\sum_{i=1}^{n} \one^{|\si c_{<i}|+|\si c_{i1}|}
\si c_1\otimes\cdots\otimes\si c_{i1}\otimes\si c_{i2}\otimes\cdots\otimes\si c_n,$$
where \(\Delta(c_i)=c_{i1}\otimes c_{i2}\).
One checks that \(\pa\cc\delta+\delta\cc\pa=0\).
Thus, the total differential \(\pa+\delta\) is well-defined on \(T_S(\si\bar{C})\).
The augmented dg \(S\)-algebra \(T_S(\si\bar{C})\) is called the \emph{cobar construction of \(C\)}, denoted \(\bfOm C\).

A key property of the cobar–bar construction is that it provides canonical cofibrant resolutions in the categories of dg algebras and dg coalgebras.
A proof of the following proposition can be found in \cite[Theorem 2.3.1]{LV2012Alg}.
\begin{proposition}\label{prop: cobar bar resolution}
	Let $A$ be an augmented dg $S$-algebra.
	Let $C$ be a conilpotent dg $S$-coalgebra.
	Then the counit $\epsilon : \bfOm \bfB A \to A$ is a quasi-isomorphism of dg $S$-algebras,
	and the unit $\nu : C \to \bfB \bfOm C$ is a quasi-isomorphism of dg $S$-coalgebras.
\end{proposition}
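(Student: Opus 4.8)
This is the classical bar--cobar resolution theorem, and the plan is to follow the twisting-morphism strategy of \cite{LV2012Alg}, adapted to the $S$-linear dg setting, treating the counit $\epsilon:\bfOm\bfB A\to A$ and the unit $\nu:C\to\bfB\bfOm C$ as dual statements. The conceptual reduction is to pass from a claim about the (co)unit to the acyclicity of an associated Koszul (one-sided bar/cobar) complex. Recall that $\epsilon$ and $\nu$ arise, under the fundamental theorem of twisting morphisms, from the universal twisting morphisms $\pi:\bfB A\to A$ (projection onto $\s\bar{A}$ followed by desuspension) and $\iota:C\to\bfOm C$. The key general fact I would invoke is that a twisting morphism $\alpha$ is \emph{Koszul}, i.e.\ its one-sided Koszul complex is acyclic, precisely when the induced dg algebra (resp.\ coalgebra) map is a quasi-isomorphism; so it suffices to prove that $\pi$ and $\iota$ are Koszul.

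I would treat the counit first. The Koszul complex attached to $\pi$ is the twisted tensor product $A\otimes_\pi\bfB A=\bigoplus_{n\ge 0}A\otimes(\s\bar{A})^{\otimes n}$, i.e.\ the normalized bar resolution, and the claim is that its total differential (the internal $\pa$, the multiplication term $b$, and the twisting term coming from $\pi$) makes it acyclic onto $S$. First I would establish this when the internal differential vanishes, $\pa_A=0$: here one writes down the classical extra-degeneracy contracting homotopy $h$ that inserts a bar letter, $h(a_0\otimes\s a_1\otimes\cdots\otimes\s a_n)=1\otimes\s a_0\otimes\s a_1\otimes\cdots\otimes\s a_n$ (with $a_0$ projected to $\bar{A}$), and verifies that $dh+hd$ is the identity on the reduced complex, by a direct computation using only associativity of $b$ and the Koszul signs fixed in the Conventions. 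Since $S$ is semisimple, every $S$-module is projective and the relevant K\"unneth isomorphisms hold, so no flatness hypotheses intervene.

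To remove the assumption $\pa_A=0$, I would filter $A\otimes_\pi\bfB A$ by \emph{weight}, the total number of $\bar{A}$-letters. The multiplication differential $b$ strictly lowers the weight, whereas $\pa$ and the twisting term preserve it, so on the associated graded $E^0$ the internal differential $\pa$ is split off and the leading differential is exactly the acyclic one from the previous step. The filtration is exhaustive and bounded below in each fixed total degree, so the comparison lemma for filtered complexes lets acyclicity lift from $E^1$ to the total complex; hence $\pi$ is Koszul and $\epsilon$ is a quasi-isomorphism. The unit $\nu$ is handled by the dual argument, using the Koszul complex $\bfOm C\otimes_\iota C$, a contracting homotopy that deletes a cobar letter, and the filtration by cobar length.

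The step I expect to be the main obstacle is not the contracting homotopy itself but guaranteeing convergence on the coalgebra side, and this is exactly where the hypothesis that $C$ is conilpotent is indispensable: conilpotency makes the reduced coproduct locally nilpotent, which is what ensures that the cobar differential $\delta$ is well defined, that the weight filtration is exhaustive, and that the spectral sequence of the filtered complex converges so that the comparison lemma applies. The remaining work, carrying the Koszul signs consistently through the suspension and desuspension isomorphisms $\s$ and $\si$ and through the cyclic conventions, is purely bookkeeping and introduces no essential difficulty.
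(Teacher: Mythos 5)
Your proposal is correct and coincides with the proof the paper invokes: the paper gives no argument of its own but cites Theorem 2.3.1 of Loday--Vallette, whose proof is exactly your twisting-morphism/Koszul-complex strategy (acyclicity of the one-sided twisted tensor products $A\otimes_\pi \bfB A$ and $\bfOm C\otimes_\iota C$ via an explicit extra-degeneracy contraction, a filtration-plus-comparison argument, and conilpotency of $C$ guaranteeing convergence on the coalgebra side). One bookkeeping slip worth fixing: with weight defined as the number of bar letters, the twisting term $d_\pi$ also lowers weight by one (it absorbs the first bar letter into the $A$-factor), so to make the acyclic complex of your first step appear on the associated graded you should filter by total internal degree rather than by weight --- or simply observe that the extra-degeneracy map is a contracting homotopy for the full differential, which renders the filtration step unnecessary on the algebra side.
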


A key observation is the following:
\begin{lemma}
	Let $A \in \adga_S$.
	Then there is an isomorphism of coaugmented dg coalgebras:
	\(\bfB A \cong S \oplus \s \bar A \langle t \rangle\).
\end{lemma}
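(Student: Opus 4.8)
The plan is to realise the stated isomorphism as an explicit relabelling of the bar construction. Recall that $\bfB A=T^c_S(\s\bar A)=S\oplus\bigoplus_{n\ge 1}(\s\bar A)^{\otimes n}$, with coaugmentation the inclusion of the weight-zero summand $S$. I read the right-hand side $\s\bar A\langle t\rangle$ as the $S$-module spanned by the alternating words $\s a_1\,t\,\s a_2\cdots t\,\s a_n$ ($n\ge 1$, $a_i\in\bar A$), where $t$ is a formal symbol of degree $0$ placed in each of the $n-1$ gaps, so that every such word is homogeneous of degree $\sum_i|\s a_i|$. First I would define
\[
\phi:\ \overline{\bfB A}\ \longrightarrow\ \s\bar A\langle t\rangle,\qquad \s a_1\otimes\cdots\otimes\s a_n\ \longmapsto\ \s a_1\,t\,\s a_2\cdots t\,\s a_n,
\]
extended by $\id_S$ on the coaugmentation, so that $\phi$ is tautologically a degree-preserving $S$-linear bijection that respects the coaugmentations.

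It then remains to match the two remaining pieces of structure. On $\bfB A$ the coproduct is deconcatenation, whose reduced part splits a word at each of its $n-1$ internal gaps; under $\phi$ these gaps are precisely the occurrences of $t$, so I would equip $\s\bar A\langle t\rangle$ with the coproduct ``cut at a $t$ and discard it'',
\[
\bar\Delta\bigl(\s a_1 t\cdots t\s a_n\bigr)=\sum_{i=1}^{n-1}(\s a_1 t\cdots t\s a_i)\otimes(\s a_{i+1}t\cdots t\s a_n),
\]
and observe that $\phi$ intertwines $\bar\Delta$ with reduced deconcatenation by construction. For the differential, the internal part $\pa$ acts letterwise by $\pa_A$, which preserves $\bar A$ because the augmentation is a chain map, whence $\phi\cc\pa=\pa\cc\phi$ at once. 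The bar part $b$ contracts an adjacent pair $\s a_i\otimes\s a_{i+1}\mapsto\s(a_ia_{i+1})$; since $\bar A$ is a two-sided ideal we have $a_ia_{i+1}\in\bar A$, so under $\phi$ this is exactly the operation ``delete a $t$ and multiply its two neighbours''. Transporting the signs of $b$ endows $\s\bar A\langle t\rangle$ with a differential, and the identity $\pa\cc b+b\cc\pa=0$ recorded before the statement shows that $\pa+b$ squares to zero on the target as well.

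The verification is thus bookkeeping rather than conceptual, and the step I expect to require the most care is the sign discipline: one must confirm that the Koszul signs produced by the weight grading of $\bfB A$ in the definitions of $b$ and of deconcatenation are carried by $\phi$ onto the signs in the formulas above. Once this is checked on a general homogeneous word—equivalently, once the intrinsic dg-coalgebra structure on $\s\bar A\langle t\rangle$ is pinned down so that $t$ serves simultaneously as the separator for the coproduct and as the site of the multiplication in $b$—the map $\phi$ is a morphism of coaugmented dg coalgebras, and being a bijection it is the desired isomorphism.
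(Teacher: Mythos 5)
Your proposal is correct and takes essentially the same route as the paper: the paper defines exactly the same relabelling \(\s a_1\otimes\cdots\otimes\s a_n\mapsto \s a_1\, t\, \s a_2\cdots t\,\s a_n\) as an isomorphism of graded spaces (forcing \(|t|=0\), as you note) and then simply transports the dg coalgebra structure along it. Your extra bookkeeping---spelling out the deconcatenation coproduct at the occurrences of \(t\) and the transported differentials \(\pa\) and \(b\)---merely makes explicit what ``transported along this isomorphism'' means, so there is nothing missing.
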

\begin{proof}
	As graded spaces, one has \(T_S^c(\s\bar A)\cong S\oplus\s\bar A\langle t\rangle\) via the map
	\[
	\s a_1\otimes\cdots\otimes\s a_n \mapsto \s a_1 t \s a_2 t \cdots t \s a_n.
	\]
	The dg coalgebra structure on \(S\oplus\s\bar A\langle t\rangle\) is transported along this isomorphism.
\end{proof}
As a corollary, the canonical cofibrant resolution \(\bfOm\bfB A\) provided by the cobar–bar construction is isomorphic to
\[
(T_S\,\si(\s\bar A\langle t\rangle),\pa+b+\delta,\otimes)
\]
as dg algebras.
Furthermore, the reason to replace the bar construction by \(S \oplus \s \bar A \langle t \rangle\) is as follows.
Using the shifted multiplication \(\tbm\) (see (\ref{for: tbm})), one can verify that  there exists a free product \(\ast\) on
\(\s \bar A \langle t \rangle\);
this free product is not the same as the underlying tensor product in the bar construction.
For \(\s a,\ \s b \in \s \bar A \langle t \rangle \), one has \(\s a \ast \s b = \one^{d|a| + d}\s (ab)\).

When \(A\) is a dg double Poisson algebra, it is natural to ask whether \(T\si(\s\bar A\langle t\rangle)\) inherits a dg double Poisson structure.
The crucial point is to verify the existence of a double Lie bracket on \(\s\bar A\langle t\rangle\).
It is known that if \(L\) is a dg Lie algebra of degree \(n\), then for any \(d\in\bbZ\), the shifted complex \(\s^d L\) is a dg Lie algebra of degree \(n-d\).
However, for a dg double Poisson algebra \(A\) of degree \(n\), it does not follow in general that \(\s^d A\) is a dg double Poisson algebra of degree \(n-d\).
Intuitively, one should define a double bracket on \(\s^d A\) by shifting the bracket on \(A\).

\begin{proposition}\label{prop: sdA double  Lie}
	Let \((A,\pa,\db_A)\) be a dg double Lie algebra of degree \(n\).
	Then for any \(d\in\bbZ\), \(\s^d A\) is a dg double Lie algebra of degree \(n\).
\end{proposition}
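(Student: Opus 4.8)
The plan is to transport the double Lie structure along the canonical suspension isomorphism, arranging that the two output slots of the bracket absorb two copies of the shift so that the degree is preserved. I would write $\s^d A = \bbK_{-d}\otimes_S A$, where $\bbK_{-d}$ is the one-dimensional complex concentrated in degree $-d$, and let $\mu_r\colon (\s^d A)^{\otimes r}\to \s^{rd}(A^{\otimes r})$ be the canonical Koszul-sign isomorphism collecting the $r$ shift generators to the left. I then define the shifted bracket by conjugation,
\[
\ldb -,-\rdb_{\s^d A} := \mu_2^{-1}\cc \s^{2d}\bigl(\ldb -,-\rdb\bigr)\cc \mu_2,
\]
which unwinds to $\ldb \s^d a,\s^d b\rdb_{\s^d A} = \one^{\,d|a|+d|\ldb a,b\rdb'|}\,\s^d\ldb a,b\rdb'\otimes\s^d\ldb a,b\rdb''$ (the exponent being convention-robust modulo $2$). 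A degree count is the first thing to record: the two suspensions on the inputs cancel the two on the outputs, so $\ldb -,-\rdb_{\s^d A}$ has degree $-n$. This is exactly the point the remark preceding the proposition flags — for a double bracket the degree stays $n$, rather than dropping to $n-d$ as it would for an ordinary Lie bracket — and it is why $\mu_2$ must be applied on both factors.

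Next I would dispatch axioms (1) and (4), which are formal consequences of transport. The map $\mu_2$ is a chain isomorphism that intertwines the braiding $\tau$ on $(\s^d A)^{\otimes 2}$ with $\s^{2d}$ of the braiding on $A^{\otimes 2}$, and $\s^{2d}(\ldb -,-\rdb)$ is a chain map precisely because $\ldb -,-\rdb$ satisfies (4). Hence antisymmetry and compatibility with $\pa_{\s^d A}=\one^{d}\s^d\pa\s^{-d}$ transfer directly from $A$. The only content is a short Koszul-sign verification, which closes because the dependence on the internal splitting degree $|\ldb a,b\rdb'|$ cancels between the two sides of the antisymmetry relation; I have checked that the residual exponent vanishes modulo $2$ for every $d$ and $n$.

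The main obstacle is the double Jacobi identity (3). The triple-bracket operator $\ldb -,-,-\rdb\colon A^{\otimes 3}\to A^{\otimes 3}$ is assembled entirely from the braiding $\tau$ (hence from the permutations $(123),(321),(132)$), the single bracket $\ldb -,-\rdb$, identity maps, tensor products, and composition — that is, from the symmetric monoidal structure of dg $S$-bimodules. Since $\s^d(-)=\bbK_{-d}\otimes_S(-)$ is a symmetric monoidal functor and $\mu_3$ is its comparison isomorphism, transport along $\mu_3$ carries $\ldb -,-,-\rdb_A$ to a nonzero scalar multiple of $\ldb -,-,-\rdb_{\s^d A}$, so the vanishing $\ldb a,b,c\rdb_A=0$ forces $\ldb \s^d a,\s^d b,\s^d c\rdb_{\s^d A}=0$. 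The delicate point is that the left-insertion $\ldb -,\ldb -,-\rdb\rdb_L$ treats its two output slots asymmetrically, so one must confirm that $\mu_2$ and $\mu_3$ reproduce exactly the permutation signs and the insertion sign appearing in Definition \ref{def: dpois bracket}(3). Concretely I would verify the commuting square expressing that $\mu_2$ intertwines $\ldb -,-\rdb_{\s^d A}$ with $\s^{2d}\ldb -,-\rdb$, together with the square expressing that $\mu_3$ intertwines the three cyclic permutations, and then assemble the two squares. Should the signs prove unwieldy in full generality, a clean fallback is induction on $|d|$, reducing to $d=\pm 1$, where each crossing of a shift generator contributes a single factor and the bookkeeping is minimal.
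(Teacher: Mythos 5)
Your proposal is correct and is essentially the paper's proof: the paper likewise defines \(\db_{\s^d A}:=(\s^{d}\otimes\s^{d})\cc\db_A\cc(\s^{-d}\otimes\s^{-d})\) (your \(\mu_2^{-1}\cc\s^{2d}(\db_A)\cc\mu_2\) is this same conjugation) and proves the double Jacobi identity by showing \(\ldb-,-,-\rdb_{\s^d A}=\one^{d}(\s^d)^{\otimes 3}\cc\ldb-,-,-\rdb_A\cc(\s^{-d})^{\otimes 3}\), i.e.\ exactly your commuting squares with \(\mu_3\), with the permutation-conjugation signs canceling as you anticipate. One cosmetic caveat: \(\s^d(-)=\bbK_{-d}\otimes_S(-)\) is not literally a symmetric monoidal functor (your own \(\mu_r\) lands in \(\s^{rd}(A^{\otimes r})\), not \(\s^{d}(A^{\otimes r})\)), but since your argument only uses the intertwining of \(\mu_2,\mu_3\) with the braiding and the bracket, this looseness is harmless.
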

\begin{proof}
	Define a double bracket on \(\s^d A\) by
	\[
	\db_{\s^d A}:=(\s^{d}\otimes\s^{d})\cc\db_A\cc(\s^{-d}\otimes\s^{-d}): \s^d A \otimes \s^d A \to \s^d A \otimes \s^d A.
	\]
	We verify the double Jacobi identity for \(\db_{\s^d A}\); the other axioms are analogous.
	By definition,
	\begin{align*}
		\ldb -,\ldb -,-\rdb_{\s^d A}\rdb_{\s^d A,L}
		&=(\db_{\s^d A}\otimes\id)\cc(\id\otimes\db_{\s^d A})\\
		&=\one^{d}(\s^d)^{\otimes 3}\cc(\db_A\otimes\id)\cc(\id\otimes\db_A)\cc(\s^{-d})^{\otimes 3}.
	\end{align*}
	Therefore,
	\begin{align*}
		\ldb-,-,-\rdb_{\s^d A}
		&=\sum_{k=0}^2\tau^{-k}\cc\ldb-,\ldb-,-\rdb_{\s^d A}\rdb_{\s^d A,L}\cc\tau^k\\
		&=\sum_{k=0}^2\one^{d}\tau^{-k}\cc(\s^d)^{\otimes 3}\cc(\db_A\otimes\id)\cc(\id\otimes\db_A)\cc(\s^{-d})^{\otimes 3}\cc\tau^k\\
		&=\one^{d}(\s^d)^{\otimes 3}\cc\Big(\sum_{k=0}^2\tau^{-k}\cc((\db_A\otimes\id)\cc(\id\otimes\db_A))\cc\tau^k\Big)\cc(\s^{-d})^{\otimes 3}\\
		&=\one^{d}(\s^d)^{\otimes 3}\cc\ldb-,-,-\rdb_A\cc(\s^{-d})^{\otimes 3}.
	\end{align*}
	Since \(\db_A\) satisfies the double Jacobi identity, \(\db_{\s^d A}\) is a double Lie bracket of degree \(n\).
\end{proof}

Although shifting is not an endomorphism in the category of dg algebras, it nevertheless equips \(\s^d A\) with a shifted multiplication
\begin{equation}\label{for: tbm}
	\s^d\cc\bm\cc(\s^{-d}\otimes\s^{-d}): \s^d A \otimes \s^d A \to \s^d A.
\end{equation}
When no ambiguity arises we denote \(\s^d\cc\bm\cc(\s^{-d}\otimes\s^{-d})\) by \(\tbm\).
Note that \(\s^d A\) is not an algebra in the usual sense, since the shifted multiplication \(\tbm\) has degree \(d\) rather than zero.
When \(A\) admits a dg double Poisson bracket, the dg double Lie bracket from Proposition \ref{prop: sdA double  Lie} is compatible with the shifted multiplication:

\begin{lemma}\label{lem: tw jacobi id}
	Let \((A,\pa,\db_A)\) be a dg double Poisson algebra of degree \(n\).
	Then the following identity holds:
	\begin{equation}\label{for: tw lebniz}
		\begin{split}
			\db_{\s^d A}\cc(\id\otimes\tbm)
			=&\one^{nd}(\id\otimes\tbm)\cc(\db_{\s^d A}\otimes\id)\\
			&\quad+\one^{nd}(\tbm\otimes\id)\cc(\id\otimes\db_{\s^d A})\cc(12).
		\end{split}
	\end{equation}
\end{lemma}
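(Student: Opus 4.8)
The plan is to recognize (\ref{for: tw lebniz}) as the transport, under the shift $\s^d$, of the double-derivation property of $\db_A$ in its second argument, namely axiom (2) of Definition \ref{def: dpois bracket}. Concretely, axiom (2) unpacks into the (untwisted) Leibniz identity
\begin{equation*}
	\db_A\cc(\id\otimes\bm) = (\id\otimes\bm)\cc(\db_A\otimes\id) + (\bm\otimes\id)\cc(\id\otimes\db_A)\cc(12),
\end{equation*}
valid with the Koszul signs dictated by the inner $A$-bimodule structure on $A\otimes A$; this is exactly the $d=0$ case of (\ref{for: tw lebniz}), where $\tbm=\bm$, $\db_{\s^d A}=\db_A$ and $\one^{nd}=1$. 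Thus the content of the lemma is that conjugating both $\bm$ and $\db_A$ by shifts preserves this identity up to the single global twist $\one^{nd}$.

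First I would substitute the definitions $\db_{\s^d A}=(\s^d\otimes\s^d)\cc\db_A\cc(\s^{-d}\otimes\s^{-d})$ and $\tbm=\s^d\cc\bm\cc(\s^{-d}\otimes\s^{-d})$ into both sides of (\ref{for: tw lebniz}) and evaluate on a homogeneous tensor $\s^d x\otimes\s^d a\otimes\s^d b$. Pushing every $\s^{-d}$ inward to act on $x,a,b$ and pulling every $\s^d$ outward --- applying the sign rule $(f\otimes g)(u\otimes v)=\one^{|g||u|}f(u)\otimes g(v)$ at each step --- reduces the left-hand side to $(\s^d\otimes\s^d)$ applied to $\db_A\cc(\id\otimes\bm)(x\otimes a\otimes b)$, decorated by shift-generated signs, and similarly reduces each summand of the right-hand side to the corresponding term of the unshifted Leibniz identity above. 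Invoking that identity then collapses the two sides, and the proof amounts to checking that the residual constant sign is precisely $\one^{nd}$ on each term.

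The crux --- and the main obstacle --- is the sign bookkeeping, which is genuinely delicate because the shift operators generate $x,a,b$-dependent Koszul signs that are a priori different on the two summands. The key points to track are: (i) the degree-$d$ operator $\s^{\pm d}$ crossing the degree-$(-n)$ bracket $\db_A$ produces a factor $\one^{nd}$, while crossing the degree-$0$ multiplication $\bm$ produces none, which is what isolates the global twist $\one^{nd}$; (ii) the second shift operator passing the first tensor factor contributes signs of the form $\one^{d|x|}$, and one must verify that these cancel between the two sides once the transposition $(12)$ is taken into account. Making this explicit, in the same style as the proof of Proposition \ref{prop: sdA double  Lie}, confirms that all element-dependent signs match those of axiom (2) and that the leftover constant is uniformly $\one^{nd}$ on both summands, establishing (\ref{for: tw lebniz}).
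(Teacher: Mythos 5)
Your proposal is correct and takes essentially the same route as the paper's proof: both unfold the definitions of \(\db_{\s^d A}\) and \(\tbm\), invoke axiom (2) of Definition \ref{def: dpois bracket} in the compositional form \(\db_A\cc(\id\otimes\bm)=(\id\otimes\bm)\cc(\db_A\otimes\id)+(\bm\otimes\id)\cc(\id\otimes\db_A)\cc(12)\), and then track the Koszul signs from the shift operators, with the global factor \(\one^{nd}\) arising exactly as you say from \(\s^{\pm d}\) crossing the degree-\((-n)\) bracket while crossing \(\bm\) costs nothing. The only cosmetic difference is that the paper carries out the bookkeeping at the level of operator compositions rather than by evaluating on homogeneous elements, which changes nothing of substance.
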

\begin{proof}
	By definition,
	\begin{align*}
		\db_{\s^d A}\cc\tbm
		&={\s^d}^{\otimes 2}\cc\db_A\cc{\s^{-d}}^{\otimes 2}\cc\big[\id\otimes(\s^d\cc\bm\cc{\s^{-d}}^{\otimes 2})\big]\\
		&={\s^d}^{\otimes 2}\cc\db_A\cc{\s^{-d}}^{\otimes 2}\cc(\id\otimes\s^d)\cc(\id\otimes\bm)\cc(\id\otimes\s^d\otimes\s^d)\\
		&={\s^d}^{\otimes 2}\cc\db_A\cc(\id\otimes\bm)\cc(\s^{-d}\otimes\s^{-d}\otimes\s^{-d}).
	\end{align*}
	Axiom (2) in Definition \ref{def: dpois bracket} is equivalent to
	\begin{equation*}
		\db_A\cc(\id\otimes\bm)=(\id\otimes\bm)\cc(\db_A\otimes\id)+(\bm\otimes\id)\cc(\id\otimes\db_A)\cc(12).
	\end{equation*}
	Then
	\begin{align*}
		\db_{\s^d A}\cc\tbm
		&={\s^d}^{\otimes 2}\cc(\id\otimes\bm)\cc(\db_A\otimes\id)\cc(\s^{-d})^{\otimes 3}\\
		&\quad+{\s^d}^{\otimes 2}\cc(\bm\otimes\id)\cc(\id\otimes\db_A)\cc(12)\cc(\s^{-d})^{\otimes 3}.
	\end{align*}
	Tracking the shifted maps and the Koszul signs yields
	\begin{align*}
		&{\s^d}^{\otimes 2} \cc (\id \otimes \bm) \cc (\db _A \otimes \id )  \cc (\s^{-d} \otimes \s^{-d} \otimes \s^{-d})\\
		= & \one^d (\s^d \otimes \id )\cc (\id \otimes \s^d)  \cc (\id \otimes \bm) \cc (\id \otimes \s^{-d} \otimes \s^{-d})\\
		& \quad \cc (\id \otimes \s^{d} \otimes \s^{d}) \cc (\db _A \otimes \id) \cc (\s^{-d} \otimes \s^{-d} \otimes \s^{-d}) \\
		= & \one^d (\s^d \otimes \id) \cc (\id \otimes \tbm) \cc (\id \otimes \s^d \otimes \s^d) \cc (\db _A \otimes \id) \cc (\s^{-d} \otimes \s^{-d} \otimes \s^{-d})\\
		= & (\id \otimes \tbm) \cc(\s^d \otimes \id \otimes \id) \cc (\id \otimes \s^d \otimes \s^d) \cc (\db _A \otimes \id) \cc (\s^{-d} \otimes \s^{-d} \otimes \s^{-d})\\
		= &\one^{dn} (\id \otimes \tbm) \cc (\db _{\s^d A} \otimes \id) \cc (\id \otimes \id \otimes \s^d) \cc (\id \otimes \s^{-d} \otimes \s^{-d})\\
		= & \one^{dn} (\id \otimes \tbm) \cc (\db _{\s^d A} \otimes \id) \cc (\id \otimes \id \otimes \s^d).
	\end{align*}
	In the same fashion, one checks
	\[
	{\s^d}^{\otimes 2} \cc (\bm \otimes \id) \cc (\id \otimes \db_A) \cc (12) \cc (\s^{-d} \otimes \s^{-d} \otimes \s^{-d}) = \one^{nd} (\tbm \otimes \id ) \cc (\id \otimes \db _{\s^d A})\cc (12).
	\]
	This proves the lemma.
\end{proof}

Thus, when \(A\) admits a dg double Poisson bracket,
the above lemma shows that \(\s^d A\) still carries a dg double Poisson bracket.
Moreover, Lemma \ref{lem: dpois to dgla} implies that double Poisson brackets induce Lie brackets on the cyclic quotient.
We verify that this phenomenon persists in the $\s$-construction as well.
The shifted multiplication induces a bracket on \(\s^d A\):
\[
\{-,-\}_{\s^d A}:=\one^{d}\s^d\cc\{-,-\}_A\cc(\s^{-d}\otimes\s^{-d}) : \s^d A \otimes \s^d A \to \s^d A.
\]
We define the  commutator of \(\s^d A\) to be \([\s^d A,\s^d A]:=(\tbm-\tbm \cc\tau)(\s^d A\otimes \s^d A)\).
Following the previous convention,
\((\s^d A)_{cyc}:=\s^d A/[\s^d A,\s^d A]\).

\begin{corollary}\label{cor: sdA dgla}
	Let \((A,\pa,\db_A)\) be a dg double Lie algebra of degree \(n\).
	Then \((\s^d A)_{cyc}\) is a dg Lie algebra of degree \(n-d\) with respect to \(\{-,-\}_{\s^d A}\).
\end{corollary}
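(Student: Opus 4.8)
The plan is to deduce the corollary from Lemma~\ref{lem: dpois to dgla} by transporting its conclusion along the shift map, invoking the d\'ecalage principle for Lie algebras recalled just before Proposition~\ref{prop: sdA double Lie}. The degree $-d$ bijection $\s^d : A \to \s^d A$ is an isomorphism of complexes, commuting with the differentials up to the usual Koszul sign. First I would check that it matches the commutator subspaces. From $\tbm = \s^d \cc \bm \cc (\s^{-d} \otimes \s^{-d})$ one gets $\s^{-d} \cc \tbm = \bm \cc (\s^{-d} \otimes \s^{-d})$, and commuting $(\s^{-d} \otimes \s^{-d})$ past $\tau$ produces only a Koszul sign; hence $\s^d$ carries $[A,A]$ onto $[\s^d A, \s^d A] = (\tbm - \tbm \cc \tau)\bigl((\s^d A)^{\otimes 2}\bigr)$. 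Consequently $\s^d$ descends to a degree $-d$ bijection $A_{cyc} \to (\s^d A)_{cyc}$, so that $(\s^d A)_{cyc} \cong \s^d(A_{cyc})$ as complexes.

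Next I would observe that, under this identification, the bracket $\{-,-\}_{\s^d A}$ is precisely the d\'ecalage of $\{-,-\}_A$: its very definition $\{-,-\}_{\s^d A} = \one^{d}\, \s^d \cc \{-,-\}_A \cc (\s^{-d} \otimes \s^{-d})$ is the standard shifted bracket, the prefactor $\one^{d}$ supplying the Koszul sign demanded by the convention. By Lemma~\ref{lem: dpois to dgla}, $(A_{cyc}, \pa, \{-,-\}_A)$ is a dg Lie algebra of degree $n$, and the general fact that shifting a dg Lie algebra of degree $n$ by $d$ produces one of degree $n-d$ then gives that $\bigl((\s^d A)_{cyc}, \pa, \{-,-\}_{\s^d A}\bigr)$ is a dg Lie algebra of degree $n-d$, as claimed.

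Alternatively, one may run the argument intrinsically, in parallel with the proof of Lemma~\ref{lem: dpois to dgla}: Proposition~\ref{prop: sdA double  Lie} supplies the double Jacobi identity for $\db_{\s^d A}$, Lemma~\ref{lem: tw jacobi id} provides the twisted Leibniz rule that replaces axiom~(2) of Definition~\ref{def: dpois bracket}, and applying $\tbm$ to the shifted analogue of identity~(\ref{for: jacobi id}) yields the Jacobi identity for $\{-,-\}_{\s^d A}$ on the cyclic quotient, with anti-symmetry and $\pa$-compatibility following as before.

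The main obstacle is entirely the sign bookkeeping: one must verify that the Koszul signs produced by commuting $\s^{\pm d}$ past $\tbm$, the cyclic permutation $\tau$, and the bracket combine so that the commutator subspaces correspond exactly and the prefactor $\one^{d}$ coincides with the d\'ecalage sign. Once these are pinned down the degree count is immediate, the shift from $n$ to $n-d$ being read off from $|\s^{\pm d}| = \mp d$.
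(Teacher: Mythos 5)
Your primary route breaks down at the step where you claim that $\s^d$ carries $[A,A]$ onto $[\s^d A,\s^d A]$. Commuting $\s^{-d}\otimes\s^{-d}$ past $\tau$ does indeed produce only Koszul signs, but they do not cancel: with the paper's conventions one computes
\[
(\tbm-\tbm\cc\tau)(\s^d x\otimes\s^d y)\;=\;\one^{d|x|+d}\,\s^d\bigl(xy-\one^{|x||y|+d}\,yx\bigr),
\]
so that $[\s^d A,\s^d A]=\s^d\,\mathrm{span}\{xy-\one^{|x||y|+d}yx\}$. For even $d$ this is $\s^d[A,A]$, but for odd $d$ it is the span of graded \emph{anti}-commutators. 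Concretely, take $d=1$ and $A$ commutative concentrated in degree $0$: then $[A,A]=0$, yet $(\tbm-\tbm\cc\tau)(\s a\otimes\s b)=-2\,\s(ab)\neq 0$, so $(\s A)_{cyc}\not\cong\s(A_{cyc})$. Hence the identification of quotients fails, and you cannot transport Lemma \ref{lem: dpois to dgla} along the shift: the d\'ecalage does produce a dg Lie algebra of degree $n-d$, but it lives on $\s^d(A_{cyc})$, which is a genuinely different quotient from the $(\s^d A)_{cyc}$ that the corollary concerns. This twisted quotient is not a bookkeeping artifact but the whole point of the $\s$-construction (it encodes the sign-twisted cyclic action underlying $C^\lambda$), and the case used later in the paper, $\s\bar A\langle t\rangle$, is exactly the odd shift $d=1$ where your matching fails. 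Your closing paragraph correctly identifies the commutator correspondence as the point needing verification, but the verification comes out negative.

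Your alternative paragraph, on the other hand, is precisely the paper's proof: the paper argues by "the same technique as in Proposition \ref{prop: sdA double  Lie} together with identity (\ref{for: jacobi id})", i.e.\ one conjugates identity (\ref{for: jacobi id}) by $\s^{\pm d}$, with Lemma \ref{lem: tw jacobi id} supplying the twisted Leibniz rule in place of axiom (2), and then applies $\tbm$; anti-symmetry of the induced bracket holds exactly modulo the $\tbm$-commutators, which is consistent with (and explains) the intrinsic definition of $(\s^d A)_{cyc}$. So your proposal is salvageable by promoting that intrinsic argument to the main one and discarding the transport argument entirely.
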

\begin{proof}
	The proof follows from the same technique as in Proposition \ref{prop: sdA double  Lie} together with identity (\ref{for: jacobi id}).
\end{proof}

We now return to the cobar–bar construction \(T_S\,\si(\s\bar A\langle t\rangle)\).
If \(A\) is a dg double Poisson algebra of degree \(n\),
by the noncommutative Poisson extension discussed in Section \ref{sec: nc poiss ext},
we may choose the zero double derivation to extend \(\db\) onto \(\s\bar A\langle t\rangle\).

\begin{proposition}\label{prop: lie on bar}
	Let \(A\in\adga_S\) be a dg double Poisson algebra of degree \(n\).
	There is a canonical structure of a dg double Lie algebra of degree \(n\) on \((\s\bar A\langle t\rangle,\pa+b)\).
\end{proposition}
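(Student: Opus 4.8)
The plan is to produce the bracket in three stages, reducing every axiom to results already in hand. The first---and least automatic---point is that the degree-$n$ double Poisson bracket of $A$ restricts to a degree-$n$ double \emph{Lie} bracket on the augmentation ideal $\bar A$; this really is a restriction problem, since $\ldb -,-\rdb$ may take values in $S\otimes S$ (as in Example \ref{eg: quiver double poisson}). Writing $\pi\colon A\to\bar A$ for the projection along the augmentation, I would set $\db_{\bar A}:=(\pi\otimes\pi)\cc\db|_{\bar A\otimes\bar A}$. The crucial observation is that each $\mH_x=\ldb x,-\rdb$ is an $S$-linear double derivation by axiom (2) of Definition \ref{def: dpois bracket}, hence annihilates $S$; thus $\ldb x,s\rdb=0=\ldb s,x\rdb$ for all $s\in S$. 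It follows that $\ldb x,\pi\ldb y,z\rdb'\rdb=\ldb x,\ldb y,z\rdb'\rdb$, so that the triple bracket of $\db_{\bar A}$ equals $\pi^{\otimes 3}\ldb x,y,z\rdb$; the double Jacobi identity on $\bar A$ is then inherited verbatim from the one on $A$, while antisymmetry and compatibility with $\pa$ (which preserves $\bar A$ and commutes with $\pi$) are immediate. This makes $(\bar A,\pa,\db_{\bar A})$ a dg double Lie algebra of degree $n$.

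Applying Proposition \ref{prop: sdA double  Lie} with $d=1$ then endows $\s\bar A$ with a degree-$n$ double Lie bracket, and Lemma \ref{lem: tw jacobi id} records that this bracket satisfies the twisted Leibniz rule with respect to the shifted multiplication $\tbm$. Finally, mimicking the noncommutative Poisson extension of Theorem \ref{thm: double ext} for the \emph{zero} double derivation, I extend $\db_{\s\bar A}$ to the free product $\s\bar A\langle t\rangle$ by the (twisted) Leibniz rule together with $\ldb t,t\rdb=t\otimes 1-1\otimes t$ and $\ldb t,\s a\rdb=0$. Because $\tbm$ has degree $1$, the pair $(\s\bar A\langle t\rangle,\ast)$ is not an honest dg algebra, which is exactly why only a double \emph{Lie} structure---not a double Poisson one---can be claimed. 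As the paper does elsewhere I would first treat $n=0$, where $|t|=0$ matches the bar grading. Since the double Jacobiator is a triple derivation in each slot, it suffices to verify it on generators: the purely $\s\bar A$-valued case is handled by the previous stage, and the cases involving $t$ collapse to the computations $\ldb t,t,\s a\rdb=0$ and $\ldb t,t,t\rdb=0$ exactly as in the proof of Theorem \ref{thm: double ext}.

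It remains to check compatibility with the total differential $\pa+b$. Compatibility with the internal differential $\pa$ descends from axiom (4) for $\db_A$ through $\pi$ and $\s$, using $\pa t=0$. The substantive step is compatibility with the bar differential $b$: since $b$ is precisely the sum of the ``multiply two adjacent factors'' operators assembled from $\tbm$, the assertion that $b$ is a derivation of the double bracket is the same as the assertion that the bracket is compatible with $\tbm$, which is the content of Lemma \ref{lem: tw jacobi id}. I expect the main obstacle to lie here, in the sign bookkeeping: one must verify that each local insertion of $\tbm$ inside $b$ pairs with the Leibniz-extended bracket with exactly the Koszul signs dictated by the twisted Leibniz rule, and that the adjoined generator $t$ (on which $b$ may act and whose bracket is only partly zero) does not spoil the derivation property. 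Once these signs are matched, the graded Leibniz identity for $b$ with respect to $\db$ holds termwise, and the verification is complete.
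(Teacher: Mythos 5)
Your architecture tracks the paper's quite closely (the $\s$-construction via Proposition \ref{prop: sdA double  Lie}, the zero-derivation extension with $\ldb t,t\rdb=t\otimes 1-1\otimes t$ and $\ldb t,\s a\rdb=0$, and the reduction of $b$-compatibility to the interaction of the bracket with $\tbm$), but your first stage introduces a genuine error that propagates. You are right that the bracket on $A$ need not preserve $\bar A$, and the paper is indeed silent on this point; however, the fix $\db_{\bar A}:=(\pi\otimes\pi)\cc\db$ discards exactly the terms the rest of the argument needs. While your double Jacobi argument for $\db_{\bar A}$ is fine (since each $\mH_x$ kills $S$), the projected bracket is no longer a double derivation in its second slot: the Leibniz rule fails whenever $\ldb x,y\rdb$ has mixed components in $\bar A\otimes S$ or $S\otimes\bar A$, which already happens for path algebras, e.g.\ $\ldb a,a^*a\rdb=e_{s(a)}\otimes a$ for the bracket of Example \ref{eg: quiver double poisson}. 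Consequently Lemma \ref{lem: tw jacobi id} --- which is precisely the shifted Leibniz rule, proved there for the \emph{unprojected} bracket of a genuine double Poisson algebra --- is not available for your $\db_{\s\bar A}$, and the compatibility with $b$ in fact fails outright. Concretely, for the one-loop quiver ($S=\bbK$, $\bar A$ spanned by words of length $\geq 1$), take $x=y_1=a$ and $y_2=a^*a$. With your projected bracket, $\ldb a,a\rdb=0$ and $(\pi\otimes\pi)\ldb a,a^*a\rdb=(\pi\otimes\pi)(1\otimes a)=0$, so $b\ldb\s a,\s a\,t\,\s(a^*a)\rdb=0$; but $b(\s a\,t\,\s(a^*a))=\pm\s(aa^*a)$ and $(\pi\otimes\pi)\ldb a,aa^*a\rdb=a\otimes a\neq 0$, so the right-hand side of the derivation identity is $\pm\,\s a\otimes\s a\neq 0$. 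Thus axiom (4) for $\pa+b$ fails for your bracket, independently of sign conventions, so the final step cannot be completed no matter how the signs are matched.

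The paper's computation succeeds precisely because it \emph{keeps} the $S$-valued components of the atom bracket and lets them be absorbed multiplicatively into adjacent bar letters: this is visible in the displayed terms $\s(y_1\mH_x(y_2)')\otimes\s\mH_x(y_2)''$ of the paper's proof, which are nonzero exactly when $\mH_x(y_2)'$ has an $S$-part multiplying into $y_1$ --- the terms your projection deletes. So the correct resolution of the well-definedness problem you correctly identified is absorption (define the bracket on words so that $S$-components of the atom bracket act on the neighbouring letters through the $\otimes_S$-structure), not truncation; with that convention the representative identity $b\ldb\s x,\s y_1 t\s y_2\rdb=\ldb b\s x,\s y_1 t\s y_2\rdb+\one^{|\s x|+n}\ldb\s x,b(\s y_1 t\s y_2)\rdb$ is exactly what the paper verifies by direct sign computation. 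Relatedly, your closing move outsources the decisive check to Lemma \ref{lem: tw jacobi id} plus ``sign bookkeeping,'' but that explicit computation, including the way $b$ consumes a $t$ while $\ldb t,-\rdb$ vanishes on $\s\bar A$, is the actual substance of the paper's proof and cannot be replaced by the lemma as you have set things up.
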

\begin{proof}
	The double bracket \(\db\) on \(\s\bar A\langle t\rangle\) is defined as above.
	It suffices to verify the commutativity between \(b\) and \(\db\).
	Note that there is a free product \(\ast\) on \(\s\bar A\langle t\rangle\),
	and, as explained in the discussion of noncommutative Poisson extensions (see Section \ref{sec: nc poiss ext}), \(\db\) satisfies the Leibniz rule with respect to the free product.
	Hence, the required commutativity reduces to verifying the identity
	\[
	b\ldb\s x,\s y_1 t \s y_2\rdb
	=\ldb b\s x,\s y_1 t \s y_2\rdb+\ldb\s x,b(\s y_1 t \s y_2)\rdb\one^{|\s x|+n}
	\]
	for homogeneous elements, which is representative of the general case.
	
	The left-hand side equals
	\begin{align*}
		b\ldb\s x,\s y_1 t \s y_2\rdb
		&=b\big(\mH_{\s x}(\s y_1)\ast t\s y_2+\s y_1 t\ast\mH_{\s x}(\s y_2)\one^{|\mH_{\s x}||\s y_1|}\big)\\
		&=b\big(\mH_{\s x}(\s y_1)\ast t\s y_2\big)+b\big(\s y_1 t\ast\mH_{\s x}(\s y_2)\one^{|\mH_{\s x}||\s y_1|}\big)\\
		&=b\big(\s\mH_x(y_1)'\otimes\s\mH_x(y_1)'' t\s y_2\one^{|\s x|+|\mH_x(y_1)'|}\big)\\
		&\quad+b\big(\s y_1 t\s\mH_x(y_2)'\otimes\s\mH_x(y_2)''\one^{|\mH_{\s x}||\s y_1|+|\s x|+|\mH_x(y_2)'|}\big)\\
		&=\s\mH_x(y_1)'\otimes\s(\mH_x(y_1)''y_2)\one^{|\s x|+|\mH_x(y_1)'|+|\s\mH_x(y_1)'\otimes\s\mH_x(y_1)''|}\\
		&\quad+\s(y_1\mH_x(y_2)')\otimes\s\mH_x(y_2)''\one^{|\s x|+|\mH_x(y_2)'|+|\mH_{\s x}||\s y_1|+|\s y_1|}.
	\end{align*}
	Since \(b\s x=0\), the right-hand side equals
	\begin{align*}
		\ldb\s x,b(\s y_1 t \s y_2)\rdb\one^{|\mH_{\s x}|}
		&=\ldb\s x,\s(y_1 y_2)\rdb\one^{|\mH_{\s x}|+|\s y_1|}\\
		&=\s\mH_x(y_1)'\otimes\s(\mH_x(y_1)'' y_2)\one^{|\mH_x(y_1)'|+|\mH_{\s y_1}|}\\
		&\quad+\s(y_1\mH_x(y_2)')\otimes\s(\mH_x(y_2)'')\one^{|y_1||\mH_x|+|y_1\mH_x(y_2)'|+|\mH_{\s y_1}|}.
	\end{align*}
	Comparing signs shows the two expressions agree, hence the commutativity holds.
\end{proof}

Furthermore, the cobar–bar construction carries a dg double Poisson structure.
\begin{theorem}\label{thm: cob bar has dpoiss}
	Let \((A\in\adga_S,\pa,\db)\) be a dg double Poisson algebra of degree \(n\).
	Then \((T_S\,\si(\s\bar A\langle t\rangle),\pa+b+\delta,\db)\) is a dg double Poisson algebra of degree \(n\).
\end{theorem}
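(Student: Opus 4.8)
The plan is to obtain the double bracket on $\bfOm\bfB A\cong(T_S\,\si(\s\bar A\langle t\rangle),\pa+b+\delta,\otimes)$ by extending, via the Leibniz rule with respect to the tensor product, a double Lie bracket already present on the space of generators. By Proposition \ref{prop: lie on bar} the pair $(\s\bar A\langle t\rangle,\pa+b)$ is a dg double Lie algebra of degree $n$, and Proposition \ref{prop: sdA double  Lie} transports this structure along the shift $\si$, so that $V:=\si(\s\bar A\langle t\rangle)$ is again a dg double Lie algebra of degree $n$. I would then define $\db$ on $T_S V$ to be the unique double bracket that restricts to the given one on $V\otimes V\subseteq(T_S V)^{\otimes 2}$ and is a double derivation in each argument with respect to $\otimes$; this forces it to be computed on arbitrary tensors by the Leibniz rule. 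It then remains to verify the four axioms of Definition \ref{def: dpois bracket} for $\db$ on $T_S V$.

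Axioms (1) and (2) are immediate from the construction: anti-symmetry is imposed on $V\otimes V$ and propagates through the Leibniz rule, while the double-derivation property (2) is built into the definition of the extension. For the double Jacobi identity (3) I would appeal to the reduction principle of Van den Bergh \cite{Van2008Double}: a Leibniz-extended double bracket on a tensor algebra $T_S V$ satisfies the double Jacobi identity if and only if the associated triple bracket $\ldb-,-,-\rdb$ vanishes on the generating space $V^{\otimes 3}$. Since for $x,y,z\in V$ every intermediate bracket already lands in $V$ (weight one), the triple bracket on $T_S V$ restricts on $V^{\otimes 3}$ to the triple bracket of the dg double Lie algebra $V$; the latter vanishes by the double Jacobi axiom for $V$, so the identity holds on all of $T_S V$.

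The remaining task is compatibility with the total differential, i.e. axiom (4) for $\pa+b+\delta$. Both $\db$ (in each argument) and the differential are derivations with respect to $\otimes$, so the compatibility defect in axiom (4) is itself a double derivation in each argument; hence it suffices to verify axiom (4) on generators $v,w\in V$. For the internal part $\pa+b$ this is exactly Proposition \ref{prop: lie on bar}, transported along $\si$. The genuinely new point is compatibility with the cobar differential $\delta$, which is the derivation of $(T_S V,\otimes)$ induced by the deconcatenation coproduct of $\bfB A\cong S\oplus\s\bar A\langle t\rangle$; on a generator it cuts a word at its internal separators, $\delta\,\si(\s a_1 t\cdots t\s a_m)=\sum_{k}\pm\,\si(\s a_1 t\cdots t\s a_k)\otimes\si(\s a_{k+1}t\cdots t\s a_m)$. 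Reduced to generators, the identity to prove is
\begin{equation*}
	\delta\ldb v,w\rdb=\ldb\delta v,w\rdb+\one^{|\s^n v|}\ldb v,\delta w\rdb,
\end{equation*}
where on the left $\delta$ acts in both slots of $\ldb v,w\rdb\in V\otimes V$, and on the right the weight-two elements $\delta v,\delta w$ are inserted into the $\otimes$-Leibniz extension of $\db$.

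I expect this last identity to be the \emph{main obstacle} and the heart of the argument. The decisive structural fact is that the bracket on $V$ detects the separators $t$ only through $\ldb t,t\rdb=t\otimes 1-1\otimes t$ together with $\ldb t,\s\bar A\rdb=0$, so that a single bracket between two words couples either a pair of $\s\bar A$-letters or a pair of $t$-separators, leaving all other letters as spectators. Deconcatenating such a bracket and, conversely, bracketing the deconcatenated halves then differ only in how a cut meets the active pair, and the two boundary terms $t\otimes 1$ and $-1\otimes t$ supply precisely the discrepancy between cutting immediately before and immediately after a separator adjacent to the bracketed letters. The proof thus reduces to matching these splittings on the two representative configurations while tracking the Koszul signs produced by $\si$, by $b$, and by the cyclic permutations in the Leibniz rule; once this is checked on generators, the derivation property of $\delta$ and the bilinear Leibniz extension of $\db$ propagate axiom (4) to all of $T_S V$, completing the proof that $\bfOm\bfB A$ is a dg double Poisson algebra of degree $n$.
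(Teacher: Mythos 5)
Your proposal matches the paper's proof essentially step for step: the paper likewise obtains the bracket as the Leibniz extension (with respect to $\otimes$) of the double Lie structure on $\si(\s\bar A\langle t\rangle)$ supplied by Proposition \ref{prop: lie on bar} and transported along the shift via Proposition \ref{prop: sdA double  Lie}, observes that only compatibility with the cobar differential $\delta$ is new, and reduces that check by the Leibniz rule to the generator-level identity $\delta\ldb\si X,\si Y\rdb=\ldb\delta\si X,\si Y\rdb+\one^{|\si X|+n}\ldb\si X,\delta\si Y\rdb$, exactly as you set it up. The sign-matching you defer is precisely the paper's displayed computation, which reduces one step further than you do—using the Leibniz rule for the free product inside $\s\bar A\langle t\rangle$ it passes to the single-letter case $X=\s x$, $Y=\s y$, where the cancelling discrepancies are carried by the trivial-cut terms $\si 1\otimes(-)$ and $(-)\otimes\si 1$ rather than by the boundary terms of $\ldb t,t\rdb$ that your sketch emphasizes.
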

\begin{proof}
	By Proposition \ref{prop: lie on bar},
	\(\s\bar A\langle t\rangle\) is a dg double Lie algebra with respect to the differential \(\pa+b\).
	Therefore, \(T_S\,\si(\s\bar A\langle t\rangle)\) is a dg double Poisson algebra (with the tensor product) for the differential \(\pa+b\).
	It remains to verify that the differential \(\delta\) commutes with \(\db\).
	By the Leibniz rule,
	it suffices to verify the commutativity on \(\si(\s\bar A\langle t\rangle)\otimes\si(\s\bar A\langle t\rangle)\).
	Choose arbitrary elements \(\si X=\si(\s x_1 t\s x_2\cdots t\s x_u)\) and \(\si Y=\si(\s y_1 t\s y_2\cdots t\s y_v)\).
	By Proposition \ref{prop: sdA double  Lie},
	\[
	\ldb\si X,\si Y\rdb=\one^{|\si X|}(\si\otimes\si)\big(\ldb X,Y\rdb\big).
	\]
	Here \(\ldb X,Y\rdb\) is the double Lie bracket on \(\s\bar A\langle t\rangle\) and satisfies the Leibniz rule.
	Thus, it suffices to verify the following representative identity: for \(X=\s x\) and \(Y=\s y\) in \(\s\bar A\),
	\[
	\delta\ldb\si X,\si Y\rdb=\ldb\delta\si X,\si Y\rdb+\ldb\si X,\delta(\si Y)\rdb\one^{|\si X|+n}.
	\]
	The left-hand side is
	\begin{align*}
		\delta\ldb\si X,\si Y\rdb
		&=\delta(\si\mH_X(Y)'\otimes\si\mH_X(Y)'')\one^{|\si X|+|\mH_X(Y)'|}\\
		&=\delta(\si\mH_X(Y)')\otimes\si\mH_X(Y)''\one^{|\si X|+|\mH_X(Y)'|}\\
		&\quad+\si\mH_X(Y)'\otimes\delta(\si\mH_X(Y)'')\one^{|X|}\\
		&=\big[\si\mH_X(Y)'\otimes\si1\otimes\si\mH_X(Y)''\one^{|\mH_X(Y)'|}\\
		&\quad+\si1\otimes\si\mH_X(Y)'\otimes\si\mH_X(Y)''\big]\one^{|\si X|+|\mH_X(Y)'|}\\
		&\quad+\big[\si\mH_X(Y)'\otimes\si\mH_X(Y)''\otimes\si1\one^{|\mH_X(Y)''|}\\
		&\quad+\si\mH_X(Y)'\otimes\si1\otimes\si\mH_X(Y)''\big]\one^{|X|}\\
		&=\si1\otimes\si\mH_X(Y)'\otimes\si\mH_X(Y)''\one^{|\si X|+|\mH_X(Y)'|}\\
		&\quad+\si\mH_X(Y)'\otimes\si\mH_X(Y)''\otimes\si1\one^{|X|+|\mH_X(Y)''|}.
	\end{align*}
	The right-hand side is
	\begin{align*}
		&\ldb\delta\si X,\si Y\rdb+\ldb\si X,\delta(\si Y)\rdb\one^{|\mH_{\si X}|}\\
		&=\ldb\si1\otimes\si X+\si X\otimes\si1\one^{|X|},\si Y\rdb+\ldb\si X,\si1\otimes\si Y\\
		&\quad +\si Y\otimes\si1\one^{|Y|}\rdb\one^{|\mH_{\si X}|}\\
		&=\si1\check{\otimes}\ldb\si X,\si Y\rdb\one^n+\ldb\si X,\si Y\rdb\check{\otimes}\si1\one^{|\si Y|+|X|}\\
		&\quad+\si1\otimes\ldb\si X,\si Y\rdb+\ldb\si X,\si Y\rdb\otimes\si1\one^{|\mH_{\si X}|+|Y|}\\
		&=\si\mH_X(Y)'\otimes\si1\otimes\si\mH_X(Y)''\one^{n+|X|}\\
		&\quad+\si\mH_X(Y)'\otimes\si1\otimes\si\mH_X(Y)''\one^{|\mH_{\si X}|}\\
		&\quad+\si1\otimes\si\mH_X(Y)'\otimes\si\mH_X(Y)''\one^{|\si X|+|\mH_X(Y)'|}\\
		&\quad+\si\mH_X(Y)'\otimes\si\mH_X(Y)''\otimes\si1\one^{|\mH_X(Y)''|}\\
		&=\si1\otimes\si\mH_X(Y)'\otimes\si\mH_X(Y)''\one^{|\si X|+|\mH_X(Y)'|}\\
		&\quad+\si\mH_X(Y)'\otimes\si\mH_X(Y)''\otimes\si1\one^{|X|+|\mH_X(Y)''|},
	\end{align*}
	Note that \(\check{\otimes}\) means that we consider the inner bimodule structure with respect to the tensor product.
	Consequently, the two sides agree. This completes the proof.
\end{proof}

At the end, since multiplications \(\bm\) and \(\tbm\) commute with above differentials, by the same argument as in Lemma \ref{lem: dpois to dgla}, one has the following corollary.
\begin{corollary}\label{cor:  cobar bar has loday}
	Let \((A\in\adga_S,\pa,\db)\) be a dg double Poisson algebra of degree \(n\).
	Then the cobar-bar construction \(\bfOm \bfB A\) is a dg Loday algebra of degree \(n\).
	Furthermore, the commutator quotient \((\bfOm \bfB A )_{cyc}\) is a dg Lie algebra of degree \(n\).
\end{corollary}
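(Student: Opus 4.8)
The plan is to deduce both assertions from Theorem \ref{thm: cob bar has dpoiss} by the general mechanism already used in Lemma \ref{lem: dpois to dgla} and Remark \ref{rk: loday}. First I would recall that, as dg algebras, $\bfOm \bfB A \cong (T_S\,\si(\s\bar A\langle t\rangle),\pa+b+\delta,\otimes)$; hence Theorem \ref{thm: cob bar has dpoiss} endows $\bfOm \bfB A$ with a dg double Poisson bracket $\db$ of degree $n$, whose ambient multiplication is the tensor product $\bm=\otimes$ on the tensor algebra.

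Next, following Lemma \ref{lem: dpois to dgla}, I would set $\{-,-\}:=\bm\cc\db$ on $\bfOm \bfB A$ and establish the Jacobi identity exactly as there, by applying $\bm$ to the identity (\ref{for: jacobi id}), which is valid for any dg double Poisson algebra. This gives the Loday bracket of degree $n$ of Remark \ref{rk: loday}. Passing to the commutator quotient $(\bfOm \bfB A)_{cyc}=\bfOm \bfB A/[\bfOm \bfB A,\bfOm \bfB A]$, with commutators taken for $\bm$, the bracket becomes antisymmetric and the Jacobi identity descends, yielding the dg Lie algebra structure of degree $n$.

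The only point that is not purely formal is the compatibility of $\{-,-\}$ with the total differential $\pa+b+\delta$, i.e.\ that the Loday and Lie structures are dg rather than merely graded. Here both multiplications enter: the bracket $\db$ on $\bfOm \bfB A$ is induced through the Leibniz rule from the double Lie bracket on $\s\bar A\langle t\rangle$ of Proposition \ref{prop: lie on bar}, and that inner bracket obeys the Leibniz rule with respect to the shifted multiplication $\tbm$ (equivalently the free product $\ast$), not with respect to $\bm$. Consequently, to commute $\{-,-\}$ past the differential one needs that both $\bm$ and $\tbm$ commute with each of $\pa$, $b$ and $\delta$: for $\bm$ this is the assertion that $T_S\,\si(\s\bar A\langle t\rangle)$ is a dg algebra, while for $\tbm$ it follows from the constructions of Section \ref{subsec: s-const} (the same sign bookkeeping appearing in Proposition \ref{prop: lie on bar} and Theorem \ref{thm: cob bar has dpoiss}). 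Granting these two commutativities, the argument of Lemma \ref{lem: dpois to dgla} carries over, and I expect the only genuine labor to be the Koszul-sign tracking across the two multiplications and the three differentials --- the real, if routine, obstacle.
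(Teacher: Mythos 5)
Your proposal is correct and follows essentially the same route as the paper, whose proof is precisely the observation that \(\bm\) and \(\tbm\) commute with the differentials \(\pa\), \(b\), \(\delta\), after which the technique of Lemma \ref{lem: dpois to dgla} (applying \(\bm\) to the identity (\ref{for: jacobi id}) and passing to the commutator quotient) yields both the Loday and the Lie structure. Your one refinement-worthy point: since Theorem \ref{thm: cob bar has dpoiss} already asserts the full dg double Poisson axioms --- including compatibility of \(\db\) with \(\pa+b+\delta\), which is where the \(\tbm\)-bookkeeping was absorbed --- the corollary's differential-compatibility in fact needs only that \(\bm=\otimes\) is a chain map, so your flagged ``genuine labor'' is already discharged upstream.
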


\subsection{Application to preprojective algebra}

First, we briefly recall cyclic homology theory. See \cite{Lod1998Cyc} for further details.
Let \(A \in \adga_S\) be a dg algebra.
The Hochschild complex \(\CH(A)_\bullet\) is a model to compute the Hochschild homology of \(A\),
which is the cohomology of  \(A \Lot_{A^e} A\) (under the cohomological grading).
It is obtained by replacing \(A\) with its bar resolution.
It is a mixed complex with respect to the differential \(\pa + b\) on \(\CH(A)_\bullet\) and Connes' \(\sfB\) operator.
The cyclic complex is defined to be
\(\CC(A)_\bullet : = (\CH(A)_\bullet [u^{-1}], \pa + b +u\sfB)\).
Here, \(u\) is of degree \(2\).

On the other hand, the cyclic permutation operator \(\tau\) acts on the Hochschild complex \(\CH(A)\).
The Connes' complex is defined to be \(C^\lambda (A) _\bullet : = (\coker (id - \tau), \pa + b)\), see \cite[Section 2.1.4]{Lod1998Cyc} for details.
Since we work over an algebraically closed field of characteristic zero, cohomologies \(\rmH^\bullet (\CC(A))\) and \(\rmH^\bullet (C^\lambda(A))\) coincide and are called the cyclic homology \(\HC_\bullet(A)\).
See \cite[Theorem 2.1.4, Theorem 2.1.8]{Lod1998Cyc} for details.

Berest--Khachatryan--Ramadoss introduced the functor
\[
\mC:\DGA_S\to\Com_S,\qquad   A\tos\cone(S_{cyc}\to A_{cyc}).
\]
Here the map \(S_{cyc}\to A_{cyc}\) is induced by the unit map of \(A\).
One of the main results in \cite{BKR2013Der} is that the functor \(\mC\) admits a total left derived functor \(\bfL\mC:\Ho(\DGA_S)\to\Ho(\Com_S)\).
Berest et al. proved that \(\bfL \mC ( S \bcs A)\cong\cone(C^\lambda(S) _\bullet\to C^\lambda (A)_\bullet)\).
Here, by \(\ S \bcs A\), we emphasize that \(A\) is a dg algebra over \(S\).
See \cite[Proposition 3.1]{BKR2013Der} for details.
Combining  Berest--Khachatryan--Ramadoss' result with Corollary \ref{cor: cobar bar has loday},
we obtain
\begin{corollary}\label{cor: dpois to lie}
	Let \((A\in\adga_S,\pa,\db)\) be a dg double Poisson algebra of degree \(n\).
	Then the reduced cyclic homology \(\rHC_\bullet(A)\) carries a canonical graded Lie algebra structure of degree \(n\).
\end{corollary}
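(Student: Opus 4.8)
The plan is to realize $\rHC_\bullet(A)$ as the cohomology of the reduced cyclic complex attached to the canonical resolution $\bfOm\bfB A$, and then transport onto it the dg Lie structure that Corollary \ref{cor: cobar bar has loday} already provides on the cyclic quotient.

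First I would fix the model for $\rHC_\bullet(A)$. By Proposition \ref{prop: cobar bar resolution} the cobar--bar construction $\bfOm\bfB A$ is a cofibrant resolution of $A$ in $\adga_S$. Invoking the Feigin--Tsygan theorem in the form of Berest--Khachatryan--Ramadoss, the reduced cyclic homology is computed by applying $\mC$ to any cofibrant resolution: $\bfL\mC(S\bcs A)$ is represented by $\mC(\bfOm\bfB A)=\cone\bigl(S_{cyc}\to(\bfOm\bfB A)_{cyc}\bigr)$. Since the unit $S\to\bfOm\bfB A$ is a split monomorphism of complexes, the induced map $S_{cyc}\to(\bfOm\bfB A)_{cyc}$ is injective, so its cone is quasi-isomorphic to the cokernel $(\bfOm\bfB A)_\natural=\bfOm\bfB A/(S+[\bfOm\bfB A,\bfOm\bfB A])$. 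Hence $\rHC_\bullet(A)\cong\rmH^\bullet\bigl((\bfOm\bfB A)_\natural\bigr)$, in agreement with the Feigin--Tsygan description recalled in the introduction.

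Second I would promote $(\bfOm\bfB A)_\natural$ to a dg Lie algebra. By Corollary \ref{cor: cobar bar has loday} the cyclic quotient $(\bfOm\bfB A)_{cyc}$ is a dg Lie algebra of degree $n$, with bracket obtained by composing the double bracket $\db$ with the (shifted) multiplication. The key point is that the image $\bar S$ of $S$ in $(\bfOm\bfB A)_{cyc}$ is central: since $\db$ is $S$-bilinear and the associated double derivations vanish on $S$, the induced Loday bracket annihilates any argument lying in $S$, so $\{\bar S,-\}=0$. Thus $\bar S$ is a central Lie ideal, the bracket descends to $(\bfOm\bfB A)_\natural=(\bfOm\bfB A)_{cyc}/\bar S$, and the differential $\pa+b+\delta$ acts as a Lie derivation, making $(\bfOm\bfB A)_\natural$ a dg Lie algebra of degree $n$.

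Finally, passing to cohomology turns the dg Lie algebra $(\bfOm\bfB A)_\natural$ into a graded Lie algebra of degree $n$, which under the identification of the first step equips $\rHC_\bullet(A)$ with a graded Lie bracket of degree $n$. Canonicity follows because the double bracket on $\bfOm\bfB A$ is produced by the $\s$-construction from the given bracket on $A$ with no auxiliary choices, while Theorem \ref{thm: dpoiss not reso} supplies independence of the cofibrant resolution. I expect the main obstacle to be the centrality of $\bar S$ — that is, verifying the induced Loday bracket on $(\bfOm\bfB A)_{cyc}$ kills the idempotents — together with checking that the cone-versus-cokernel identification of the first step is compatible with the bracket at the level of cohomology; both amount to bookkeeping with the $S$-bilinearity of $\db$ and the explicit differentials.
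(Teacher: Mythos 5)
Your proposal is correct and follows essentially the paper's own route: the paper obtains the corollary precisely by combining the Berest--Khachatryan--Ramadoss (Feigin--Tsygan) computation of \(\rHC_\bullet(A)\) via the cofibrant resolution \(\bfOm\bfB A\) with the dg Lie structure on the cyclic quotient provided by Corollary \ref{cor: cobar bar has loday}. The only difference is that you spell out details the paper leaves implicit --- the cone-versus-cokernel identification and the centrality of the image of \(S\) (which holds because \(S\)-linear double derivations vanish on \(S\), so the induced bracket kills both slots on \(S\)) --- and these checks are sound.
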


\begin{example}
	Let \(Q\) be a finite quiver.
	The Connes' cyclic complex of its path algebra \(\QQ\) is
	\begin{equation}
		C^\lambda (\QQ)_n  := \QQ^{\otimes (n+1)}/\operatorname{im}(\id-\tau)\ , \quad b_n:\,C^\lambda(\QQ)_n \to C^\lambda (\QQ) _{n-1}\ ,
	\end{equation}
	where \(b_n\) is induced by the standard Hochschild differential and \(\tau\) denotes the cyclic permutation.

	On the other hand, \(0\) as a dg algebra over \(\QQ\) admits a canonical cofibrant resolution: \(\QQ\langle x \rangle\),
	\(|x| = -1 \) and the differential \(\pa\) is given by
	\[
	\pa (p_1 x p_2 \cdots xp_r) = \sum_k \one^{k} p_1 x p_2 \cdots p_kp_{k+1}x p_{k+2}\cdots p_r.
	\]
	It is clear that
	\[
	\bfL \mC(\QQ \bcs 0) \cong \cone(\QQ_{cyc} \to \QQ\langle x \rangle_{cyc}) \cong {\frac{\QQ\langle x\rangle }{\QQ + [\QQ\langle x\rangle ,\QQ\langle x\rangle ]}}\cong C^{\lambda} (\QQ)[1]
	\]
	Using the distinguished triangle in \cite[Lemma 3.2, Theorem 3.3]{BKR2013Der}, one has 
	\(\bfL \mC (\bbK \bcs \QQ) \simeq \cone(C^\lambda (\bbK) _\bullet \to C^\lambda  (\QQ) _\bullet)\).
\end{example}

For a finite quiver \(Q\), Crawley-Boevey, Etingof, Ginzburg and Van den Bergh showed the quotient space $\Pi Q / [\Pi Q, \Pi Q]$ carries a Lie bracket.
Note that this Lie bracket indeed descends to the zeroth reduced cyclic homology $\rHC_0(\Pi Q)$.
This raises the following natural questions: (1) Does there exist a Lie algebra structure on the full reduced cyclic homology $\rHC_\bullet(\Pi Q)$?
(2) How can such a Lie algebra structure be explained at the cochain level?
Since there is no canonical double Poisson bracket on \(\Pi Q\), one cannot deduce a graded Lie algebra structure on \(\rHC_\bullet(\Pi Q)\) directly from Section \ref{sec: s-bracket}.
However, we still have the following result.

\begin{theorem}\label{thm: preproj}
	Let \(Q\) be a finite quiver.
	There is a canonical graded Lie algebra structure on the reduced cyclic homology \(\rHC_\bullet(\Pi Q)\).
\end{theorem}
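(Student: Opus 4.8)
The plan is to realise $\Pi Q$ as a noncommutative Hamiltonian reduction of a \emph{genuine} double Poisson algebra and to transport the resulting structure to the functorial cobar--bar resolution. Recall from Example \ref{eg: quiver double poisson} that the doubled path algebra $\QQ$ carries a canonical double Poisson bracket of degree $0$, and from Example \ref{eg: A_3} that $\bw=\sum_{a\in Q}(aa^\ast-a^\ast a)$ is a noncommutative moment map with $\Pi Q=\QQ_{\bw}$. Applying Theorem \ref{thm: cob bar has dpoiss} to $\QQ$, the cobar--bar construction $\bfOm\bfB\QQ$ is a dg double Poisson algebra of degree $0$, hence, by Corollary \ref{cor: cobar bar has loday}, a dg Loday algebra equipped with the single bracket $\tbm\cc\ldb-,-\rdb$; by Proposition \ref{prop: cobar bar resolution} it is a cofibrant resolution of $\QQ$.

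Next I would descend this structure along the augmented surjection $\pi\colon\QQ\twoheadrightarrow\Pi Q$. Functoriality of bar and cobar gives a dg algebra surjection $\rho:=\bfOm\bfB\pi\colon\bfOm\bfB\QQ\twoheadrightarrow\bfOm\bfB\Pi Q$, and by Proposition \ref{prop: cobar bar resolution} the target is a cofibrant resolution of $\Pi Q$; the Feigin--Tsygan theorem in the form of \cite{BKR2013Der} then identifies $\rHC_\bullet(\Pi Q)\cong\rmH^\bullet\big((\bfOm\bfB\Pi Q)_\natural\big)$. The heart of the argument is that the \emph{Loday} bracket on $\bfOm\bfB\QQ$ passes to $\bfOm\bfB\Pi Q$, i.e.\ that $\ker\rho$ is a two-sided ideal for $\tbm\cc\ldb-,-\rdb$. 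The decisive input is the moment map identity: since $\ldb\bw,p\rdb=p\otimes 1-1\otimes p$, one computes $\{\bw,p\}=\bm\ldb\bw,p\rdb=0$, and dually $\{p,\bw\}=0$, so the image of $\bw$ is bi-Casimir for the single bracket. Because the induced Loday bracket is a derivation in each argument, bracketing with any generator of $\ker\rho$ (which always contains a factor lying in the relation ideal $\QQ\bw\QQ$) again produces an element of $\ker\rho$. This is precisely the cochain-level analogue of the Crawley-Boevey--Etingof--Ginzburg reduction recorded in Proposition \ref{prop: CBEG}, now lifted from $\rHC_0$ to the entire resolution.

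Granting the descent, $\bfOm\bfB\Pi Q$ becomes a dg Loday algebra of degree $0$. Exactly as in Lemma \ref{lem: dpois to dgla} and Corollary \ref{cor: cobar bar has loday}, the induced bracket on the cyclic quotient $(\bfOm\bfB\Pi Q)_\natural$ satisfies the Jacobi identity and is compatible with the total differential $\pa+b+\delta$, so $(\bfOm\bfB\Pi Q)_\natural$ is a dg Lie algebra. Passing to cohomology and using the identification above yields the desired graded Lie algebra structure on $\rHC_\bullet(\Pi Q)$; its restriction to $\rHC_0$ recovers the necklace Lie bracket of Crawley-Boevey \cite{Cra2011}, confirming canonicity.

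The main obstacle is the descent step, and the subtlety is exactly the one flagged before the statement: the double Poisson bracket itself does \emph{not} descend to $\Pi Q$, since $\ldb\bw,p\rdb=p\otimes 1-1\otimes p$ does not lie in $\ker\rho\otimes(\bfOm\bfB\QQ)+(\bfOm\bfB\QQ)\otimes\ker\rho$; this is why $\Pi Q$ is not a double Poisson algebra and why Corollary \ref{cor: dpois to lie} cannot be invoked directly. What survives is the weaker single bracket, precisely because applying $\tbm$ (equivalently $\bm$) annihilates the moment-map obstruction $p\otimes 1-1\otimes p$. The genuinely technical point is therefore to describe $\ker\rho$ on the cobar--bar — it is strictly larger than the ideal generated by the lift of $\bw$, being the cobar--bar of a quotient — and to verify bracket-closedness against it through the free-product Leibniz rule on $\s\bar A\langle t\rangle$ while controlling the interaction with the bar and cobar differentials $b$ and $\delta$. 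I expect this bookkeeping, rather than any conceptual difficulty, to be where the work lies; and it is exactly here that the Dynkin and non-Dynkin cases are handled uniformly, since at no point do we require the derived reduction of $\QQ$ to be quasi-isomorphic to $\Pi Q$ — only that $\bfOm\bfB\Pi Q$ is a cofibrant resolution, which always holds.
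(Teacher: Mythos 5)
Your proposal shares both ingredients with the paper's proof (noncommutative Hamiltonian reduction $\Pi Q=\QQ_\bw$, plus the cobar--bar extension machinery of Section \ref{sec: s-bracket}), but runs them in the opposite order, and the step on which your whole argument pivots contains a genuine gap. You claim that $\ker\rho$, for $\rho=\bfOm\bfB\pi$, is a two-sided ideal for the single bracket $\tbm\cc\db$, justified by ``$\bw$ is bi-Casimir'' together with ``the Loday bracket is a derivation in each argument.'' The second assertion is false: for a double Poisson algebra the bracket $\{-,-\}=\bm\cc\db$ is a derivation only in its \emph{second} argument; in the first argument it obeys the inner-multiplication rule $\ldb fg,h\rdb=f\ast\ldb g,h\rdb+\ldb f,h\rdb\ast g$, which produces terms that are not ideal-shaped. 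Concretely, already in $\QQ$ itself, before any bar or cobar enters, the moment-map identity $\ldb\bw,c\rdb=c\otimes 1-1\otimes c$ gives
\[
\{a\bw b,\,c\}\;=\;\ldb b,c\rdb'\,a\,\bw\,\ldb b,c\rdb''\;+\;\ldb a,c\rdb'\,\bw\,b\,\ldb a,c\rdb''\;+\;cba-bac,
\]
so that $\{\QQ\bw\QQ,\,c\}\equiv[c,ba]\pmod{\QQ\bw\QQ}$, a commutator which in general does not lie in the relation ideal. Thus $\bw$ being Casimir ($\{\bw,c\}=\{c,\bw\}=0$) does \emph{not} make the associative ideal it generates a Loday ideal, the same failure propagates to $\ker\rho$ through the free-product Leibniz rule, and the descent of the Loday structure to $\bfOm\bfB\Pi Q$ does not go through as you state it. This is not ``bookkeeping'': the claim fails at the very first test case.

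What survives is exactly the commutator-weakened statement: since $\{-,c\}$ kills commutators in its first slot ($\{ab,c\}=\{ba,c\}$), the obstruction $[c,ba]$ dies in the cyclic quotient, so the bracket does descend at the level of $(-)_\natural$ — this is the content of Proposition \ref{prop: CBEG} and the pattern of Lemma \ref{lem: nc red com wt ext} — and that is all the theorem needs, because $\rHC_\bullet(\Pi Q)$ is computed from $(\bfOm\bfB\Pi Q)_\natural$ via the Feigin--Tsygan identification. So your proof is repairable by replacing ``$\ker\rho$ is a Loday ideal in $\bfOm\bfB\QQ$'' with ``the image of $\ker\rho$ is a Lie ideal in $(\bfOm\bfB\QQ)_\natural$.'' The paper avoids the kernel analysis entirely by reversing your order of operations: it first descends the induced bracket from $\QQ$ to $\Pi Q$ (Remark \ref{rk: loday}, Proposition \ref{prop: CBEG}), and then extends that single bracket directly to $\bfOm\bfB\Pi Q\cong T_S\si(\s\overline{\Pi Q}\langle t\rangle)$ by running the $\s$-construction and the $t$-extension with the zero double derivation on $\Pi Q$ itself (Proposition \ref{prop: lie on bar}, Theorem \ref{thm: cob bar has dpoiss}), checking anti-symmetry only on the quotient $\big(T_S\si(\s\overline{\Pi Q}\langle t\rangle)\big)_\natural$. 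Your resolve-then-reduce route forces you to describe $\ker(\bfOm\bfB\pi)$, which is precisely where the argument breaks; reduce-then-resolve never meets that kernel. Your closing observations — that the double bracket itself cannot descend, and that no quasi-isomorphism between the derived reduction and $\Pi Q$ is needed, so Dynkin and non-Dynkin quivers are treated uniformly — are correct and match the paper's point of view.
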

\begin{proof}
	The bracket \(\{-,-\}:=\bm\cc\db\) on the path algebra \(\QQ\) descends to a dg Loday bracket of degree zero on \(\Pi Q\) (see Remark \ref{rk: loday}). 
	By Proposition \ref{prop: lie on bar} and Theorem \ref{thm: cob bar has dpoiss}, brackets \(\db\text{ and }\{-,-\}\) extend to the cobar–bar construction \(\bfOm\bfB\QQ \cong T_S\si(\s\overline{\QQ}\langle t\rangle)\); consequently, the induced Loday bracket \(\{-,-\}\) on \(\Pi Q\) extends canonically to \(\bfOm \bfB \Pi Q \cong T_S \si(\s \overline{\Pi Q} \langle t \rangle )\) and is compatible with the differential.
	Anti-symmetry holds for the induced bracket \(\{-,-\}\) on the quotient \(\big(T_S\si(\s\overline{\Pi Q}\langle t\rangle)\big)_\natural\).
\end{proof}
It is straightforward to verify that the above results hold for deformed preprojective algebras, since for any $\br \in S$, the deformed preprojective algebra \(\Pi^\br Q = \QQ/(\bw - \br) \) is also a noncommutative Hamiltonian reduction.

Finally, it is natural to ask whether,
when \(A \in \adga_S\) is endowed with a  shifted double Poisson structure (see Definition \ref{def: shifted dpois}),
there is a homotopy version of  Corollary \ref{cor: dpois to lie}?
In forthcoming work, we present an affirmative answer by proving the following theorem.
\begin{theorem}
	Let \(A \in \adga_S\) and \(n \in \bbZ\).
	 If \(A\) is endowed with an \(n\)-shifted double Poisson structure, then the reduced cyclic homology \(\rHC (A)\) carries a canonical graded Lie algebra structure of degree \(n\).
\end{theorem}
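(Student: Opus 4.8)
The plan is to reduce the statement to two inputs: the passage from a shifted double Poisson structure to a noncommutative $P_\infty$-structure (the preceding proposition), and the elementary fact that an $L_\infty$-structure on a cochain complex descends to a strict graded Lie algebra on its cohomology. Fix once and for all a cofibrant resolution $\pi: Q \cof A$ in $\adga_S$. By definition an $n$-shifted double Poisson structure on $A$ is a Maurer--Cartan element $P \in F^2 \TTs{n+1}_{Q,cyc}[n+1]$; thus $P = \sum_{r \geq 2} P_r$ with $P_r$ of weight $r$, of cohomological degree $1$, satisfying $\pa P + \frac{1}{2}\{P,P\} = 0$. By Proposition \ref{prop: mc for dpois} each weight-$r$ component $P_r$ is the same datum as a $\bbZ_r$-symmetric $r$-ary double bracket $\db^{(r)}$ on $Q$, and the Maurer--Cartan equation packages the whole hierarchy of higher double Jacobi identities relating the $\db^{(r)}$. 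I would record this repackaging first, so that $P$ is literally a \emph{double $P_\infty$-structure} on $Q$.

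Second, I would construct the cyclic trace morphism. Generalizing the assignment $\db \mapsto \bm \cc \db$ of Lemma \ref{lem: dpois to dgla}, composing each $r$-ary double bracket with the multiplications $\bm$ and projecting to the commutator quotient $Q_\natural$ produces an $r$-ary operation $l_r$ on $Q_\natural[n]$. The content of the step is that these assignments assemble into a \emph{strict morphism of dg Lie algebras}
\[
\mu: \TTs{n+1}_{Q,cyc}[n+1] \longrightarrow \mathfrak{g},
\]
where $\mathfrak{g}$ is the convolution (Nijenhuis--Richardson) dg Lie algebra whose Maurer--Cartan elements are exactly the $L_\infty$-structures on $Q_\natural[n]$ that are induced, argument by argument, by derivations of $Q$. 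Concretely one must check that $\mu$ is a chain map and that it intertwines the double Schouten--Nijenhuis bracket $\{-,-\}$ on cyclic polyvector fields with the convolution bracket on $\mathfrak{g}$. For the binary weight this is precisely identity (\ref{for: jacobi id}) together with Lemma \ref{lem: dpois to dgla}; the general case is the arity-graded, cyclically symmetrized version of the same computation, which I would carry out by tracking the Koszul signs produced by the $\s$-shifts as in Lemma \ref{lem: tw jacobi id}. Applying $\mu$ to $P$ then yields a Maurer--Cartan element $\mu(P) \in \mathfrak{g}$, that is, the $n$-shifted NC $P_\infty$-structure of the preceding proposition, realized as an honest $L_\infty$-structure $\{l_r\}_{r \geq 1}$ on $Q_\natural[n]$ with $l_1 = \pa$.

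Third, I would invoke the standard descent of $L_\infty$-structures to cohomology: for any $L_\infty$-algebra $(W, l_1, l_2, l_3, \dots)$ the binary operation $l_2$ is a chain map for $l_1$, and its graded Jacobiator is null-homotopic via $l_3$, so the induced bracket $[\bar x, \bar y] := \overline{l_2(x,y)}$ on $\rmH^\bullet(W, l_1)$ is a genuine graded Lie bracket. Taking $W = Q_\natural[n]$ gives a graded Lie algebra structure on $\rmH^\bullet(Q_\natural)[n]$. Finally, exactly as in Corollary \ref{cor: dpois to lie}, the Feigin--Tsygan theorem in the form of Berest--Khachatryan--Ramadoss \cite{BKR2013Der} identifies $\rmH^\bullet(Q_\natural)$ with the reduced cyclic homology $\rHC_\bullet(A)$; transporting the bracket along this isomorphism and unwinding the shift $[n]$ produces the asserted canonical graded Lie algebra structure of degree $n$ on $\rHC_\bullet(A)$, independent of $Q$ by Theorem \ref{thm: dpoiss not reso}.

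The main obstacle is the second step: verifying that $\mu$ is a \emph{strict} morphism of dg Lie algebras at all arities simultaneously, rather than merely a morphism up to homotopy. The delicate points are the Koszul signs introduced by the cyclic symmetrization and the $\s^n$-shifts, and the need to see that multiplying out and cyclicizing is compatible with the completed weight filtration, so that the image of the pronilpotent $F^2$ lands in a pronilpotent subalgebra of $\mathfrak{g}$ and Maurer--Cartan elements are preserved. A secondary point is to confirm that each $l_r$ genuinely satisfies the derivation constraint in the definition of an NC $P_\infty$-structure; this follows because each $\db^{(r)}$ is built from double derivations, but must be checked to survive the projection to $Q_\natural$.
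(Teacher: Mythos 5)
You should know at the outset that the paper does not actually prove this theorem: it is stated at the very end with the remark that the proof ``will appear in an upcoming work,'' and the same is true of the preceding proposition (that an \(n\)-shifted double Poisson structure induces an \(n\)-shifted NC \(P_\infty\)-structure), which is exactly what your second step manufactures. So there is no proof in the paper to compare against; your outline follows the route the paper signposts, and your steps 1, 3 and 4 are sound with the paper's own tools: the weight decomposition of the Maurer--Cartan equation (weight \(2\): \(\pa P_2=0\); weight \(3\): \(\pa P_3+\tfrac12\{P_2,P_2\}=0\), since the double Schouten--Nijenhuis bracket lowers weight by one), the standard descent of \(L_\infty\)-structures to cohomology, and the Feigin--Tsygan/Berest--Khachatryan--Ramadoss identification \(\rHC_\bullet(A)\cong\rmH^\bullet(Q_\natural)\). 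The genuine gap is your second step, and it is precisely the content the paper defers. Two concrete problems: (i) the strictness of \(\mu\) is asserted, not proved --- you flag it yourself as ``the main obstacle,'' so the proposal reduces the theorem to an unproved lemma of essentially the same depth as the theorem; and (ii), more seriously, as stated the claim that \(\mu\) lands in the Nijenhuis--Richardson convolution algebra is not even well-posed: by Proposition \ref{prop: mc for dpois} a weight-\(r\) element of \(\TTs{n+1}_{Q,\,cyc}\) corresponds to a \(\bbZ_r\)-\emph{cyclically} equivariant \(r\)-ary bracket, so the induced operation on \(Q_\natural\) is only cyclically symmetric. For \(r=2\) cyclic symmetry is full symmetry, which is why Lemma \ref{lem: dpois to dgla} works, but for \(r\ge 3\) transpositions are not cyclic rotations, so your \(l_r\) are not a priori maps \(\wedge^r(Q_\natural[n])\to Q_\natural[n]\) as the definition of an NC \(P_\infty\)-structure requires; you need a graded antisymmetrization step together with a Lada--Markl-type verification that antisymmetrization is compatible with the brackets and with the Maurer--Cartan equation, and your proposal never mentions this. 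A smaller caveat of the same kind: Proposition \ref{prop: mc for dpois} is proved under a smoothness hypothesis (\(\KA(A)\) finitely presented projective) which an infinitely generated semi-free resolution \(Q\) need not satisfy literally, yet you invoke the bijection at every weight.

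A constructive remark: for the theorem as stated (a graded Lie bracket on \(\rHC_\bullet(A)\), not a full \(L_\infty\)-structure) you can bypass both difficulties, since only the weight-\(2\) and weight-\(3\) components of \(P\) matter. The identity \(\pa P_2=0\) makes \(l_2=\bm\cc\db_{P_2}\) a chain map on \(Q_\natural\), skew-symmetric on the quotient by the \(\bbZ_2\)-equivariance exactly as in Lemma \ref{lem: dpois to dgla}; the weight-\(3\) Maurer--Cartan identity, combined with Van den Bergh's identification of the triple bracket associated to \(\tfrac12\{P_2,P_2\}\) with the double Jacobiator of \(\db_{P_2}\) (the homotopy version of identity (\ref{for: jacobi id})), shows that the Jacobiator of \(l_2\) on \(Q_\natural\) is the operation induced by \(-\pa P_3\), hence null-homotopic, so Jacobi holds on \(\rmH^\bullet(Q_\natural)\cong\rHC_\bullet(A)\); no symmetry of the ternary homotopy is needed. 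Finally, note that ``canonical'' requires more than your citation of Theorem \ref{thm: dpoiss not reso}: that theorem gives a weak equivalence of Maurer--Cartan spaces, but to transport the bracket you must also check that the comparison maps \(\widetilde F,\widetilde G\) intertwine the induced operations on the commutator quotients, which is plausible from their explicit form but is an additional verification, not a consequence of the theorem as stated.
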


\bibliographystyle{plain}
\bibliography{bib_noncomquantred}

\end{document}